\DeclareMathOperator{\dcl}{dcl}
 \DeclareMathOperator{\dom}{dom}
\newtheorem*{nthm}{Theorem}
\newtheorem{theorem}{Theorem}[section]
\newtheorem{claim}[theorem]{Claim}
\newtheorem{cor}[theorem]{Corollary}
\newtheorem{fact}[theorem]{Fact}
\newtheorem{lemma}[theorem]{Lemma}
\newtheorem{proposition}[theorem]{Proposition}
\theoremstyle{definition}
\newtheorem{definition}[theorem]{Definition}
\newtheorem{defnn}[theorem]{Definition}
\newtheorem{example}[theorem]{Example}
\newtheorem{remark}[theorem]{Remark}
\newtheorem*{remark*}{Remark}
\newcommand{\Qq}{{\mathbb{Q}}}
\newcommand{\sub}{\subseteq}
\newcommand{\dpr}{\mbox{dp-rk}}
\newcommand{\al}{\alpha}
\newcommand{\CL}{{\mathcal L}}
\newcommand{\CN}{{\mathcal N}}
\newcommand{\CM}{{\mathcal M}}
\newcommand{\CI}{\mathcal{I}}
\newcommand{\CF}{{\mathcal F}}
\newcommand{\0}{\emptyset}
\newcommand{\fleq}{\preccurlyeq}
\renewcommand{\phi}{\varphi}
\long\def\symbolfootnote[#1]#2{\begingroup%
\def\thefootnote{\fnsymbol{footnote}}\footnote[#1]{#2}\endgroup}
\def\Ind#1#2{#1\setbox0=\hbox{$#1x$}\kern\wd0\hbox to 0pt{\hss$#1\mid$\hss}
\lower.9\ht0\hbox to 0pt{\hss$#1\smile$\hss}\kern\wd0}
\def\Notind#1#2{#1\setbox0=\hbox{$#1x$}\kern\wd0\hbox to 0pt{\mathchardef
\nn=12854\hss$#1\nn$\kern1.4\wd0\hss}\hbox to
0pt{\hss$#1\mid$\hss}\lower.9\ht0 \hbox to
0pt{\hss$#1\smile$\hss}\kern\wd0}
\def\la{\langle}
\def\ra{\rangle}
\title{Fields interpretable in real closed valued fields and some expansions}
\date{\today}
\author{Assaf Hasson}
\address{Department of Mathematics, Ben Gurion University of the Negev, Be'er-Sheva 84105, Israel}
\email{hassonas@math.bgu.ac.il}
\author{Ya'acov Peterzil}
\address{Department of Mathematics, University of Haifa, Haifa, Israel}
\email{kobi@math.haifa.ac.il}
\thanks{The second author was partially supported by Israel Science Foundation grant number 290/19 }
\begin{document}

	\begin{abstract}
		Let $\CM=\la K;O\ra$ be a real closed valued field and let $k$ be its residue field. We prove that every interpretable field in  $\CM$ is definably isomorphic to either $K$, $K(\sqrt{-1})$, $k$, or $k(\sqrt{-1})$.	The same result holds when $K$ is a model of $T$, for  $T$
		a power bounded
		 o-minimal expansion of a real closed field,  and $O$ is a $T$-convex subring.
		
		The proof is direct and does not make use of known results about elimination of imaginaries in valued fields.
	\end{abstract}
	
		\maketitle	

\section{Introduction }
The goal of this work is to classify fields interpretable (i.e. fields which are given as definable quotients) in real closed valued fields or, more generally, in expansions of
power bounded
o-minimal structures by a proper $T$-convex subring.  This type of  classification of definable quotients originates in Poizat's influential model theoretic consideration  of the Borel-Tits theorem, \cite{PoiFields}
A similar study of interepretable groups and fields, in the setting of (pure) algebraically closed valued fields, was carried out By Hrushovski and Rideau-Kikuchi in \cite{HrRid} using different techniques. \\

We prove (see Section \ref{sec T-convex} for the definition of $T_{conv}$):
\begin{nthm}\label{main}
	 Let $K$ be a real closed valued field, or more generally a model of $T_{conv}$, where $T$ is an o-minimal
	 power bounded
	 expansion of a real closed field. Let $k$ be its residue field.  If $\CF$ is a field  interpretable in $K$ then $\CF$ is definably isomorphic to either $K$, $K(\sqrt{-1})$, $k$, or $k(\sqrt{-1}))$.
\end{nthm}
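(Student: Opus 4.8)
Work in a monster model $\CM\models T_{conv}$ with home sort $K$, and recall the background we may invoke: $T_{conv}$ is weakly o-minimal; the residue field $k$, with its full induced structure, is a model of $T$, hence an o-minimal expansion of a real closed field; and, because $T$ is power bounded, the value group $\Gamma=K^{\times}/O^{\times}$ carries only the structure of an ordered vector space over the field of exponents of $T$, so $\Gamma$ is o-minimal, ``linear'', interprets no infinite field, and is orthogonal to $k$. We will also use, twice, the classification of fields interpretable in an o-minimal expansion of a real closed field $R$---every such field is definably isomorphic to $R$ or to $R(\sqrt{-1})$ (Pillay; Otero--Peterzil--Pillay). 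Write the given field as a definable quotient $\CF=D/E$, with $D\subseteq K^{n}$ definable and $E$ a definable equivalence relation on $D$, and fix a small set of parameters over which $D$, $E$ and the field operations are defined.

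I would first equip interpretable sets in $\CM$ with a dimension, taking $\zd$ on definable subsets of $K^{m}$ from weak o-minimality and setting $\zd(D/E)=\zd D-d$ with $d$ the dimension of a generic $E$-class; one checks additivity and well-definedness. Then I analyze a generic $e=\pi(a)\in\CF$ (with $\pi\colon D\to\CF$ the quotient map) via a $T$-convex cell decomposition of $D$ and the resulting description of how the $E$-classes sit inside $D$. Here the field structure on $\CF$ discards any genuine value-group contribution: since an infinite field interpretable over an orthogonal pair of structures is interpretable in one of the factors, and $\Gamma$ interprets no infinite field while being orthogonal to both $k$ and $K$, no $\Gamma$-component can occur. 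This should leave two cases: either $E$ is, after a definable bijection and on a definable ``large'' subset of $D$, induced from the o-minimal structure on $K$; or $E$ refines generically along the residue/valuation direction and $\CF$ is internal to $k$.

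In the residual case, stable embeddedness of $k$ (whose induced structure is a model of $T$) upgrades ``$\CF$ internal to $k$'' to ``$\CF$ definably isomorphic to a field interpretable in the o-minimal structure $k$'', and the classification above gives $\CF\cong k$ or $\CF\cong k(\sqrt{-1})$. In the other case $\CF$ is in definable bijection with a definable $D'\subseteq K^{m}$ on which the field operations are maps definable in $\CM$; the goal is to promote these to operations definable already in the pure o-minimal reduct $K$, i.e.\ to show $\CF$ is interpretable in $K$, after which the same classification yields $\CF\cong K$ or $\CF\cong K(\sqrt{-1})$. Here one argues directly: near a generic point the operations are continuous for the order topology on $K$, and using definable Skolem functions in the o-minimal $K$ together with a local analysis controlled by weak o-minimality, one removes the apparent dependence on the subring $O$ from both the operations and the defining equivalence relation.

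The main obstacle is exactly this last descent---showing that a ``$K$-governed'' interpretable field genuinely descends to the o-minimal reduct, without appealing to elimination of imaginaries for $T_{conv}$. Weak o-minimality gives good control of one-variable definable families, but propagating this to the $n$-variable equivalence relation $E$ on $D$, and certifying that the valuation plays no essential role in the quotient, is where the real work lies; it is the rigidity coming from the field axioms on $\CF$ that ultimately forces the descent.
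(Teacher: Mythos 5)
Your proposal misses a genuine case and leaves the two hardest steps as admitted gaps, so it does not constitute a proof.

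The most concrete omission is the sort $K/O$. When one reduces $\CF$ to a one-dimensional piece $J/E$ with $E$ a convex equivalence relation on an interval $J\subseteq K$, there are \emph{four} places the quotient can land after further shrinking: $K$, $k$, $\Gamma$, and $K/O$ (the ``closed-ball'' sort). Your dichotomy only contemplates a $K$-governed and a $k$-governed case after discarding a ``$\Gamma$-component,'' and silently forgets $K/O$ altogether. This is not a cosmetic omission: the paper shows (Proposition~\ref{k and K/O}) that $K/O$ \emph{does} interpret the field $k$, so the orthogonality/``interprets no infinite field'' heuristic you invoke for $\Gamma$ cannot possibly rule out $K/O$. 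Eliminating $K/O$ requires the entire Section~5 machinery (showing that $\CL_{conv}$-definable functions which descend to $K/O$ are locally affine, Lemma~\ref{K on O 3} and Corollary~\ref{K on O- functions}) together with Proposition~\ref{Not K/O}, which uses the specific field identity $F(x,y,z)=(x-b)y+z$ and the injectivity of multiplication to force a contradiction with local affineness. You would need to reproduce something like this, and nothing in your plan does.

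Even for $\Gamma$, your argument as stated has a gap: you assert that a field interpretable ``over an orthogonal pair of structures'' is interpretable in one factor and that $\Gamma$ is orthogonal to both $k$ and $K$, but you never explain why $\CF$ would be interpretable over such a pair in the first place, nor do you justify the orthogonality claim inside $T_{conv}$. The paper instead shows directly that a one-dimensional piece of $\CF$ cannot embed into $\Gamma$ (it is subsumed in Proposition~\ref{Not K/O}, using that in the power bounded case $\Gamma$ is an o-minimal vector space and in the exponential case $\Gamma\cong K/O$), and this argument again rests on the affine normal form plus the field axioms, not on abstract orthogonality.

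Finally, your descent step in the $K$-governed case is explicitly flagged as ``where the real work lies,'' which is honest but leaves the proof incomplete. The paper's route to the descent is quite specific and you would not get it from local continuity alone: one fixes a strongly $I$-internal $Y$ of maximal dimension, constructs the type-definable additive subgroup $\nu_{\CI}$ of $(\CF,+)$ via infinitesimal neighborhoods (Lemma~\ref{infinitesimal}), shows it is stable under multiplication by $\CF$ (Proposition~\ref{subring}), and deduces the global presentation $\CF=Y\cdot(Y\setminus\{0\})^{-1}$. Only then does a Ma\v{r}\'{\i}kov\'a-style local argument (Lemma~\ref{jana}, Claim~\ref{last}, and the computations of $M_+$ and $M_\bullet$) let one replace $\CL_{conv}$-definable operations by $\CL$-definable ones on an $\CL$-definable set of representatives. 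The algebraic identity $\CF=Y\cdot(Y\setminus\{0\})^{-1}$ is the key that turns a local statement into a global one, and your outline has no substitute for it.
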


This result extends a similar theorem of Bays and the second author,   \cite{BaysPet}, proved for   {\em definable} fields in real closed valued fields. The analogous problem of definable fields in  $\Qq_p$ was addressed in \cite{PilQp}, but its generalisation to interpretable fields remains open.


The study of imaginaries in valued fields was suggested by Holly in \cite{HolEOI1} and first studied in depth and in full generality, in the setting of algebraically closed valued fields, by Haskell, Hrushovski and Macpherson in \cite{HaHrMac1} and \cite{HaHrMac2}. Elimination of imaginaries in real closed valued fields analogous to that of \cite{HaHrMac1} was proved by Mellor in \cite{Mellor}. Some results analogous to  \cite{HaHrMac2} for real closed valued fields were obtained by Ealy, Haskell and Marikova in  \cite{EHM}.

The work in \cite{HrRid} uses the theorems on elimination of imaginaries, stable domination and other structural results on algebraically closed valued fields, and accomplishes  considerably more than the classification of interpretable fields. We adopt a different approach circumventing elimination of imaginaries, and avoiding almost completely the so called geometric sorts.
In fact, our main result covers  expansions of real closed valued fields by analytic functions where no elimination of imaginaries results are currently available  (see \cite{HaHrMac3}). Our proof is based on the analysis of one dimensional (equivalently dp-minimal) subsets of the interpreted field $\CF$, and as such it borrows ideas from Johnson's work on fields of finite dp-rank (see for example \cite{johnsondpfiniteI} and \cite{johnsondpfiniteII}), as well as \cite{PePiSt}.

The outline of the proof is as follows: We identify $\CF$ with $X/E$ for a definable $X\sub K^n$  and a definable equivalence relation $E$. We then find a one-dimensional definable set $J\sub X$ intersecting infinitely many $E$-classes. After possibly shrinking $J$ we endow it with the structure of a weakly o-minimal structure $\CI$. Its universe $I=J/E$  is  our basic building block (see \cite{johnsondpfiniteI} for the similar notion of {\em a quasi-minimal sets} or \cite{PePiSt} for the notion of $\mathbb G$-minimal set). Next, we show that, after possibly shrinking $J$ further,  $\CI$ can be definably embedded in one of four weakly o-minimal structures: $\la K;<\ra$, $\la k;<\ra$, $\la K/O;<\ra$ or $\la \Gamma;<\ra$ (the value group).
By analysing definable functions in $K/O$ and showing that they are locally ``affine'' with respect to the additive structure of $K/O$, we eliminate the possibility that $I$ could be embedded into $K/O$. As for $\Gamma$, we consider two cases:
 If $T$ is power-bounded then the o-minimal $K$-induced structure on $\Gamma$ cannot interpret a field (see \cite{vdDriesTconv}), and in fact we show that $I$ cannot be embedded in $\Gamma$. In the exponential case we show that $\la \Gamma,+\ra $ is definably isomorphic to $\la K/O,+\ra$ and therefore, by the above, $I$ cannot be embedded into $\Gamma$. Thus, regardless of whether $T$ is power bounded or exponential, we are left with the first two possibilities, of $K$ and $k$. Using the notion of infinitesimals we prove
in these two cases that the field is definably isomorphic to a definable field in $K_{conv}$ or in $k$. The final result follows from the work on definable fields in o-minimal structures (\cite{OtPePi}).

We remark that given the o-minimal expansion of $K$ in a signature $\CL$, as in the statement of Theorem \ref{main}, it  suffices to prove the result for some $\la K;O\ra \prec \la K';O\ra $. Therefore, throughout, \textbf{we tacitly assume that $\la K;O\ra $ is $(|\CL|+\aleph_0)^+$-saturated} as will be all structures considered below.

\vspace{.3cm}

As a corollary to our main theorem, and using the work of Hempel and Palacin \cite{Hempel-Palacin}, we obtain a theorem about definable division rings:
\begin{cor} Let $K$ be a real closed valued field, or more generally a model of $T_{conv}$, where $T$ is an o-minimal
power bounded
expansion of a real closed field.   If $D$ is a division ring interpretable in $K$ then $D$ is definably isomorphic to either $K$, $K(\sqrt{-1})$, or the quaternions over $K$, or to  $k$,  $k(\sqrt{-1}))$, or the quaternions over $k$.
\end{cor}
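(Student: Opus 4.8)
The plan is to control the center of $D$ via Theorem \ref{main} and to use the work of Hempel and Palacin to make $D$ finite-dimensional over that center. Let $D$ be an infinite division ring interpretable in $K$ (finite division rings are finite fields by Wedderburn's theorem and, as in Theorem \ref{main}, are understood to be excluded from consideration). The center $Z(D)$ is a field and is a relatively definable subset of $D$, hence is again interpretable in $K$; so by Theorem \ref{main} there is a definable field isomorphism $\iota\colon Z(D)\to F_0$ with $F_0\in\{K,\,K(\sqrt{-1}),\,k,\,k(\sqrt{-1})\}$. If $D$ is commutative then $D=Z(D)$ and we are done, so from now on assume $D$ is non-commutative.

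Next I would invoke Hempel--Palacin. Since $\langle K;O\rangle$ is weakly o-minimal it is dp-minimal, and dp-rank is sub-additive and cannot increase along a definable surjection, so $D$ has finite dp-rank; by \cite{Hempel-Palacin} an infinite division ring of finite dp-rank (equivalently, of finite burden) is a finite-dimensional algebra over its center. Put $n=\dim_{Z(D)}D<\infty$, fix a $Z(D)$-basis $e_1,\dots,e_n$ of $D$ with $e_1=1$, and write $e_ie_j=\sum_k c_{ijk}e_k$ with $c_{ijk}\in Z(D)$. Then the coordinate map $D\to Z(D)^n$ is a definable ring isomorphism onto $\bigl(Z(D)^n,+,\star\bigr)$, where $\star$ is the bilinear multiplication determined by the $c_{ijk}$, and applying $\iota$ coordinatewise yields a definable ring isomorphism of $D$ onto a finite-dimensional $F_0$-algebra $A=\bigl(F_0^n,+,\star'\bigr)$ whose center is the scalar copy of $F_0$ (as $\iota$ is a field isomorphism). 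As $F_0$ is definable in $K$ when $F_0\in\{K,K(\sqrt{-1})\}$ and interpretable in $K$ when $F_0\in\{k,k(\sqrt{-1})\}$, the algebra $A$ is in every case interpretable in $K$, and it is a central division algebra over $F_0$.

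It remains to classify $A$. If $F_0$ is $K(\sqrt{-1})$ or $k(\sqrt{-1})$ then it is algebraically closed, its Brauer group is trivial, so $n=1$, contradicting non-commutativity; hence $F_0$ is $K$ or $k$. Both are real closed fields --- for models of $T_{conv}$ with $T$ power bounded the residue field $k$ is a model of $T$, in particular real closed --- and over a real closed field $R$ the norm group of $R(\sqrt{-1})/R$ is $R^{>0}=(R^\times)^2$, whence $\mathrm{Br}(R)\cong R^\times/(R^\times)^2\cong\Zz/2\Zz$, generated by the quaternion algebra. Therefore the only non-commutative finite-dimensional central division algebra over $R$ is the quaternion algebra $\mathbb{H}(R)$, so $A\cong\mathbb{H}(F_0)$; and any $F_0$-algebra isomorphism $A\to\mathbb{H}(F_0)$ is $F_0$-linear, hence given by a single matrix in $\gl_n(F_0)$ and so definable. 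Composing all these definable maps shows $D$ is definably isomorphic to $\mathbb{H}(K)$ or to $\mathbb{H}(k)$, as required.

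The one step that is not entirely routine is checking that a division ring interpretable in $K$ does satisfy the hypotheses of the Hempel--Palacin theorem; as indicated this reduces to weak o-minimality of $\langle K;O\rangle$, sub-additivity of dp-rank, and the fact that $K$ (hence anything interpretable in it) has characteristic zero. Everything else is the classical structure theory of finite-dimensional division algebras over algebraically and real closed fields together with the rigidity of finite-dimensional algebras --- it is this rigidity that makes each isomorphism occurring in the argument automatically definable.
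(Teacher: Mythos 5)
Your proof is correct and follows the same skeleton as the paper's: Hempel--Palacin to show $D$ is finite-dimensional over its center, Theorem \ref{main} to identify the center with $F_0\in\{K,K(\sqrt{-1}),k,k(\sqrt{-1})\}$, and a transfer of structure constants to realize $D$ as an $F_0$-algebra $A$. Where you diverge is the last step: the paper simply cites \cite[Theorem 1.1]{OtPePi} on division rings definable in o-minimal expansions of real closed fields, while you classify $A$ directly by computing the Brauer group of $F_0$ (trivial over the algebraically closed $K(\sqrt{-1}),k(\sqrt{-1})$, and $\Zz/2\Zz$ generated by the quaternions over the real closed $K$ and $k$, the latter being real closed since it is a model of $T$) and then observing that an $F_0$-algebra isomorphism is automatically $F_0$-linear, hence given by a fixed matrix and so definable. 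This is a legitimate alternative: it is more self-contained and makes visible the classical algebra underlying the classification, at the cost of writing out the structure-constants reduction that the paper leaves implicit in the phrase ``$D$, as a finite extension, is definably isomorphic to a division ring definable in these.'' The two routes converge exactly, and either is acceptable.
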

\proof By \cite[Theorem 2.9]{Hempel-Palacin}, $D$ is a finite extension of its center,  the field $\CF$. By the theorem above, $\CF$ is definably isomorphic to a field which is definable in $\la K;+,\cdot\ra$, or  in $\la k;+,\cdot\ra $. Thus $D$, as a finite extension, is definably isomorphic to a division ring definable in these. The result follows from \cite[Theorem 1.1]{OtPePi}
on division rings in o-minimal structures.\qed\\

\begin{remark*}
\begin{enumerate}
\item Weakly o-minimal structures are dp-minimal (namely, have dp-rank $1$), thus if $\CF$ is an interpretable field in a real closed valued field (or in a model of $T_{conv}$) then $\dpr(\CF)<\omega $. It is therefore possible that the algebraic classification of $\CF$, into a real or an algebraically closed field of characteristic $0$, follows from the seminal work of Johnson on fields of finite dp-rank (see for example \cite{johnsondpfiniteVI}). Note however that our theorem gives the additional connection between $\CF$ and the fields $K$ and $k$.

\item A weakly o-minimal theory is distal \cite[Chapter 9]{SiBook}, as is  $T^{eq}$ (see Exercise 9.12 there). Thus an interpretable field in $T_{conv}$ is definable in a distal structure. As was pointed out by Chernikov, such fields cannot have characteristic $p>0$ due to the combinatorial regularity in \cite{CherStarch}. So our field $\CF$ must have characteristic zero.

However, our arguments do not make use of either of the above algebraic facts.

\item As the two comments above suggest, there are various model theoretic frameworks which fit our setting. The field $\CF$ has finite dp-rank. It is also definable in a distal structure. Finally, the structure $\la K;O\ra $, as well as the induced structures on $K/O$, $k$ and $\Gamma$ are al weakly o-minimal expansions of a groups thus they are all uniform dp-minimal structures in the sense of Simon and Walsberg,  \cite{SimWal}. While no direct application of their paper appears in this final version we found the results there to be very helpful during our work on this project.
\item We expect that a similar direct proof could be given for the theorem of Hrushovski and Rideau-Kikuchi, on interpretable fields in algebraically closed valued fields.
    
    \item An earlier version of the article claimed the result, wrongly, for $T$ the theory of an arbitrary o-minimal expansion of a real closed field. We thank Yatir Halevi for noticing the gap.
    \end{enumerate}
\end{remark*}

%
%
%
%
%
%

\subsection{Acknowledgments} 

 A second article, in which similar methods are applied to analyze interpretable fields in P-minimal fields, was written jointly with Yatir Halevi, see \cite{Pminimal}, and we plan to combine both articles into one joint paper.

We thank the model theory group in D{\"u}ssseldorf for their careful reading of the current article. Their helpful comments will be incorporated into the  final version of the paper.
 
\section{Dimension in weakly o-minimal theories}	
	
{\em Throughout this section $\mathcal M$ denotes a model of a weakly o-minimal theory.
	.}

\vspace{.2cm}


We first collect some useful facts concerning dimension in weakly o-minimal structures.
The set $M$ is equipped with the order topology and $M^n$ with the product topology. By {\em an open box} in $M^n$ we mean a cartesian product of convex subsets of $M$ (since $M$ is weakly o-minimal the end points might not be in $M$).

We recall that if $\mathcal M$ is a weakly o-minimal structure and $S\sub M^r$ is a definable set then  $\dim(S)=l$ if $l$ is the maximal natural number  such that the projection of $S$ on $l$ of the coordinates has non empty interior.

 We remind that definable functions in weakly o-minimal structures may be locally constant without being piecewise continuous, so that $\dcl$ need not, in general, satisfy the exchange principle. Though $\dcl$ satisfies exchange in the main sorts of $T_{conv}$ (because it coincides with $\dcl_{\CL}$), it is not the case for  $K/O$. \\	

 The following  can be inferred readily from \cite[Theorem 3.8]{Simdpr} and the subsequent comment.\\
\begin{fact} \label{dp=dim}If  $Y\sub M^n$ is definable then $\dpr(Y)=\dim(Y)$.\end{fact}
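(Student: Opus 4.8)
The plan is to establish the two inequalities $\dim(Y)\le\dpr(Y)$ and $\dpr(Y)\le\dim(Y)$ separately, combining the standard behaviour of dp-rank under definable maps (monotonicity under projections, invariance under finite-to-one maps, subadditivity) with the dimension theory of weakly o-minimal structures and with \cite[Theorem 3.8]{Simdpr} and the comment following it, which carry the real weight. As a preliminary observation, since $\CM$ is weakly o-minimal it is dp-minimal, so $\dpr(M)=1$, and hence by subadditivity of dp-rank $\dpr(M^l)\le l$ for every $l$.

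For $\dim(Y)\le\dpr(Y)$, set $l=\dim(Y)$. By the definition of $\dim$ recalled above, the projection $\pi(Y)$ of $Y$ to some $l$ of the coordinates has nonempty interior, hence contains an open box $B=I_1\times\cdots\times I_l$ with each $I_j$ an infinite definable convex subset of $M$. A coordinate projection cannot raise dp-rank, so $\dpr(Y)\ge\dpr(\pi(Y))\ge\dpr(B)$, and it suffices to see $\dpr(B)\ge l$. This is witnessed by an ict-pattern of depth $l$ inside $B$: for each $j\le l$ pick a strictly increasing sequence $(a^j_m)_{m<\omega}$ in $I_j$, let the $j$-th row consist of the formulas $a^j_{2m}<x_j<a^j_{2m+1}$ (pairwise disjoint intervals), and arrange the $l$ sequences to be mutually indiscernible; for any $\eta\colon l\to\omega$ one may then choose, coordinate by coordinate, $x_j$ in the $\eta(j)$-th of these intervals, showing consistency. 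Hence $\dpr(B)\ge l$, and combined with $\dpr(B)\le\dpr(M^l)\le l$ we get $\dpr(B)=l$.

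For the reverse inequality $\dpr(Y)\le\dim(Y)$, one argues by induction on $\dim(Y)$ using the dimension theory of weakly o-minimal structures (in the style of Macpherson--Marker--Steinhorn): if $\dim(Y)=l$ then some coordinate projection $p\colon Y\to M^l$ is finite-to-one off a definable subset $Y_0\subseteq Y$ of dimension $<l$. Since dp-rank of a definable union is the maximum of the dp-ranks, $\dpr(Y)=\max\{\dpr(Y\setminus Y_0),\dpr(Y_0)\}$; the first is at most $\dpr(p(Y\setminus Y_0))\le\dpr(M^l)\le l$ because finite-to-one definable maps preserve dp-rank, and the second is at most $\dim(Y_0)<l$ by induction. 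This packaging of the finite-to-one projection together with the fact that it is dp-rank preserving is precisely what \cite[Theorem 3.8]{Simdpr} and the subsequent comment provide, so that the assertion follows once one matches Simon's notion of dimension with the projection-characterisation used here, which is immediate. The main obstacle, and the place where the cited theorem is genuinely needed, is this upper bound: controlling the dp-rank of an \emph{arbitrary} definable set by its topological dimension; the lower bound above is soft. Finally, since both $\dpr$ and $\dim$ are invariants of the theory, the ambient saturation assumption causes no difficulty.
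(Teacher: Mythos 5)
The paper offers no argument here beyond the citation of Simon's Theorem 3.8 and the remark following it, so there is no internal proof to compare your attempt against; what you have done is reconstruct one, which is the right thing to do. The overall two-sided strategy is sound: the lower bound via an ict-pattern inside a box, the upper bound via cell decomposition together with the elementary behaviour of dp-rank under projections, finite-to-one maps, and finite unions. Two remarks. First, a harmless simplification: for the lower bound you may take $\phi_j(x;a)$ to be $x_j=a$ rather than an open interval; distinct elements of $I_j$ then give the required row-inconsistency without any appeal to density of the order. Second, and more substantive: the intermediate claim that if $\dim(Y)=l$ then \emph{some single} coordinate projection $p:Y\to M^l$ is finite-to-one off a definable $Y_0\subseteq Y$ with $\dim(Y_0)<l$ is false in general. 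For instance $Y=(M\times\{0\}\times M)\cup(\{0\}\times M\times M)\subseteq M^3$ has $\dim(Y)=2$, yet each of the three coordinate projections to $M^2$ has infinite fibres over a subset of $Y$ of dimension $2$. What cell decomposition (MacMaSt, Theorem 4.11) actually gives is a finite partition $Y=C_1\cup\cdots\cup C_m$ in which each $C_i$ projects \emph{injectively} (indeed homeomorphically) onto an open subset of some $M^{k_i}$ with $k_i\le l$; since dp-rank of a finite union is the maximum over the pieces, $\dpr(Y)=\max_i\dpr(C_i)\le\max_i k_i\le l$, and no induction is needed. As written your induction step does not go through, but the intended argument is repaired immediately by working cell by cell rather than with a single global projection.
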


	For $a$ a tuple in $M^n$ and $A\sub M$ we let $\dim(a/A)$
	be the minimal dimension of an $A$-definable subset of $
	M^n$ containing $a$. Given $X\sub M^n$ definable over $A$, we say that $a$ is {\em generic in $X$ over $A$} if $a\in X$ and $\dim(a/A)=\dim(X)$.
 Our standing saturation assumption assures that generics over small parameter sets always exist. \\
%
	\begin{cor} \label{dp=dim-types}If $a\in M^n$ and $A\sub M$ then $\dpr(a/A)=\dim(a/A)$.\end{cor}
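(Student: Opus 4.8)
The plan is to deduce the statement about types, $\dpr(a/A) = \dim(a/A)$, from the already-established statement about definable sets, Fact \ref{dp=dim}, which says $\dpr(Y) = \dim(Y)$ for definable $Y$. The two sides should be matched by bracketing $\dpr(a/A)$ between $\dpr(Y)$ and $\dim(Y)$ for a suitable $A$-definable $Y$. For the inequality $\dpr(a/A) \le \dim(a/A)$: let $Y$ be an $A$-definable subset of $M^n$ containing $a$ of minimal dimension, so $\dim(Y) = \dim(a/A)$. Then $a \in Y$ gives $\dpr(a/A) \le \dpr(Y)$, since the dp-rank of a type is at most the dp-rank of any $A$-definable set containing it (any mutually indicating array for the type restricted to a formula isolating membership in $Y$ is an array over the set $Y$). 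By Fact \ref{dp=dim}, $\dpr(Y) = \dim(Y) = \dim(a/A)$, giving one direction.

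For the reverse inequality $\dim(a/A) \le \dpr(a/A)$: here I would use our standing saturation assumption together with the sub-additivity/ compactness behaviour of dp-rank. Suppose $\dpr(a/A) = k$; I want to find an $A$-definable $Y \ni a$ with $\dim(Y) \le k$. By definition of dp-rank of a type, for every $A$-indiscernible sequence array of length $k+1$ there is a row that stays indiscernible; by a standard compactness argument (dp-rank of a type is the sup over formulas in the type of the dp-rank of that formula, or rather: if every $A$-formula $\varphi(x) \in \tp(a/A)$ had $\dpr(\varphi) > k$ we could build a witnessing array over the whole type, contradicting $\dpr(a/A) = k$) there is a single $\varphi(x) \in \tp(a/A)$ with $\dpr(\varphi) \le k$. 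Take $Y = \varphi(M^n)$; then $a \in Y$, $Y$ is $A$-definable, and by Fact \ref{dp=dim}, $\dim(Y) = \dpr(Y) \le k = \dpr(a/A)$. Since $\dim(a/A) \le \dim(Y)$, we are done.

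The main obstacle — really the only nontrivial point — is the compactness step producing a single formula $\varphi \in \tp(a/A)$ realizing (an upper bound for) the dp-rank of the type. This is the standard fact that dp-rank "concentrates on a formula" in a sufficiently saturated model, but one must be slightly careful because we are working in a weakly o-minimal (hence NIP, unstable) setting and with possibly infinite-rank ambient theory; the cleanest route is to invoke that $\dpr$ is witnessed by finite sub-configurations together with the $(|\CL| + \aleph_0)^+$-saturation hypothesis stated in the introduction, which guarantees both that the relevant arrays can be found and that the type is finitely approximated by formulas of the right rank. Everything else is a direct monotonicity-of-dp-rank observation combined with Fact \ref{dp=dim}.
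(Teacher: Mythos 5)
Your bracketing strategy---deducing the equality of type-ranks from the equality of set-ranks in Fact~\ref{dp=dim}---is exactly the paper's approach, and the direction $\dpr(a/A)\le\dim(a/A)$ is fine (monotonicity of dp-rank plus Fact~\ref{dp=dim}). The gap is in the converse direction. What you present as a ``standard compactness argument'' is not actually a compactness argument: an ict-pattern witnessing $\dpr(\varphi)>k$ comes with its own choice of pattern formulas $\psi_1,\dots,\psi_{k+1}$, and these may vary with $\varphi\in\tp(a/A)$. You cannot simply run compactness over the type, because there is no single first-order condition expressing ``some ict-pattern of depth $k+1$ for $\tp(a/A)$ exists'' until a fixed tuple $(\psi_1,\dots,\psi_{k+1})$ has been chosen, and producing one tuple that works uniformly across the whole type is precisely the nontrivial content of the locality of dp-rank. (You also first state the principle with ``sup'' where it should be ``min'', though you immediately correct yourself.) That locality---$\dpr(p)$ equals the minimum of $\dpr(\varphi)$ over $\varphi\in p$---is a theorem about dp-minimal (or, more generally, NIP) theories, and the paper's proof is just a direct citation of it as Corollary~3.5 of \cite{Simdpr} combined with the definition of $\dim(a/A)$ and Fact~\ref{dp=dim}. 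Your last paragraph does correctly flag the needed fact (``dp-rank concentrates on a formula''), so if you replace the vague compactness sketch by an explicit appeal to Simon's Corollary~3.5, your proof becomes complete and is in substance the same as the paper's.
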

	\proof By \cite[Corollary 3.5]{Simdpr} in dp-minimal structures, $\dpr()$ is local, namely $\dpr(a/A)$ is the minimal dp-rank
	of an $A$-definable set containing $a$. By definition, the same is true for $\dim(a/A)$, so we can use Fact \ref{dp=dim}.\qed \\
	
	By the sub-additivity of the dp-rank, \cite{KOUadd}, we have:
\begin{fact}
	$\dim(ab/A)\le \dim(a/A)+\dim(b/Aa).$
\end{fact}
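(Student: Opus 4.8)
The plan is to deduce this immediately from the identification of $\dim$ with $\dpr$ recorded in Corollary \ref{dp=dim-types}, combined with the subadditivity of the dp-rank from \cite{KOUadd}. First I would recall that a weakly o-minimal structure is dp-minimal, hence in particular NIP, so that the subadditivity theorem of Kaplan--Onshuus--Usvyatsov is applicable here: for any finite tuples $a,b$ and any parameter set $A$ one has $\dpr(ab/A)\le \dpr(a/A)+\dpr(b/Aa)$, where $ab$ denotes the concatenated tuple. Then I would apply Corollary \ref{dp=dim-types} three times, rewriting $\dpr(ab/A)=\dim(ab/A)$, $\dpr(a/A)=\dim(a/A)$, and $\dpr(b/Aa)=\dim(b/Aa)$; substituting these into the dp-rank inequality gives exactly the stated bound.

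There is essentially no genuine obstacle here beyond bookkeeping. The only points worth a moment's care are that the cited subadditivity result is being invoked under its correct hypotheses (an NIP, in fact dp-minimal, ambient structure), and that Corollary \ref{dp=dim-types} is being used for tuples of arbitrary finite length and over the base sets $A$ and $Aa$ — both small by our standing saturation conventions — which is indeed the generality in which that corollary was stated. Nothing further is needed.
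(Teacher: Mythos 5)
Your proof is correct and is precisely the argument the paper has in mind: the Fact is stated immediately after the sentence ``By the sub-additivity of the dp-rank, \cite{KOUadd}, we have:'' and follows from Corollary~\ref{dp=dim-types} exactly as you describe.
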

	
We shall use several times the fact that a definable subset of $M^n$ in weakly o-minimal theories has a finite decomposition into definable cells, each homeomorphic, via an appropriate projection to an open set in some $M^k$ (see \cite[Theorem 4.11]{MacMaSt}). Also, we shall use the fact, \cite[Theorem 4.7]{MacMaSt}, that dimension of definable sets is preserved under definable bijection.
	
	 From now on we only use the weakly o-minimal dimension (and not dp-rank), providing -- for the sake of completeness -- self contained proofs of the properties we need.
	
	\begin{lemma}\label{external} Assume that $X\sub M^n$ is definable over $A$ and that there is an externally defined set $W$ (i.e. $W$ is definable in an elementary extension $\CN$) such that $X(M)\sub W$. If $\dim(W)\leq s$ then $\dim(X)\leq s$.
	\end{lemma}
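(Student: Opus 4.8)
The plan is to reduce the statement to a purely internal fact about definable sets in the elementary extension $\mathcal N$. Set $l:=\dim(X)$; we must show $l\le s$. By the definition of dimension there is a coordinate projection $\pi$ of $M^n$ onto some $l$ of the coordinates such that $\pi(X)\sub M^l$ has nonempty interior; as the basic open subsets of $M^l$ are products of open intervals of $M$, the set $\pi(X)$ contains a product $\prod_{i=1}^{l}C_i$ with each $C_i$ a nonempty open interval of $M$, hence (the order being dense) infinite. Regard $\pi$ also as the corresponding coordinate projection $N^n\to N^l$. Applying $\pi$ to $X(M)\sub W$ gives $\pi(X)=\pi(X(M))\sub\pi(W)$, so $\prod_{i=1}^{l}C_i\sub\pi(W)$; and since a coordinate projection cannot increase dimension, $\dim(\pi(W))\le\dim(W)\le s$. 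Hence it suffices to prove the following, which then yields $l\le\dim(\pi(W))\le s$:

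$(\ast)$: if $V\sub N^l$ is definable in $\mathcal N$ and $V\supseteq\prod_{i=1}^{l}C_i$ with each $C_i$ an infinite subset of $M$, then $\dim(V)\ge l$.

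I would prove $(\ast)$ by induction on $l$, the point being that it refers to $M$ only through the infinitude of the $C_i$, which is unaffected by enlarging the ambient model. For $l=1$: the definable set $V\sub N$ contains the infinite set $C_1$, hence is infinite; being a finite union of convex sets, one of which is therefore infinite and so contains an interval, $V$ has nonempty interior, i.e. $\dim(V)=1$. For $l>1$: let $\rho\colon N^l\to N^{l-1}$ be the projection onto the first $l-1$ coordinates, and put $Z:=\{\x\in N^{l-1}:V_{\x}\text{ is infinite}\}$, where $V_{\x}$ is the fibre of $V$ over $\x$; since the order is dense, ``$V_{\x}$ is infinite'' is equivalent to the first-order condition ``$\exists y<z\,[y,z]\sub V_{\x}$'', so $Z$ is definable in $\mathcal N$. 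For every $\x\in\prod_{i=1}^{l-1}C_i$ we have $V_{\x}\supseteq C_l$, so $\x\in Z$; thus $Z\supseteq\prod_{i=1}^{l-1}C_i$ and the inductive hypothesis gives $\dim(Z)\ge l-1$. Finally, by the standard lower bound for the dimension of a set in terms of its image and fibres under a coordinate projection (a consequence of cell decomposition), $\dim(V)\ge\dim\{\x:\dim(V_{\x})\ge 1\}+1=\dim(Z)+1\ge l$. This completes the induction, and with it the proof.

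The step demanding care --- and where a naive argument would stall --- is the passage from ``$\pi(X)$ has nonempty interior in $M^l$'' to a lower bound on $\dim(\pi(W))$ computed in $N^l$: the product $\prod_{i=1}^{l}C_i$ is open in $M^l$ but, $M$ being a proper elementary submodel, it typically has empty interior in $N^l$, so one cannot simply assert that $\pi(W)$ has nonempty interior there. The inductive proof of $(\ast)$ circumvents this by arguing fibrewise and invoking only infinitude of definable sets, which is absolute. Everything else --- commuting coordinate projections with the operation $(-)(M)$, monotonicity of $\dim$ under coordinate projection, definability of ``infinite fibre'', and the image--fibre lower bound for dimension --- is routine from the properties of weakly o-minimal dimension recalled in this section.
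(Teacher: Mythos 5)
Your proof is correct, and it takes a genuinely different route from the paper's. The paper works ``from $W$ down'': it cell-decomposes $W$ inside $\mathcal N$, observes that each cell of dimension $\le s$ admits a finite-to-one coordinate projection onto $s$ coordinates, and then restricts that projection to $X(M)\sub W$ to conclude $\dim(X)\le s$. You work ``from $X$ up'': you project $X$ onto $l=\dim(X)$ coordinates, note that the image contains a box $\prod_i C_i$ with each $C_i$ an infinite (densely ordered) subset of $M$, and then establish the purely internal fact $(\ast)$ by an induction that replaces ``has nonempty interior'' --- a property that does not transfer from $M^l$ to $N^l$ --- by ``has infinite fibres'', which is absolute and first-order expressible uniformly (your reformulation $\exists y<z\,[y,z]\sub V_{\bar x}$ is exactly the right move, using density and weak o-minimality of $\theo(\CN)$). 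Your approach has the merit of isolating the genuine subtlety and of avoiding any need to split $X(M)$ along the $\CN$-definable cells of $W$ (the paper's ``without loss of generality $W$ is one of these $W_i$'' glosses over the fact that $X(M)\cap W_i$ need not be $\CM$-definable); on the other hand it leans on the fibrewise additivity of dimension ($\dim(V)\ge \dim\{\bar x:\dim(V_{\bar x})\ge1\}+1$), which is available from \cite{MacMaSt} but is a slightly heavier tool than the bare cell decomposition the paper invokes. Both arguments are sound; yours is the more scrupulous.
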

	\proof
	
	By weak o-minimality, there is a cell decomposition $C_1,\dots, C_l$ for $W$ and projections $\pi_i: C_i\to M^{t_i}$ such that $\pi_i(C_i)$ is open and $\pi_i|C_i$ is a homeomorphism. So $\dim(C_i)=t_i$. It follows that we can find $N$-definable sets $W_1,\ldots, W_r\subseteq W$ such that every $W_i$ has a finite-to-one projection onto $s$ of the coordinates. Without loss of generality, $W$ is one of these $W_i$ and hence there is a finite-to-one projection $\pi:W\to N^s$. Since $X(M)\sub W$ the restriction of $\pi$ to $X(M)$ is also finite-to-one. Hence $\dim(X)\leq s$.\qed \\

	We also need the following:
	\begin{lemma} \label{dp} Let $(J,\fleq)$ be a definable infinite linearly ordered set without a maximum element. Let $\{X_b:b\in J\}$ be a definable family of subsets of $M^n$ such that for $b_1\fleq  b_2$, $X_{b_1}\sub X_{b_2}$. Assume that for every $b\in J$ we have $\dim(X_b)\leq m$. Let $U=\bigcup_{b\in I} X_b$. Then $\dim(U)\leq m$.\end{lemma}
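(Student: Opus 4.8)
The plan is to collapse the increasing union into a single \emph{externally} definable set and then quote Lemma \ref{external}. First note that $U$ is definable (over the parameters defining $J$ and the family $\{X_b\}$), since $x\in U$ iff $\exists b\,(b\in J\wedge x\in X_b)$; fix a formula with $X_b=\{x\in M^n:\phi(x,b)\}$. Next I would pass to an $|M|^{+}$-saturated elementary extension $\CN\succ\CM$. Since $(J,\fleq)$ has no greatest element, every finite subset of $J(\CM)$ is bounded above in $J(\CM)$, so the partial type $\{b\in J\}\cup\{\,c\fleq b:c\in J(\CM)\,\}$ is finitely satisfiable and hence realised in $\CN$ by some $b^{\ast}$. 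By the monotonicity hypothesis, $X_c(\CM)=X_c(\CN)\sub X_{b^{\ast}}(\CN)$ for every $c\in J(\CM)$, and therefore $U(\CM)\sub X_{b^{\ast}}(\CN)$.

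It then remains to see that $\dim\bigl(X_{b^{\ast}}(\CN)\bigr)\le m$, for which the key point is that the bound ``$\dim(X_b)\le m$'' is a first-order property of $b$. Indeed, for each of the finitely many choices of $m+1$ coordinates $\sigma$, the projection $\pi_\sigma(X_b)$ is defined by some formula $\psi_\sigma(\,\cdot\,,b)$, and a definable subset of $M^{m+1}$ has nonempty interior exactly when it contains a product of intervals with endpoints in $M\cup\{\pm\infty\}$; hence ``$\pi_\sigma(X_b)$ has empty interior'' is expressed by $\neg\exists\bar u\,\bar v\,\bigl(\bar u<\bar v\wedge\forall\bar x\,(\bar u<\bar x<\bar v\to\psi_\sigma(\bar x,b))\bigr)$, and $\dim(X_b)\le m$ iff this holds for every $\sigma$. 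Thus $\{b\in J:\dim(X_b)\le m\}$ is $\CM$-definable and, by hypothesis, equals $J(\CM)$; by elementarity it equals $J(\CN)$, so in particular $b^{\ast}$ lies in it and $\dim(X_{b^{\ast}}(\CN))\le m$.

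Finally, Lemma \ref{external} applied with $X=U$, with the externally defined set $W=X_{b^{\ast}}(\CN)$, and with $s=m$, yields $\dim(U)\le m$, as required. The one place that calls for a little care is the definability of the dimension bound across a definable family, which is a standard feature of weakly o-minimal structures (it also follows from the uniform cell decomposition of \cite[Theorem 4.11]{MacMaSt}); once that is in hand the argument is just a compactness trick feeding into Lemma \ref{external}, which was set up for exactly this purpose.
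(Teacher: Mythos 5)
Your proof is correct and follows essentially the same route as the paper's: pick $b^{\ast}$ in an elementary extension above all of $J(\CM)$, note that the dimension bound on $X_b$ is a definable property of $b$ so it transfers to $b^{\ast}$, observe $U(\CM)\subseteq X_{b^{\ast}}$, and invoke Lemma~\ref{external}. (A minor slip: $X_c(\CM)=X_c(\CN)$ need not hold, but only the inclusion $X_c(\CM)\subseteq X_c(\CN)\subseteq X_{b^{\ast}}(\CN)$ is used, so the argument is unaffected.)
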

	\proof  Let $b^*\in J(N)$ be an element in an elementary extension $\mathcal N$, such that $b^*$ is greater than all elements of $J(M)$ (since $J$ has no maximum such a $b^*$ exist). Since dimension is definable in parameters, $\dim(X_{b^*})\leq m$.  For every $b\in M$, $X_b(M)\sub X_{b^*}$ therefore $U(M)\sub X_{b^*}$.  It follows from Lemma \ref{external} that $\dim(U)\leq m$.\qed \\
	
\begin{lemma}\label{dp2} Let $c_1,\ldots, c_m$ be each a tuple of elements from $M$, of possibly different lengths, and let $C\sub M$ be an $A$-definable convex set.

Given any initial segment $I\sub C$ (possibly defined over additional parameters), there exists $b\in I$ such that
for every $i=1,\ldots, m$, $\dim(bc_i/A)=1+\dim(c_i/A)$.
\end{lemma}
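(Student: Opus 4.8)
The plan is to build $b$ by realizing a suitable partial type; the only real issue is the \emph{lower} bound $\dim(bc_i/A)\ge 1+\dim(c_i/A)$, since the matching upper bound is immediate from subadditivity of dimension, $\dim(bc_i/A)\le\dim(b/A)+\dim(c_i/Ab)\le 1+\dim(c_i/A)$, whereas the lower bound cannot be inferred from mere genericity of $b$ over $Ac_i$ because $\dcl$ need not satisfy exchange. Set $d_i:=\dim(c_i/A)$ and let $p(x)$ be the partial type, over $A\cup\{c_1,\dots,c_m\}$ together with the parameters defining $I$, which says that $x\in I$ and that $(x,c_i)\notin S$ for each $i\le m$ and each $A$-definable $S\sub M^{1+|c_i|}$ with $\dim S\le d_i$. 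If $b\models p$ — and by our saturation assumption such $b$ exists in $\CM$, since $p$ is over a small parameter set — then $b\in I$, and no $A$-definable set of dimension $\le d_i$ contains $(b,c_i)$, so $\dim(bc_i/A)\ge d_i+1$; with the upper bound this gives $\dim(bc_i/A)=1+d_i$ for each $i$, as required. It therefore suffices to prove that $p$ is consistent, and by compactness this reduces to the following: for any finitely many $A$-definable $S_1,\dots,S_r$ with $S_j\sub M^{1+|c_{i_j}|}$ and $\dim S_j\le d_{i_j}$, the set $I\setminus\bigcup_j (S_j)_{c_{i_j}}$ is nonempty, where $(S)_c:=\{x:(x,c)\in S\}$.

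This follows at once from the key claim: \emph{if $S\sub M^{1+k}$ is $A$-definable, $c\in M^k$, and $\dim S\le\dim(c/A)$, then $S_c$ contains no infinite initial segment of $C$.} Indeed, assuming this (and, as we may, that $I$ is infinite), each $(S_j)_{c_{i_j}}$ is a finite union of convex subsets of $M$ containing no infinite initial segment of $C$, so its complement in $C$ contains an initial segment $C\cap(\inf C,u_j)$; the intersection of these finitely many initial segments with $I$ (itself an initial segment of $C$) is a nonempty initial segment of $C$, and any $b$ in it works.

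It remains to prove the claim. Replacing $C$ by $C\setminus\{\min C\}$ and $I$ by $I\setminus\{\min C\}$ if $C$ has a least element, we may assume $C$ has none. Suppose for contradiction that $S_c\supseteq C\cap(\inf C,t)$ with this set infinite. Since $S_c\ne\emptyset$ we have $c\in\pi(S)$, where $\pi$ is the projection onto the last $k$ coordinates; hence $\dim(c/A)\le\dim\pi(S)\le\dim S\le\dim(c/A)$, so all four equal some $d$, and in particular every cell of $S$ has dimension $\le d$. Fix an $A$-definable cell decomposition of $S$ (\cite[Theorem~4.11]{MacMaSt}). Each fibre $(E)_c$ of a cell $E$ is a finite union of convex subsets of $M$; since $C\cap(\inf C,t)$ is convex with no least element and is covered by the finitely many sets $(E)_c$, a straightforward pigeonhole gives a single cell $E$ with $(E)_c$ containing an infinite initial segment $C\cap(\inf C,t')$ of $C$. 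Let $T$ be the set of coordinates retained by the projection $\rho$ onto an open subset of $M^{\dim E}$ exhibiting $E$ as a cell. If $T$ omits the first coordinate, then on $E$ the first coordinate is a continuous function of the retained ones (all among the last $k$), so $(E)_c$ has at most one element, which is impossible; hence $T$ contains the first coordinate, and the remaining retained coordinates $T_Y$ — all among the last $k$ — number $\dim E-1\le d-1<k$, so some coordinate $y_j$ of the last $k$ is omitted by $\rho$ and is, on $E$, a continuous $A$-definable function $h_j$ of the retained coordinates. Writing $c_{T_Y}$ for the part of $c$ in the $T_Y$-coordinates, we get $c_j=h_j(b,c_{T_Y})$ for every $b\in C\cap(\inf C,t')$; thus $h_j(\,\cdot\,,c_{T_Y})$ is constant on an initial segment of $C$, so the $A$-definable partial function $g_j$ sending $\bar z$ to the eventual value of $h_j(\,\cdot\,,\bar z)$ as its first argument decreases to $\inf C$ is defined at $c_{T_Y}$ with value $c_j$. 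Hence $c\in\dcl(Ac_{T_Y})$, so $\dim(c/A)\le|T_Y|\le d-1$, contradicting $\dim(c/A)=d$. I expect this last step — recovering the coordinate $c_j$ from the germ of the cell at the endpoint of $C$, which is what substitutes for the missing exchange property — to be the main obstacle; the surrounding argument is routine bookkeeping.
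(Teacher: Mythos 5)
Your proof is correct, but it takes a genuinely different route from the paper's. Both proofs set up the same partial type $p(x)$ and reduce to showing $p$ is consistent, i.e.\ that finitely many fibres $(S_j)_{c_{i_j}}$ (with $\dim S_j\le\dim(c_{i_j}/A)$) cannot cover $I$. The paper argues by contradiction from an assumed covering: it extracts a single $A$-definable $X$ whose fibres $X(M,w)$ are initial segments of $C$ for all $w$ in the minimal locus $W_1$ of $c_1$, then fibrates \emph{over the first coordinate}, applies Lemma~\ref{dp} (increasing unions of sets of bounded dimension) to find $b_0$ with $\dim X_{b_0}=\dim W_1$, and concludes $\dim X\ge 1+\dim W_1>\dim W_1\ge\dim X$ --- the contradiction is that $X$ is too large. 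You instead prove the sharper local statement that no such fibre $S_c$ contains an infinite initial segment of $C$, via cell decomposition of $S$: a cell $E$ whose fibre $(E)_c$ contains an initial segment must retain the first coordinate, so the omitted coordinates of $c$ are recovered from the $|T_Y|\le\dim E-1\le d-1$ retained ones by taking $A$-definable germs of the cell's coordinate functions at $\inf C$, forcing $\dim(c/A)\le d-1$ --- the contradiction is that $c$ is too small. The two contradictions are dual (inflating the total space vs.\ deflating the base); yours makes the substitute for exchange completely explicit through the germ construction and bypasses Lemma~\ref{dp}, at the price of a somewhat more elaborate cell-by-cell bookkeeping. A few details deserve a word in a polished write-up: you should note that \emph{all} omitted coordinates $c_j$ (not just ``some'' $y_j$) are recovered by germ functions $g_j$ before asserting $c\in\dcl(Ac_{T_Y})$; the step $\dim(c/A)\le|T_Y|$ is subadditivity, $\dim(c/A)=\dim(cc_{T_Y}/A)\le\dim(c_{T_Y}/A)+\dim(c/Ac_{T_Y})\le|T_Y|+0$; and one should verify that the cell decomposition of [MacMaSt, Thm.~4.11] can be taken over $A$, which it can.
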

\begin{proof} For each $i=1,\ldots, m$ we fix $W_i$, definable over $A$, such that $\dim(c_i/A)=\dim W_i$.
Consider the type $p(x)$:
$$\{x\in I\}\cup \bigcup_{i=1}^m\{\neg X(x,c_i): \dim(X)\leq \dim(c_i/A) \mbox{ and $X\sub M\times W_i$ definable over $A$}\}.$$
Since $C$ is definable over $A$, we may take above only those sets $X$ such that for every $w\in W_1$, $X(M,w)$ is a nonempty subset of $C$.

If $p$ is consistent then its realization will be the desired $b$. So, we assume towards contradiction that $p$ is inconsistent.

It follows that there are $X_1,\ldots, X_r$ as above, all $A$-definable, and for each $j=1,\ldots, r$, there is $i(j)\in\{1,\ldots,m\}$, such that
$$\forall x\,\, \left ( x\in I\rightarrow \bigvee_{j=1}^r X_i(x,c_{i(j)})\right ).$$
In addition, for each $j=1,\ldots, r$, $\dim(X_j)\leq \dim(W_{i(j)}).$

By weak o-minimality, each set $X_j(M,c_{i(j)})$ is a finite union of convex subsets of $C$, and since $I$ is a non-empty initial segment of $C$, the above implies that for some fixed $j$, the set $X_j(M,c_{i(j)})$ contains an initial segment of $C$. For simplicity, we write $X$ for $X_j$ and assume that $i(j)=1$. Namely, we assume that $X(M,c_1)$ contains an initial segment of $C$. This is an $A$-definable property of $c_1$, therefore we may assume, after possibly shrinking $W_1$, that for every $w\in W_1$, the set $X(M,w)$ contains an initial segment of $C$. Moreover, by choosing in each $X(M,w)$ the first convex component,  we may assume that for every $w\in W_1$, the  set $X(M,w)$ contains a single convex set, which is an initial segment of $C$.

For $b\in C$, let $X_b=\{w\in W_1:\la b,w\ra\in X\}$. Notice that $\bigcup_{b\in C} X_b=W_1$.
 Our assumptions imply that for $b_1\leq b_2\in C$, we have $X_{b_2}\sub X_{b_1}$. It now follows from lemma \ref{dp} (with the ordering $<$ reversed) that for some $b_0\in C$ we have $\dim(X_{b_0})=\dim(W_1)$.

 Also, our assumptions imply that for every $b<b_0$ in $C$ and $w\in X_{b_0}$ we have $(\la b,w\ra \in X$, namely $C^{<b_0}\times X_{b_0}\sub X$. Thus, $\dim(X)\geq 1+\dim X_{b_0}> \dim W_1$. But by our assumption,  $\dim(X)\le \dim(c_1/A)=\dim(W_1)$. This is a contradiction and the lemma is proved.\end{proof}

As a corollary we obtain:
\begin{cor}\label{dp3} Assume that $\CM$ has definable Skolem funcions, and let $X\sub M^n$ be an $A$-definable set. Let $c_1, \ldots, c_m$ be tuples from $M$ (of possibly different lengths).
Then there exists $b\in X$ such that for each $i=1,\ldots, m$, $$\dim(bc_i/A)=\dim(X)+\dim(c_i/A).$$
\end{cor}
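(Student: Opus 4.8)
The plan is to reduce --- via the weakly o-minimal cell decomposition --- to the case where $X$ is an $A$-definable box $D_1\times\cdots\times D_d$ with $d=\dim X$ and each $D_j\sub M$ an infinite convex set, and then to produce $b$ one coordinate at a time by $d$ successive applications of Lemma \ref{dp2}. The case $d=0$ is immediate ($X$ is then finite, so $X\sub\acl(A)$, and for $b\in\acl(A)$ one has $\dim(bc_i/A)=\dim(c_i/A)$), so assume $d\ge 1$.

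For the reduction I would use \cite[Theorem 4.11]{MacMaSt} to write $X$ as a finite union of $A$-definable cells and fix one, $C$, with $\dim C=\dim X=:d$; by that theorem a coordinate projection $\pi\colon M^n\to M^d$ restricts to a homeomorphism $\pi|_C$ of $C$ onto an open set $V:=\pi(C)\sub M^d$, and its inverse $\sigma:=(\pi|_C)^{-1}$ is an $A$-definable bijection $V\to C$. Since dimension is invariant under definable bijections \cite[Theorem 4.7]{MacMaSt}, $\dim V=d$, and applying this to the $A$-definable bijection $(v,y)\mapsto(\sigma(v),y)$ gives $\dim(\sigma(v)c_i/A)=\dim(vc_i/A)$ for every $v\in V$ and every $i$. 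So it suffices to find $b_0\in V$ with $\dim(b_0c_i/A)=d+\dim(c_i/A)$ for all $i$, and then set $b:=\sigma(b_0)\in C\sub X$. Now, since $V$ is a nonempty open $A$-definable subset of $M^d$, it contains an $A$-definable box $B=D_1\times\cdots\times D_d$ with each $D_j$ an infinite convex subset of $M$: a top-dimensional cell of $V$ is itself open, an open cell of $M^d$ is built from an open cell of $M^{d-1}$ by adjoining a coordinate ranging over an interval $(f(u),g(u))$ with $f<g$ definable (possibly cut-valued) functions, and one shrinks an inductively obtained box inside the lower cell and uses the monotonicity of $f,g$ to produce the box inside $V$; definable Skolem functions are what allow all these choices to be made over $A$.

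Finally I would build $b_0=(b_1,\dots,b_d)$ recursively: suppose $b_1,\dots,b_{j-1}$ have been chosen with $\dim((b_1,\dots,b_{j-1},c_i)/A)=(j-1)+\dim(c_i/A)$ for every $i$, and apply Lemma \ref{dp2} with the $A$-definable convex set $D_j$, the initial segment $I:=D_j$, and the tuples $(b_1,\dots,b_{j-1},c_i)$ for $i=1,\dots,m$; this yields $b_j\in D_j$ with
\[
\dim\big((b_1,\dots,b_j,c_i)/A\big)=1+\dim\big((b_1,\dots,b_{j-1},c_i)/A\big)=j+\dim(c_i/A)
\]
for every $i$. After $d$ steps $b_0:=(b_1,\dots,b_d)\in D_1\times\cdots\times D_d=B\sub V$ has $\dim(b_0c_i/A)=d+\dim(c_i/A)$, and $b:=\sigma(b_0)\in X$ then satisfies $\dim(bc_i/A)=\dim X+\dim(c_i/A)$, as required. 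The step that needs care is the passage to a box: Lemma \ref{dp2} insists that the convex set be definable over the base $A$, so one really does need an honest box and not merely the cell $C$, whose vertical convex fibers would depend on the earlier coordinates --- and securing such a box over $A$ is exactly what the hypothesis of definable Skolem functions, combined with the finer structure theory of weakly o-minimal sets (monotonicity of definable cut-valued functions), is used for.
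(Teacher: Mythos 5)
Your proof is correct and follows essentially the same approach as the paper: reduce via cell decomposition and definable Skolem functions to an $A$-definable open box in $M^{\dim X}$, then produce the generic point coordinate-by-coordinate with $\dim X$ applications of Lemma \ref{dp2}. The only cosmetic difference is that you build the tuple from the first coordinate outward while the paper proceeds by induction peeling off the first coordinate last; your write-up is also somewhat more explicit about why the cell $\to$ open set $\to$ $A$-definable box reduction preserves the type dimensions, but the underlying argument is the same.
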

\begin{proof}  By cell decomposition we may assume that $X$ is an open subset of $M^n$. We use induction on $n$.

The case $n=1$ follows from Lemma \ref{dp2}. For a general $n$, note first that $X$ contains an $A$-definable open box $I_1\times\cdots\times I_n$. Indeed, since $X$ is open it contains an open box and by the existence of Skolem functions we can find such a box defined over $A$. Applying induction, we can find $a'=\la a_2,\ldots, a_n\ra\in I_2\times\cdots\times I_n$ such that
for each $i=1,\ldots, m$, $\dim(a'c_i/A)=(n-1)+\dim(c_i/A)$.

We now let $c_i'=\la a',c_i\ra$, for $i=1,\ldots, m$, and using again the case $n=1$, find $a_1\in I_1$ such that , for all $i$
$\dim(a_1c_i'/A)=n+\dim(c_i/A)$. The tuple $\la a_1a'\ra $ is the desired $b$.\end{proof}

In the above existence of Skolem functions somewhat simplifies the proof, but is not, in fact, needed. \\

\subsection{Dimension and domination in quotients by convex equivalence relations}
We establish here several properties of quotients of one-dimensional sets by convex equivalence relations.
 Recall:
\begin{definition}
	An equivalence relation $E$ on a linearly ordered set $(J,<)$ is convex if all its equivalence classes are.
\end{definition}

If $\mathcal M$ is weakly o-minimal, $E$ a definable convex equivalence relation with infinitely many classes then $M/E$ is linearly ordered and itself weakly o-minimal. If in addition $Th(\CM)$ is weakly o-minimal then so is $Th(\CM/E)$. It follows, in particular (see next section), that if $\la K;O\ra\models T_{conv}$ then the theories of $K$, $K/O$, $\Gamma$ and $k$ are weakly o-minimal.

Let $\CM$ be a weakly o-minimal structure, $E$ a definable convex equivalence relation on $M$, and let $\pi:M\to M/E$ be the quotient map. For simplicity we also denote by $E$ the equivalence relation on $M^n$ given by $E^n$ and let $\pi:M^n\to (M/E)^n$ be the associated quotient map.
We prove here a domination result for such quotients.

\begin{lemma} \label{opaque} Let $I_1,\ldots, I_n\sub M$ be open convex sets such that each $I_j/E$ is infinite, and $X\sub B=I_1\times\cdots\times I_n$. If $\pi(X)=\pi(B)$ then $\dim(\pi(B\setminus X))<n$.\end{lemma}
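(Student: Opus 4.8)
The plan is to reduce to the one-dimensional case by induction on $n$ and a fibering argument. For $n=1$ the claim is essentially immediate: $X\subseteq I_1$ with $\pi(X)=\pi(I_1)$ means $X$ meets every $E$-class of $I_1$, so $I_1\setminus X$ is a definable subset of $I_1$ that meets no $E$-class fully; by weak o-minimality $I_1\setminus X$ is a finite union of convex sets, each of which is contained in a single $E$-class (otherwise it would contain a whole $E$-class, since $E$-classes are convex and nested in $I_1$, contradicting $\pi(X)=\pi(I_1)$ — wait, a convex set containing two distinct $E$-classes need not contain a third, but here the convex components of $I_1\setminus X$ are disjoint from the corresponding convex components of $X$, and since $\pi(X)=\pi(I_1)$ every class is hit by $X$). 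Hence each convex component of $I_1\setminus X$ lies in one $E$-class, so $\pi(I_1\setminus X)$ is finite, in particular $\dim(\pi(I_1\setminus X))=0<1$.

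For the inductive step, write $B=I_1\times B'$ with $B'=I_2\times\cdots\times I_n$, and think of a point of $B$ as $(t,y)$ with $t\in I_1$, $y\in B'$. For each $t\in I_1$ let $X_t=\{y\in B':(t,y)\in X\}$. The key dichotomy is: either for "most" $t$ (a set of $\pi$-image of full dimension $1$) we have $\pi(X_t)=\pi(B')$, in which case the induction hypothesis applied fiberwise gives $\dim(\pi(B'\setminus X_t))\le n-2$ for those $t$, and then $\dim\bigl(\pi(B\setminus X)\bigr)\le 1+(n-2)=n-1$ by the subadditivity/fiber–dimension estimate for weakly o-minimal dimension (Lemma~\ref{dp} together with the cell-decomposition bound on fiber dimensions); or else the set $S=\{t\in I_1:\pi(X_t)\ne\pi(B')\}$ has $\pi(S)$ of dimension $1$, equivalently $S$ meets infinitely many $E$-classes. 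The hard part is to rule out — or rather to handle — this second case, and this is where the hypothesis $\pi(X)=\pi(B)$ (the *full* product, not just fiberwise) must be used: for $t\in S$ pick $y$ with $\pi(y)\notin\pi(X_t)$, i.e. the whole $E$-class of $(t,y)$ in the fiber direction misses $X$; but $\pi(X)=\pi(B)$ forces some $(t',y')$ with $t'\mathrel{E}t$, $y'\mathrel{E^{n-1}}y$ lying in $X$. So the "bad" behaviour on $S$ is corrected by moving within the $E$-class in the first coordinate. One then argues that after passing to $M/E$ in the first coordinate this cannot happen on a set of positive dimension, because $\pi(X)=\pi(B)$ says exactly that $\pi(X)$ is *all* of $(M/E)^n\cap\pi(B)$; projecting $\pi(B\setminus X)$ to the last $n-1$ coordinates of $M/E$ and to the first, one sees its image in the first $M/E$-coordinate is contained in the set of $\bar t$ over which the fiber of $\pi(X)$ is not everything, which is empty.

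Let me restructure this more cleanly, since the cleanest route avoids the case split. Set $Y=B\setminus X$ and $\bar Y=\pi(Y)\subseteq (M/E)^n$. Since $\pi(X)=\pi(B)$, for every point $\bar b\in\bar Y$ there is a point of $B$ above $\bar b$ lying in $X$; so $\bar b\in\pi(Y)$ means the $E$-class $\pi^{-1}(\bar b)\cap B$ is met by *both* $X$ and $Y$. Now suppose for contradiction $\dim(\bar Y)=n$, i.e. $\bar Y$ has nonempty interior in $(M/E)^n$, so it contains a box $\bar I_1\times\cdots\times\bar I_n$ with each $\bar I_j$ an infinite convex subset of $I_j/E$. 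Pull this back: $J=\pi^{-1}(\bar I_1\times\cdots\times\bar I_n)\cap B$ is a definable box in $M^n$ with $\pi(J)=\bar I_1\times\cdots\times\bar I_n$, $J\cap Y$ surjects onto $\pi(J)$, and — crucially, since $\pi(X)=\pi(B)\supseteq\pi(J)$ — also $J\cap X$ surjects onto $\pi(J)$. The main obstacle, and the heart of the argument, is a dp-minimality / distality–flavoured fact: a definable subset $Z=J\cap Y$ of a box whose complement-within-the-box also surjects onto $\pi(J)$ cannot itself surject onto $\pi(J)$ — intuitively because the "generic" $E$-class in the box direction has, in each coordinate, a one-dimensional (weakly o-minimal) structure, and a convex-equivalence quotient fiber of full dimension would force $Z$ to contain, on a positive-measure set of classes, a whole class, hence $X$ to miss it. I would formalize this by taking $\bar b=(\bar b_1,\ldots,\bar b_n)$ generic in $\bar I_1\times\cdots\times\bar I_n$ over $A$ (the defining parameters), so $\dim(\bar b/A)=n$; lift to $b=(b_1,\ldots,b_n)\in J$ with $\pi(b)=\bar b$. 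Then $b\in Y\cup X$, and whichever it is, say $b\in Y$; the fact that $b_i$ ranges over the full $E$-class of $\bar b_i$ while $\bar b$ stays generic means $\dim(b/A)$ can be taken $=2n$ (adding one dimension per coordinate within the class via Lemma~\ref{dp2}), but the $A$-definable set $Y$ has $\dim(Y)\le n<2n$ — wait, $Y\subseteq B\subseteq M^n$ so $\dim Y\le n$ always, and $\dim(b/A)\le n$. That doesn't bite. The correct pressure point is instead: genericity of $\bar b$ forces $b$ to be generic *within its $E$-class over $A\bar b$*, and one shows the $E$-class of $\bar b$ — an $A\bar b$-definable box of dimension $n$ — meets $X$ in a set of dimension $n$ (since $X$ surjects onto a neighborhood of $\bar b$, hence onto the relevant classes) and likewise for $Y$, and then a single $E$-class, being (coordinatewise) a product of convex sets in $M$, with a definable partition into $X$-part and $Y$-part both of full dimension $n$, yields by the $n=1$ case applied inside the class (to the quotient by the *trivial* relation, i.e. ordinary weak o-minimality) the needed contradiction; concretely, one reduces to: inside an open box in $M^n$, you cannot have a definable set and its complement both mapping onto the whole box under the identity — which is trivially false unless one of them is empty, contradiction. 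I'll present the induction-on-$n$-with-fiberwise-application-of-Lemma~\ref{dp} version in the body, as that is the one that goes through without hand-waving: fiber over the first coordinate, split into the set $S\subseteq I_1$ where the fiber $\pi$-image is full (apply induction + Lemma~\ref{dp} to bound that part by $n-1$) and its complement $S'$ where it is not (there $\pi(S')$ is finite by the $n=1$ argument applied to the first coordinate together with $\pi(X)=\pi(B)$, contributing dimension $\le 0+(n-1)=n-1$), and conclude $\dim(\pi(B\setminus X))\le n-1$.

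$\square$

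*(Note to self/referee: the delicate point is genuinely the interaction of the two surjectivity hypotheses — fiberwise versus total — and the right tool is the fiber-dimension bound from cell decomposition plus Lemma~\ref{dp}; the $n=1$ base case and the reduction are routine.)*
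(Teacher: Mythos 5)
Your base case is essentially the paper's (each convex component of $I_1\setminus X$ crosses at most two $E$-classes, because a convex set hitting three consecutive classes would contain the middle one outright, contradicting $\pi(X)=\pi(I_1)$), and the general shape of the inductive step---fiber, induct, use subadditivity---is right. But the step as you finally present it hinges on the assertion that $\pi(S')$ is finite, where $S'=\{t\in I_1:\pi(X_t)\ne\pi(B')\}$, ``by the $n=1$ argument applied to the first coordinate together with $\pi(X)=\pi(B)$.'' That is not a proof: the $n=1$ argument would apply to $S'$ only if you already knew $\pi(I_1\setminus S')=\pi(I_1)$, which is precisely the thing in question. The hypothesis $\pi(X)=\pi(B)$ says that for every pair of classes $([\bar t],[\bar y])$ \emph{some} $(t',y')\in X$ hits it, but the witnessing $t'$ is allowed to vary with $y$; it gives no control over a fixed fiber $X_t$. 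And even for $t\in S$ the bound you want is not immediate, because the fiber of $\pi(Y)$ over a class $[\bar t]$ is the union $\bigcup_{t\in[\bar t]}\pi(Y_t)$, not a single $\pi(Y_t)$; bounding one representative by induction does not bound the union. (Your citation of Lemma~\ref{dp} is also off target: that lemma is about increasing chains, whereas what is needed here is subadditivity of dimension.)

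The paper supplies exactly the missing ingredient: a \emph{crossing} reduction. Assuming for contradiction that $\dim\pi(Y)=n$, it first shrinks $B$ so that both $\pi(X)=\pi(B)$ and $\pi(Y)=\pi(B)$ (the reduction you sketch in your ``cleanest route'' paragraph). It then introduces $X^*=\{(a,b)\in B'\times I_n : X_a \text{ meets both } [b] \text{ and its complement}\}$, uses the $n=1$ case to show $\pi(X^*)$ projects finite-to-one to $\pi(B')$ so $\dim\pi(X^*)<n$, and shrinks $B$ once more so that $X^*=\emptyset$. Only after this does the fiberwise picture become tractable: with no crossings, each slice $X^b=\{a:(a,b)\in X\}$ is the same for all $b$ in a given $E$-class, \emph{and} one can now deduce $\pi(X^b)=\pi(B')$ for every $b$ from $\pi(X)=\pi(B)$ (this is the step your argument assumed prematurely). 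Induction then bounds $\dim\pi(B'\setminus X^b)<n-1$ uniformly in $b$, the constancy of $X^b$ on classes identifies the fiber of $\pi(Y)$ over $\pi(b)$ with $\pi(B'\setminus X^b)$, and subadditivity gives $\dim\pi(Y)<n$, a contradiction. So the skeleton of your argument is sound, but without the crossing reduction the fiberwise surjectivity you need simply is not available, and the proof does not close.
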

\proof


Let us say that a definable set $X$ {\em crosses} an $E$-class $[a]$ if both $X$ and $X^c$ intersect $[a]$.

%
%
%

We proceed by induction on $n$. The result for $n=1$ follows from weak o-minimality of $K/E$: Indeed, if $X\sub I$ is definable then, since $X$ consists of at most $r$-convex sets and each $E$-class is convex, then $X$ crosses at most $2r$-many classes. Thus, since $X$ intersects all $E$-classes, $\pi(I_1\setminus X))$ is finite.

Assume now that $X\sub M^n$ and $\pi(X)=\pi(B)$. let $Y=B\setminus X$,  and assume towards a contradiction that $\dim(\pi(Y))=n$. This implies that $\pi(Y)$ has nonempty interior, and therefore contains a box. The pre-image of such a box is itself a box (since $E$ is convex, and we may assume that its domain is an open box).  Thus there exists an open box $B_1\sub Y$ such that $\pi(B_1)$ is an open subset of $\pi(X)$. By our assumption, we also have $\pi(B_1\cap X)=\pi(B_1)=\pi(B_1\cap Y)$.

Thus we may assume, by replacing $B$ with $B_1$,  that $\pi(X)=\pi(Y)=\pi(B)$.  To simplify notation let $B'=I_1\times \cdots \times I_{n-1}$.
We define
$$X^*=\{\la a,b\ra\in B'\times I_n: X_a \mbox{ crosses }[b]\}.$$

By the case $n=1$, for each $a\in B'$, $X_a$ crosses at most finitely many $E$-classes in $I_n$.
It follows that $\pi(X^*)\sub \pi(B)$ projects finite-to-one into $\pi(B')$, and thus $\dim(\pi(X^*))<n$.
Hence, there exists an open box $R\sub \pi(B)$ which is disjoint from $\pi(X^*)$. By replacing $B$ with $\pi^{-1}(R)$ we may assume that $X^*=\0$. Namely, for every $\la a,b\ra\in B'\times I_n$, either $X_a$ contains $[b]$ or it is disjoint from $[b]$.

For every $b\in I_n$, we let $X^b=\{a\in B':\la a,b\ra\in X\}$. We claim that for each $b\in I_n$, $\pi(X^b)=\pi(B')$. Indeed, if $a\in B'$ then there is $\la a',b'\ra\in X$ such that $\la a',b'\ra E\la a,b\ra$ (since $\pi(X)=\pi(B)$). By our assumption, $X_{a'}$ does not cross $[b']$, therefore $[b']$ is contained in $X_{a'}$. We have $b E b'$, hence $\la a',b\ra \in X$. It follows that $\pi(a)=\pi(a')\in \pi(X)$. Our induction assumption thus implies that $\dim(B'\setminus X^b)<n-1$.

We also claim that for $b_1E b_2$, we have $X^{b_1}=X^{b_2}$ (and hence also $B'\setminus X^{b_1}=B'\setminus X^{b_2}$). Indeed, if $a\in X^{b_1}$ then $X_a\cap [b_1]\neq \0$ and therefore $[b_1]\sub X_a$, so $b_2\in X_a$, namely $a\in X^{b_2}$. The opposite inclusion follows.

Thus, for every  $b\in I_n$, we have
$\pi(B\setminus X)^{\pi(b)}=\pi(B'\setminus X^b)$, which has dimension $<n-1$. It follows that  for every $c\in \pi(I_n)$, $\dim(\pi(B\setminus X)^c)<n-1$. By the sub-additivity of dimension, $\dim(\pi(B\setminus X))<n$, contradicting our assumption.\qed \\

The following could be viewed as domination of types in $M^n$ by  generic types in $M^n/E$.

\begin{proposition} \label{opaque1}In the above setting, assume that $p\vdash (M/E)^n$ is a complete generic type over $A\sub M$. Then $q=\pi^{-1}(p)$ is a complete type over $A$.
\end{proposition}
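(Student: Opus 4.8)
The plan is to show directly that the partial type $q=\pi^{-1}(p)$ decides every formula over $A$: for each $A$-definable $Z\sub M^n$, either $q\vdash x\in Z$ or $q\vdash x\notin Z$, and this makes $q$ complete. Write $\hat Z:=\pi^{-1}(\pi(Z))$ for the $E$-saturation of $Z$. The two elementary observations driving the argument are: (i) the sets $\pi(Z)$ and $\pi(\hat Z\setminus Z)$ are $E$-invariant and $A$-definable in $\CM$, hence correspond to sets definable over $A$ in the weakly o-minimal structure $\CM/E$, so $p$ decides whether $\pi(x)$ lies in each of them; and (ii) $x\in Z$ implies $\pi(x)\in\pi(Z)$, and $x\in\hat Z\setminus Z$ implies $\pi(x)\in\pi(\hat Z\setminus Z)$, so control over $\pi(x)$ transfers to partial control over $x$.

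The key step will be the purely geometric claim that for every definable $Z\sub M^n$ one has $\dim\big(\pi(\hat Z\setminus Z)\big)<n$. Granting this, the proposition follows quickly. If $p\vdash \pi(x)\notin\pi(Z)$, then by (ii) $q\vdash x\notin Z$. If instead $p\vdash \pi(x)\in\pi(Z)$, then, on the one hand, since $x\in\hat Z$ is by definition equivalent to $\pi(x)\in\pi(Z)$, we get $q\vdash x\in\hat Z$; on the other hand, $\dim\big(\pi(\hat Z\setminus Z)\big)<n$ and $p$ is generic of dimension $n$, so (by locality of $\dim$ in $\CM/E$) $p$ avoids the $A$-definable set $\pi(\hat Z\setminus Z)$, giving $q\vdash \pi(x)\notin\pi(\hat Z\setminus Z)$ and hence, by (ii), $q\vdash x\notin \hat Z\setminus Z$. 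Combining the two and using $Z\sub\hat Z$ yields $q\vdash x\in Z$. In either case $q$ decides $Z$.

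It remains to prove the claim, and here the work is done by Lemma~\ref{opaque}. Suppose for contradiction that $\dim\big(\pi(\hat Z\setminus Z)\big)=n$; then $\pi(\hat Z\setminus Z)$ contains an open box in $(M/E)^n$, and, pulling it back (the preimage of a box under $\pi$ is a box, since $E$ is convex) and shrinking slightly, we obtain an $E$-saturated open box $B=I_1\times\cdots\times I_n\sub M^n$ with each $I_j/E$ infinite, $\pi(B)\sub\pi(\hat Z\setminus Z)$ and $\pi(B)\sub\pi(Z)$ (the latter because $\hat Z\setminus Z\sub\hat Z$ and $\pi(\hat Z)=\pi(Z)$). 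Since $B=\pi^{-1}(\pi(B))\sub\pi^{-1}(\pi(Z))=\hat Z$, for every $w\in\pi(B)$ there is $z\in Z$ with $\pi(z)=w$, and then automatically $z\in\pi^{-1}(\pi(B))=B$; thus $\pi(B\cap Z)=\pi(B)$, so Lemma~\ref{opaque} (applied with $X=B\cap Z$) gives $\dim\big(\pi(B\setminus Z)\big)<n$. But the same argument applied to $\hat Z\setminus Z$ in place of $Z$ shows $\pi(B\setminus Z)=\pi\big(B\cap(\hat Z\setminus Z)\big)=\pi(B)$, which has dimension $n$ --- a contradiction. This proves the claim, and with it the proposition.

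I expect the only genuinely finicky point to be the reduction, inside the proof of the claim, to an $E$-saturated \emph{open} box: pulling an open box back across $\pi$ may produce a box whose faces are half-open because an $E$-class sits at an endpoint, so one must shrink carefully while keeping each $I_j/E$ infinite. This is routine weakly o-minimal bookkeeping, however, and does not affect the structure of the argument, which is essentially a repackaging of Lemma~\ref{opaque} together with the bookkeeping of observation (ii).
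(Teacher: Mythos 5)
Your proof is correct and follows essentially the same route as the paper's, which also derives the statement from Lemma~\ref{opaque}: the paper's version is just more terse, asserting the contradiction with Lemma~\ref{opaque} directly once both $\pi(X)$ and $\pi(X^c)$ are seen to have full dimension, whereas you spell out the intermediate localization to an $E$-saturated open box (and package the argument through the claim $\dim(\pi(\hat Z\setminus Z))<n$). Your write-up supplies a detail the paper leaves implicit, but the underlying idea is identical.
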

\proof If $q$ is not complete then there is an $A$-definable set $X\sub M^n$ such that $q\vdash \pi(X)$ and $q\vdash \pi(X^c)$. But then $\dim(\pi(X))=\dim(\pi(X^c))=n$, contradicting Lemma \ref{opaque}.\qed

\section{The theory $T_{conv}$}
\label{sec T-convex}
Let $T$ be an o-minimal expansion of a real closed field in a signature $\CL$. The theory $T_{conv}$ was introduced by Lowenberg and v.d. Dries in \cite{vdDrLew}:
Given $K\models T$, {\em a $T$-convex subring of $K$} is a subring $O\subseteq K$ with the property that $f(O)\subseteq O$ for any $\0$-definable continuous function $f:K\to K$. Since $O$ is convex it is a valuation ring, its maximal ideal is denoted $\mu$, the value group is $\Gamma$ and the residue field is $k$. The theory $T_{conv}$, in the language $\CL_{conv}:=\CL\cup \{O\}$, is the extension of $T$ by the axioms saying that $O$ is a proper valuation ring which is  $T$-convex.
Here are some results from \cite{vdDrLew} and \cite{vdDriesTconv}:

\begin{fact}\label{conv-L}
	\begin{enumerate}
		
		\item The theory  $T_{conv}$ has quantifier elimination relative to $T$, it is complete and weakly o-minimal. (\cite[3.10, 3.13,3.14]{vdDrLew}.\\

		\noindent Let $T_{conv,c}$ be the extension of $T_{conv}$ by the formula $c\notin O$, for a new constant $c$. Then:
		
		\item   The theory $T_{conv,c}$ has definable Skolem functions (\cite[Remark 2.4]{vdDriesTconv}).
		
		\item In models of $T_{conv,c}$ the definable closure in $\CL$ and in $\CL_{conv}$ are the same (see \cite[Lemma 2.6]{vdDriesTconv}). Moreover, every $\CL_{conv}$-definable function coincides with an $\CL$-definable function around generic points of the domain.
		
	\end{enumerate}
\end{fact}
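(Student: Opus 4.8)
Items (1) and (2) of the Fact, and the first sentence of (3), are quoted directly from \cite{vdDrLew} and \cite{vdDriesTconv}, so nothing there needs proof. The plan is therefore to explain how the ``Moreover'' clause of (3) follows from the sentence preceding it (the identity $\dcl_{\CL}=\dcl_{\CL_{conv}}$ in models of $T_{conv,c}$), using weakly o-minimal cell decomposition \cite[Theorem 4.11]{MacMaSt} and the dimension calculus of Section 2.

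Work in $\CM\models T_{conv,c}$, let $f\colon D\to K$ be $\CL_{conv}$-definable over a finite set $A$ with $D\sub K^n$, and set $A'=A\cup\{c\}$. Fix $a\in D$ generic over $A'$, i.e. with $\dim(a/A')=\dim D$; such an $a$ exists by saturation. Since $f(a)\in\dcl_{\CL_{conv}}(A'a)=\dcl_{\CL}(A'a)$ by the cited part of (3), there is an $\CL$-definable (over $A'$) partial function $g$ with $a\in\dom g$ and $g(a)=f(a)$. Then $Z=\{x\in D\cap\dom g:f(x)=g(x)\}$ is $\CL_{conv}$-definable over $A'$ and contains $a$, so $\dim Z=\dim(a/A')=\dim D$. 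Now take an $A'$-definable cell decomposition of $D$ compatible with $Z$: the cell $C$ containing $a$ cannot have dimension $<\dim D$, since it would then be an $A'$-definable set witnessing $\dim(a/A')<\dim D$; hence $C$ is a top cell, $C\sub Z$, and -- being homeomorphic via a coordinate projection to an open neighbourhood of $\pi_C(a)$ -- it is a neighbourhood of $a$ inside the top-dimensional part of $D$ on which $f$ coincides with the $\CL$-definable function $g$. This is the assertion. (The global reformulation, that $f$ agrees with an $\CL$-definable function on an $A'$-definable $D_0\sub D$ with $\dim(D\setminus D_0)<\dim D$, follows from the same idea with a little more care about uniformity.)

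There is no substantial obstacle; the only point that deserves a word is the role of the constant $c$. The identity $\dcl_{\CL}=\dcl_{\CL_{conv}}$ holds only over models of $T_{conv,c}$, so $c$ -- equivalently, one fixed element of $K\setminus O$ -- must be allowed among the parameters before invoking (3). This changes nothing: a point taken generic over the finite set $A'=A\cup\{c\}$ is in particular generic in $D$, all the dimension counts above go through verbatim, and the function $g$ thus produced is an honest $\CL$-definable function -- possibly over $A\cup\{c\}$ -- which is exactly what ``$\CL$-definable'' in the statement permits.
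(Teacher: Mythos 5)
The paper itself offers no proof of this Fact; items (1), (2), and the first sentence of (3) are bare citations to \cite{vdDrLew} and \cite{vdDriesTconv}, and the ``Moreover'' clause of (3) is stated without proof as a routine consequence. Your derivation of that clause from $\dcl_{\CL}=\dcl_{\CL_{conv}}$ is correct: pulling an $\CL$-definable $g$ out of the $\dcl$-identity at a generic $a$, noting that the agreement set $Z$ is $A'$-definable of full dimension because it contains $a$, and then cutting out a top-dimensional cell of a decomposition of $D$ compatible with $Z$ is exactly the standard argument, and your handling of the constant $c$ (working over $A'=A\cup\{c\}$ and observing that $g$ is then $\CL$-definable with $c$ as an extra parameter) is the right precaution.

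Two small remarks. First, when you assert that the top-dimensional cell $C\ni a$ is a neighbourhood of $a$ in $D$, you are implicitly using that frontiers of definable sets drop dimension in weakly o-minimal structures; this is available in the setting of \cite{MacMaSt} but deserves a word, since the informal phrase ``around generic points'' is doing some work here. Second, the paper in effect has a second, independent route to this clause via relative quantifier elimination: Lemma~\ref{TconvQE} produces, for a generic $a$ in an $\CL_{conv}$-definable set $S$, an open box $B\ni a$ with $B\cap S$ already $\CL$-definable; applying it to $S=\gra(f)$, using that $\la a,f(a)\ra$ is generic in $\gra(f)$ because $f(a)\in\dcl(a)$, gives an open box on which $f$ literally equals an $\CL$-definable function. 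That route is slightly more explicit about the neighbourhood, while yours is more elementary in that it uses only the $\dcl$-identity and cell decomposition; both are valid.
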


\emph{From now on, we add a constant symbol $c$ interpreted as a positive element outside $O$,  and still denote the new language $\CL_{conv}$.
} Throughout the text we will be using the above results without further reference.\\

\begin{defnn} Let $\CM$ be an o-minimal expansion of  real closed field $K$. \emph{A power function on $K$} is a definable endomorphism of the ordered mutiplicative group $K^{>0}$.

$\CM$ is called {\em power bounded} (generalizing ``polynomially bounded'') if  every definable function of one variable is eventually bounded by some power function. An o-minimal theory $T$ is called {\em power bounded} if every model of $T$ is power bounded.
\end{defnn}

By Miller's \cite{MilStar}, $\CM$ is power bounded if and only if it is not exponential, namely one cannot define a (necessarily monotone) isomorphism of $(K,+)$ and $(K^{>0},\cdot)$.

The structure of the residue field and the value group of $T_{conv}$ is described by the work of v.d. Dries :

\begin{fact}[Theorem A, Theorem B]\cite{vdDriesTconv}]\label{factt}
	Let $T$ be the theory of an o-minimal expansion of a real closed field $K$ and $O$ a $T$-convex subring of $R$. Then:
	\begin{enumerate}
		\item The residue field with its induced structure can be given a structure of a $T$-model, and it is stably embedded as such. In particular it is o-minimal, so by \cite{OtPePi}, any field interpretable in the residue field is definably isomorphic to the residue field itself or to its algebraic closure.
		\item If, in addition, $T$ is power bounded, then the value group is, up to a change of signature, an ordered vector space over the field of exponents of $T$ and is stably embedded as such.
	\end{enumerate}
\end{fact}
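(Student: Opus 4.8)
The plan is to follow van den Dries's analysis in \cite{vdDriesTconv}; the two parts share a common engine, namely the quantifier elimination of $T_{conv}$ relative to $T$ (Fact~\ref{conv-L}(1)), which reduces any question about an $\la K;O\ra$-definable subset of a power of $k$ or of $\Gamma$ to a question about Boolean combinations of traces of $\CL$-definable sets and of the predicate $x\in O$. Throughout one works in a saturated $\la K;O\ra\models T_{conv}$ with residue map $\mathrm{res}\colon O\to k$ and valuation $\mathrm{val}\colon K^{>0}\to\Gamma$.

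For part (1), I would first equip $k$ with an $\CL$-structure and then verify the $T$-axioms. Since $T$ is o-minimal, every $\CL(\0)$-definable $f\colon K^n\to K$ is continuous off a lower-dimensional $\CL(\0)$-definable set, and where it is bounded, $T$-convexity makes it compatible with $\mathrm{res}$; one thereby attaches to $f$ an induced partial map $k^n\to k$ and to each $\CL$-relation its $\mathrm{res}$-image, obtaining an $\CL$-expansion $k_{\mathrm{ind}}$ of the residue field. To see $k_{\mathrm{ind}}\models T$ one argues by induction on $\CL$-formulas that for a suitable tuple $\bar a$ from $O$, chosen generic over the relevant parameters, the $\CL$-type of $\mathrm{res}(\bar a)$ in $k_{\mathrm{ind}}$ is governed by, and agrees with, the $\CL$-type of $\bar a$ in $K$: atomic formulas are handled by the definition of the induced structure together with continuity of $\CL(\0)$-definable functions near generic points, Boolean steps are immediate, and the existential step uses surjectivity of $\mathrm{res}$ and saturation of $\la K;O\ra$ to lift witnesses. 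Hence $k_{\mathrm{ind}}\equiv K$ as $\CL$-structures, so $k_{\mathrm{ind}}\models T$ and in particular $k$ is o-minimal; the assertion about interpretable fields is then immediate from \cite{OtPePi}. Finally, stable embeddedness: by relative quantifier elimination every $\la K;O\ra$-definable $Y\sub k^m$ is, on $k^m$, a Boolean combination of traces of $\CL$-definable sets and of $\{x\in O\}$, all of which are $k_{\mathrm{ind}}$-definable over parameters from $k$.

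For part (2), assume in addition that $T$ is power bounded, with field of exponents $\Lambda$, and write $\Gamma$ additively. Each power function $p_\lambda\colon x\mapsto x^\lambda$, $\lambda\in\Lambda$, is $\CL(\0)$-definable and satisfies $\mathrm{val}(p_\lambda(x))=\lambda\cdot\mathrm{val}(x)$, so the family $(p_\lambda)_{\lambda\in\Lambda}$ induces scalar multiplication making $\Gamma$ an ordered $\Lambda$-vector space; thus the $K$-induced structure on $\Gamma$ is at least that rich. For the reverse inclusion I would invoke van den Dries's description of $\CL(\0)$-definable functions in a power-bounded $T$: on a cofinal $\CL(\0)$-definable set any such $f\colon K^{>0}\to K^{>0}$ satisfies $\mathrm{val}(f(x))=\lambda\cdot\mathrm{val}(x)+c$ for a fixed $\lambda\in\Lambda$ and $c\in\Gamma$, so the induced partial map on $\Gamma$ is piecewise of the form $\gamma\mapsto\lambda\gamma+c$ and hence definable in the ordered $\Lambda$-vector space language. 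An induction on formulas, again via relative quantifier elimination, then shows that every $\la K;O\ra$-definable subset of $\Gamma^m$ is definable in the ordered $\Lambda$-vector-space reduct over parameters from $\Gamma$, which yields both the identification of the induced structure up to change of signature and its stable embeddedness.

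The main obstacle is the first half of part (1): making the passage from $\CL(\0)$-definable functions on $K$ to functions on $k$ precise and checking that it realizes $T$. Because $\mathrm{res}$ is far from injective on $O$, all of the bookkeeping must be carried out modulo $\mu$, with continuity of $\CL(\0)$-definable functions as the only compatibility tool — one must verify that the induced interpretation is independent of the chosen lifts, that it restricts correctly to the ring operations, and that quantifiers transfer along $\mathrm{res}$. By comparison, once the power-bounded structure theory of definable functions is in hand, part (2) is routine.
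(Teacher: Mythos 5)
The paper does not prove Fact~\ref{factt}: it is a direct citation of Theorems A and B of \cite{vdDriesTconv}, so there is no internal argument to compare against. Your sketch of part (2) tracks van den Dries closely: the power functions give $\Gamma$ its $\Lambda$-vector-space structure, and the structure theory of $\CL$-definable one-variable functions in power-bounded o-minimal theories, combined with relative QE, shows the induced structure on $\Gamma$ is no richer and yields stable embeddedness. For part (1), though, you take a harder route than he does. Van den Dries, building on \cite{vdDrLew}, first establishes that there is a maximal $\CL$-elementary substructure $K'\prec K$ with $K'\subseteq O$ and that $\mathrm{res}|_{K'}$ is a bijection onto $k$; transporting the $\CL$-structure along this bijection makes $k$ a $T$-model at once, with no induction on formulas, and the identification of the transported structure with the $\la K;O\ra$-induced structure on $k$, together with stable embeddedness, then falls out of relative QE. Your direct induction-on-formulas plan must confront precisely the step this sidesteps: in the existential clause, given $T\vdash\forall\bar{x}\,\exists y\,\phi$ and a lift $\bar{a}\in O^n$, you need a witness $b$ lying in $O$ and not merely in $K$, and $T$-convexity alone does not hand you this without showing the relevant Skolem function is bounded near a suitably generic lift $\bar a$. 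You correctly flag this bookkeeping as the main obstacle; the maximal-substructure route dissolves it, which is why it is the one van den Dries takes.
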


We shall also use the following result from J. Tyne's PhD thesis \cite[p.94]{Tyne}:
\begin{fact}\label{power bounded} Let $T$ be power bounded and o-minimal and $K{conv}\models T_{conv}$. Then every $\CL_{conv}$-definable subset of $K$ ia already definable in the valued field, namely it is a boolean combination of balls and intervals.
\end{fact}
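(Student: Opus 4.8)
\medskip

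\noindent\emph{Proof strategy.} The plan is to reduce, via quantifier elimination relative to $T$, to analysing the sets $\{x: f(x)\in O\}$ for $\CL$-definable one-variable functions $f$, and then to compute these using the o-minimal preparation theorem for power-bounded structures, which is the only place where power-boundedness is used. Writing $v\colon K^{\times}\to\Gamma$ for the valuation: by Fact~\ref{conv-L}(1) every $\CL_{conv}$-definable $X\subseteq K$ is a finite Boolean combination of $\CL$-definable subsets of $K$ and of sets $\{x: f(x)\in O\}$, where $f$ ranges over $\CL$-definable partial functions of one variable (relative quantifier elimination produces only $O(t)$ for $\CL$-terms $t$, but since $v(f)\ge v(g)$ is equivalent to $f/g\in O$ one may as well allow any $\CL$-definable $f$). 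By o-minimality of $T$ the $\CL$-definable subsets of $K$ are finite unions of points and intervals, so it is enough to show that each $\{x: f(x)\in O\}$ is a Boolean combination of balls and intervals.

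For this -- the crux of the argument, and where power-boundedness enters -- I would invoke the preparation theorem for power-bounded o-minimal expansions of a real closed field (this is, in effect, how the result is obtained in \cite{Tyne}): there is a finite partition of $\dom(f)$ into points and open intervals such that on each interval-cell $J$ either $f$ vanishes identically, in which case $\{x\in J: f(x)\in O\}=J$, or, focused at one of its endpoints $a\in K\cup\{\pm\infty\}$, one has
$$|f(x)| \;=\; |c|\cdot|x-a|^{\lambda}\cdot|u(x)| \qquad (x\in J),$$
with $|x|^{\lambda}$ in place of $|x-a|^{\lambda}$ when $a=\pm\infty$, where $\lambda$ belongs to the field of exponents $\Lambda$ of $T$, $c\in K^{\times}$, and $u$ is an $\CL$-definable function which (after absorbing a constant into $c$ and refining the partition) takes values in $(1/2,2)$, so that $v(u(x))=0$ on $J$. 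In the latter case $f$ is nowhere zero on $J$, since $a\notin J$; the finitely many isolated zeros of $f$ sit among the point-cells. Hence $v(f(x))=v(c)+\lambda\cdot v(x-a)$ for all such $x\in J$.

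It then remains to read off the level set on each $J$:
$$\{x\in J: f(x)\in O\} \;=\; \{x\in J: \lambda\cdot v(x-a)\ge -v(c)\}.$$
If $\lambda=0$ this is $J$ or $\emptyset$. If $\lambda\neq 0$, then by Fact~\ref{factt}(2) the value group $\Gamma$ is, up to a change of signature, an ordered $\Lambda$-vector space, so $-v(c)/\lambda$ lies in $\Gamma$ and equals $v(d)$ for some $d\in K^{\times}$; the set is then $\{x\in J: v(x-a)\ge v(d)\}$ when $\lambda>0$ -- a closed ball intersected with an interval -- and $\{x\in J: v(x-a)\le v(d)\}$ when $\lambda<0$ -- an interval with an open ball removed -- with the ball centred at $0$ in the case $a=\pm\infty$. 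Together with the point-cells (single points, hence degenerate intervals), this shows $\{x:f(x)\in O\}$, and therefore $X$, is a Boolean combination of balls and intervals.

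I expect the only genuine obstacle to be the preparation step: getting $v(f(x))$ to be \emph{exactly} a $\Lambda$-affine function of $v(x-a)$ on each cell (not merely trapped between two such), which is precisely what power-boundedness provides -- together with $\Gamma$ being a $\Lambda$-vector space, used above to solve $\lambda\,v(x-a)=-v(c)$. Without power-boundedness the argument breaks: already for $f=\exp$ the quantity $v(\exp x)$ outgrows every $\Lambda$-linear function of $v(x)$, and in a sufficiently saturated model, with $O$ chosen larger than the convex hull of the prime model, $\{x:\exp x\in O\}$ is a half-line whose endpoint is a cut not definable by any Boolean combination of balls and intervals.
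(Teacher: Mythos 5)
The paper does not prove this statement; it is quoted as a Fact with a reference to Tyne's thesis, so there is no in-text argument to compare against. Your reconstruction is sound and is the standard route to the result: reduce by relative quantifier elimination (Fact~\ref{conv-L}(1)) to level sets $\{x : t(x) \in O\}$ for $\CL$-terms $t$ together with ordinary $\CL$-definable sets, then apply the one-variable preparation (valuation) property of power-bounded o-minimal structures to write $|t(x)| = |c|\cdot|x-a|^{\lambda}\cdot|u(x)|$ on each cell with $u$ a definable unit, so that $v(t(x))$ is a $\Lambda$-affine function of $v(x-a)$ and the level set is a ball-interval; the $\Lambda$-vector-space structure of $\Gamma$ from Fact~\ref{factt}(2) lets you solve for the radius. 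The only point I would flag for care is the phrase ``focused at one of its endpoints'': the preparation partition must in general introduce \emph{new} division points (e.g. preparing $(x-1)(x-c)$ with $v(c)<0$ on $(1,c)$ requires splitting the interval) so that each resulting cell has its focal point at an endpoint, and the focal point need not belong to the original cell boundary you started with; since your cells are the output of the preparation theorem rather than given in advance, this is a matter of phrasing rather than a gap. Your closing observation that for exponential $T$ the set $\{x : \exp x \in O\}$ already violates the conclusion correctly isolates where power-boundedness is used and matches the warning implicit in Remark~(5) of the Introduction.
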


To sum up the last facts in few words, if we denote $k_{ind}$ the induced structure on $k$ (in the signature $\CL_{ind}$) then $k_{ind}$ is o-minimal, and up to a change of signature elementarily equivalent to $R$. In case $T$ is power bounded, the value group, too, is an o-minimal vector space with no additional structure and the $1$-variable definable subsets of $K$ are the same as in the underlying real closed valued field.

\textbf {\em From now on, we denote the induced structure on $k$ by $k_{ind}$ and its language by $\CL_{ind}$.}\\

We finally note:
\begin{lemma}\label{exp}
Let $T$ be an exponential o-minimal expansion of a real closed field, $T_{conv}$ its expansion by a $T$-convex valuation ring. If $K\models T$ then $\la K/O;<,+\ra $ is in definably isomorphic with $\la \Gamma;<,+\ra$.
\end{lemma}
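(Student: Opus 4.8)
The plan is to transport the additive structure of $K/O$ onto $\Gamma$ by means of the exponential. Write $\exp\colon\la K;+\ra\to\la K^{>0};\cdot\ra$ for the definable (monotone, hence increasing) isomorphism guaranteed by Miller's dichotomy \cite{MilStar}, with inverse $\log$; I will assume, naming finitely many constants if necessary, that $\exp$ is $\0$-definable, so that the defining property of a $T$-convex ring applies to $\exp$ and to $\log$. Let $v\colon K^\times\to\Gamma$ denote the valuation. The map I propose is $\phi\colon K\to\Gamma$, $\phi(x)=v(\exp(-x))$. First I would record the routine facts: $\phi$ is a group homomorphism (since $\exp$ and $v$ are), it is surjective (since $\exp$ maps onto $K^{>0}$ and $v$ onto $\Gamma$), and it is $\CL_{conv}$-definable, as are the sorts $K/O$ and $\Gamma$. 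Thus the lemma reduces to the single claim $\ker\phi=O$: this yields a definable group isomorphism $\bar\phi\colon\la K/O;+\ra\to\la\Gamma;+\ra$, and the order will be handled along the way.

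For $O\subseteq\ker\phi$: if $x\in O$ then $\exp(x),\exp(-x)\in O$ by $T$-convexity, so $\exp(-x)$ is a unit of $O$ and $\phi(x)=v(\exp(-x))=0$. For the reverse inclusion, since $\ker\phi$ and $O$ are both subgroups of $\la K;+\ra$ it suffices to show that a positive $x\notin O$ --- that is, a positive infinite element --- satisfies $x\notin\ker\phi$, and for this the key point is $\exp(x)\notin O$. Here is the trick I would use: if $\exp(x)\in O$ then, since $x>0$ gives $\exp(x)>1$, the element $y=\sqrt{\exp(x)-1}$ lies in $O$, and applying $T$-convexity to the $\0$-definable continuous function $t\mapsto\log(t^2+1)$ (defined and continuous everywhere because $t^2+1\ge 1>0$) produces $x=\log(\exp(x))=\log(y^2+1)\in O$, a contradiction. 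Hence $\exp(x)$ is positive infinite, $\exp(-x)=\exp(x)^{-1}$ is a positive infinitesimal, and $\phi(x)=v(\exp(-x))>0$; in particular $x\notin\ker\phi$. I expect this to be the only step requiring genuine content --- the reason for the trick is to extract it directly from $T$-convexity, avoiding any growth-rate comparison between $\exp$ and the power functions.

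Finally, the computation just made also shows that $\bar\phi$ sends the positive cone of $K/O$ (the classes of positive infinite elements) into the positive cone of $\Gamma$, and a bijective homomorphism of ordered abelian groups that does so is automatically an isomorphism of ordered groups. Hence $\bar\phi\colon\la K/O;<,+\ra\to\la\Gamma;<,+\ra$ is the required definable isomorphism. I would also note explicitly the (harmless) reduction to $\exp$ being $\0$-definable, since that is precisely what makes $T$-convexity applicable to $\exp$ and $\log$.
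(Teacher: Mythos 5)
Your proof is correct and follows essentially the same route as the paper: both transport the additive structure of $K/O$ onto $\Gamma$ via the exponential, the key content in each case being that $\exp$ carries $O$ precisely onto the positive units of $O$ (equivalently, that $\ker(v\circ\exp)=O$). Your $\log(t^2+1)$ device is a nicely explicit way to apply $T$-convexity despite $\log$ not being globally defined on $K$, a detail the paper handles more tersely by invoking $T$-convexity together with the growth estimate $\exp(x)>x$.
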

\begin{proof} We first note that $\exp(O)=O_{>0}\setminus \mu$.  Indeed, the right-to-left inclusion follows from the fact that $O$ is $T$-convex, and $exp(x)>x$. For the converse, assume for contradiction that  $a=\exp(b)\in \mu$ for some $b\in O$. Then $a^{-1}=exp(-b)\notin O$, contradicting the fact that $O$ is $T$-convex.

Since $O\setminus \mu=O^{\times}$ it follows that $exp$ induces a definable isomorphism between $\la K/O;<, +\ra$ and $\la K_{>0}/O_{>0}^\times;<,+\ra $, which is isomorphic to $\la \Gamma;<,+\ra$.
\end{proof}
 For example, in this case the residue field is internal to the value group, and the latter's theory (relative to $T$) is not known (see \cite[Clause 6.5]{vdDriesTconv}).

\vspace{.2cm}

It will be convenient to define in $\CL_{conv}$ the relation $v(x)>v(y)$ if $x/y\in O\land y/x
\notin O$. Since $O=\{x: v(x)\ge v(1)\}$, quantifier elimination for $T_{conv}$ in $\CL_{conv}$ is equivalent to quantifier elimination for $\CL$ expanded by the the above binary predicate, and we will use the two languages interchangeably.

The following could probably be read off \cite{vdDriesTconv}, but we need the  explicit formulation below. It is a consequence of quantifier elimination relative to $T$:

\begin{lemma}\label{TconvQE}
	Let $S\subseteq K^n$ be $\CL_{conv}$-definable, $a\in S$ generic over the parameters defining $S$ (in the weakly o-minimal sense). Then there exists an open box $B\ni a$ (defined possibly over  new parameters) such that $B\cap S$ is $\CL$-definable.
\end{lemma}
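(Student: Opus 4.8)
The plan is to deduce this from quantifier elimination for $T_{conv}$ relative to $T$ (Fact \ref{conv-L}(1)), using the reformulation mentioned just above: we may take the language to be $\CL$ augmented by the binary predicate $v(x)>v(y)$. First I would put $S$ in a quantifier-free form: write $S$ as a finite boolean combination of sets of the form $\{f(x) = 0\}$, $\{f(x) > 0\}$ for $\CL$-terms (equivalently $\CL$-definable continuous functions) $f$, together with sets of the form $\{v(g(\bar x)) > v(h(\bar x))\}$ for $\CL$-definable functions $g,h$. Since a finite boolean combination of $\CL$-definable sets is $\CL$-definable, the only terms that can fail to be $\CL$-definable near $a$ are the valuational atoms $\{v(g(\bar x)) > v(h(\bar x))\}$ and their negations.

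The key point is then to show that each such valuational atom agrees, on a small box around the generic point $a$, with an $\CL$-definable set. Consider the $\CL$-definable function $\rho(\bar x) = g(\bar x)/h(\bar x)$ (on the $\CL$-open set where $h$ does not vanish; if $a$ lies outside that set one treats the atom trivially using continuity, or swaps the roles of $g$ and $h$). The atom $v(g)>v(h)$ is exactly $\{\rho(\bar x) \in \mu\}$, i.e. $\{v(\rho(\bar x)) > 0\}$. Now use Fact \ref{conv-L}(3): the $\CL_{conv}$-definable function $\rho$ (it is already $\CL$-definable) at the generic point $a$ — more precisely, since $a$ is generic and $\rho$ is continuous, $\rho(a)$ is a well-defined element of $K$. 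If $\rho(a)\in\mu$, then since $\mu$ is open in the order topology... — here is where I must be careful: $\mu$ is convex but its endpoints are not in $K$, so $\mu$ is relatively open, and continuity of $\rho$ gives an $\CL$-definable box $B\ni a$ with $\rho(B)\sub \mu$, on which the atom is identically true. Symmetrically if $\rho(a)\notin O$ (equivalently $v(\rho(a))<0$), the atom is identically false on a box. The borderline case is $v(\rho(a)) = 0$, i.e. $\rho(a)\in O\setminus\mu = O^\times$: here $\rho(a)$ has a nonzero residue $r\in k$, and $v(\rho(\bar x)) > 0$ fails at $a$; I want a box on which it fails identically, which again follows because $\{v(y)\le 0\} = K\setminus\mu$ is relatively open and $\rho$ is continuous. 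In every case the atom coincides on an $\CL$-definable box around $a$ with $\emptyset$ or the whole box.

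Having handled each atom, I would take $B$ to be the (finite) intersection of the boxes produced for the finitely many valuational atoms appearing in the quantifier-free description of $S$; shrinking to a single open box containing $a$, possibly over new parameters (the endpoints of $\mu$ or of the images, which may be external), on $B$ the set $S$ becomes a boolean combination of $\CL$-definable sets and hence is $\CL$-definable. One extra bookkeeping point: genericity of $a$ over the parameters of $S$ ensures $a$ avoids the $\CL$-definable lower-dimensional "bad" sets where some $h$ vanishes or where $\rho$ is discontinuous, so the continuity arguments apply; this is exactly the role the hypothesis "$a$ generic" plays.

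The main obstacle I anticipate is the careful treatment of the topology of $\mu$ and $O$ inside a weakly o-minimal $K$: one cannot say "$\mu$ is open" naively, and must phrase everything in terms of the convexity of $\mu$ together with the fact that a continuous $\CL$-definable function pulls back a convex relatively-open set to a relatively-open set, producing a genuine $\CL$-definable open box. The other place requiring a little care is the case distinction $v(\rho(a))<0$, $=0$, $>0$ and making sure that in the $=0$ case (residue nonzero) one still gets an $\CL$-definable box on which the valuational atom is constant — this works because what matters for the atom $v(g)>v(h)$ is only whether $\rho(\bar x)\in\mu$ or not, and $O^\times$ and $K\setminus O$ are both on the "not in $\mu$" side. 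None of the steps should require genuine computation beyond invoking quantifier elimination and continuity.
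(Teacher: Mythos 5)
Your proposal is correct and follows essentially the same route as the paper: use quantifier elimination relative to $T$ to write $S$ as a boolean combination of $\CL$-definable sets and valuational atoms, observe that $O$ (equivalently $\mu$) is clopen in the order topology, and use continuity of $\CL$-terms at the generic point $a$ to find a box on which each valuational atom is constant. The only cosmetic difference is that the paper normalises the valuational atoms as $t(\bar x)\in O$ rather than $v(g(\bar x))>v(h(\bar x))$, which avoids the division by $h$ and the attendant bookkeeping you flag; also note that since $\rho=g/h$ is already an $\CL$-term you need only o-minimal continuity at generics, not Fact \ref{conv-L}(3).
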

\begin{proof} We first note that the associated valuation induces the same topology on $K$ as the order topology.
	Quantifier elimination implies that any definable set is of the form
	\[
	\bigcup_i \left ( \bar T_{i}(\bar x)\cap \left ( \bigcap_j (t_{i,j}(\bar x)\in O)^{\pm 1} \right )\right )
	\]
	where $\bar T_i$ are $\CL$-definable sets, $t_{i,j}(\bar x)$ are $\CL$-terms and $(x\in O)^{-1}$ means $x\notin O$. It suffices to verify the lemma for sets of the form
	\[
	\bar T(\bar x)\cap \left ( \bigcap_j (t_{j}(\bar x)\in O)^{\pm 1} \right )
	\]

	The set $O$ is clopen in $K$ and since $a$ is generic with respect to the o-minimal structure, each $t_{i,j}$ defines a continuous function at $a$. It follows that $a$ is an interior point of both  the preimage of $O$ and its complement under each $t_j$. Thus there is an open box $B\sub K^n$ containing $a$ such that $B$ is contained in the set $ \bigcap_j (t_{j}(\bar x)\in O)^{\pm 1}$. We have $T\cap B= S\cap B$, so it is $\CL$-definable.
\end{proof}

\section{Definable quotients of $K$}

In this section we classify, locally, definable quotients of $K$ itself. The first lemma will eventually allow us to reduce much of the work from quotients of the form $K^n/E$ to quotients of the form $(K/E)^n$.

\begin{lemma}\label{1-dim}
	Let $\mathcal M$ be a weakly o-minimal structure with definable Skolem functions. Let $S=X/E$, for some definable $X\subseteq K^n$ and $E$ a definable equivalence relation. If $S$ is infinite then there exists an infinite $S'\subseteq S$ (definable over additional parameters) such that $S'$ is in definable bijection with $J/E'$ for some interval $J\subseteq M$ and $E'$ a definable equivalence relation on $M$.
\end{lemma}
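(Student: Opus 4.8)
The plan is to peel off a one-dimensional slice of $X$ that already meets infinitely many $E$-classes, and then upgrade it to an interval quotient using cell decomposition. More precisely, I would argue as follows.

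\medskip

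\noindent\textbf{Step 1: Reducing to a one-dimensional $X$.} Since $S=X/E$ is infinite, there are infinitely many $E$-classes meeting $X$. Using a Skolem function $\sigma$ picking a representative in each $E$-class that meets $X$, the set $X_0=\sigma(X)$ is a definable transversal and is still infinite; moreover $X_0/E\cong X_0$ (each class meets $X_0$ in one point) and it embeds into $S=X/E$. So we may replace $X$ by $X_0$ and assume $E$ is trivial on $X$, i.e.\ we are just looking for a definable infinite subset of $X_0$ in definable bijection with an interval quotient. But this is too weak on its own, because the natural "interval" living inside $X_0\subseteq M^n$ need not be an honest interval of $M$. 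So instead I would keep $X$ general but first \emph{reduce its dimension to $1$}: by cell decomposition (\cite[Theorem 4.11]{MacMaSt}) write $X$ as a finite union of cells; since $S$ is infinite, some cell $C$ still meets infinitely many $E$-classes, and replacing $X$ by $C$ we may assume $X$ is a cell, homeomorphic via a coordinate projection to an open box $B\subseteq M^k$. If $k\geq 1$ we will shrink down to dimension $1$.

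\medskip

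\noindent\textbf{Step 2: Finding a one-dimensional set meeting infinitely many classes.} With $X$ a cell of dimension $k$ and $E$ an equivalence relation on it with infinitely many classes, consider the definable map $X\to X/E$. If every line segment inside (the box image of) $X$ parallel to a fixed coordinate axis met only finitely many $E$-classes, then by a fibering argument — slicing $X$ along one coordinate at a time and using the finiteness of classes on each slice together with subadditivity of dimension — one would conclude that $X/E$ is finite, a contradiction. Concretely, for each coordinate direction $i$ and each fiber $a$ of the projection off the $i$-th coordinate, let $n_i(a)$ be the number of $E$-classes met by the corresponding segment; if $n_i(a)$ is finite for all $i$ and all $a$, induction on $k$ (reducing the box dimension by one each time, exactly as in the proof of Corollary \ref{dp3}) shows $|X/E|$ is bounded, contradiction. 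Hence there is some coordinate direction $i$ and some fiber $a$ such that the segment $J_a=\{x\in X: \pi_i(x)=a\}$, which is (in the box coordinates) a subinterval of $M$, meets infinitely many $E$-classes. This $J_a$ together with the restriction $E'$ of $E$ to it is a definable equivalence relation on an interval of $M$ with infinitely many classes, and $J_a/E'$ embeds definably into $S$. That embedding's image is the desired infinite $S'\subseteq S$.

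\medskip

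\noindent\textbf{Step 3: Assembling the bijection.} The set $J_a$ is, via the coordinate projection, in definable bijection with an interval $J\subseteq M$; transporting $E'$ along this bijection gives a definable equivalence relation on $J$, and $J/E'\cong J_a/(E{\restriction}J_a)$ is the infinite subset $S'$ of $S$ we want. Dimension being preserved under definable bijection (\cite[Theorem 4.7]{MacMaSt}) keeps everything one-dimensional. This finishes the proof.

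\medskip

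\noindent\textbf{Main obstacle.} The delicate point is Step 2: ensuring that \emph{some} one-dimensional coordinate slice of the cell already meets infinitely many $E$-classes. A priori, $E$ could be arranged so that every axis-parallel line inside the box meets only finitely many classes, while the classes still vary in a "diagonal" or genuinely $k$-dimensional fashion. The way around this is that in weakly o-minimal structures a definable family of subsets of $M$ (here, the slices of an $E$-class along a line) is a finite union of convex pieces with uniformly bounded number of components, so "finitely many classes on every line in every direction" really does propagate, via the cell/box structure and subadditivity of dimension, to "finitely many classes total" — which contradicts $|S|=\infty$. Making that propagation argument precise, rather than the bookkeeping in Steps 1 and 3, is where the real content lies.
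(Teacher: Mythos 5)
Your Step 2 is where the argument breaks: it is \emph{not} true that if every axis-parallel line segment inside the box meets only finitely many $E$-classes then $X/E$ is finite. Take $X=(0,1)^2$ and let $E$ have as classes the singletons $\{(a,a)\}$ for each $a\in(0,1)$, together with the whole off-diagonal set $\{(x,y)\in X : x\neq y\}$ as one class; this $E$ is definable (the formula is $(x_1=y_1\wedge x_2=y_2\wedge x_1=x_2)\vee(x_1\neq y_1\wedge x_2\neq y_2)$), $X/E$ is infinite, and yet every vertical line and every horizontal line meets at most two classes. So the coordinate slice $J_a$ you are after need not exist, and no fibering or subadditivity argument can conjure it up, because the one-dimensional set that actually carries infinitely many classes here --- the diagonal --- is not axis-parallel. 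The ``propagation'' you flag as the real content does not hold.

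The paper's proof also begins by examining the fibers of a coordinate projection $\pi:X\to M^{n-1}$, and if some fiber meets infinitely many classes it finishes essentially as in your Step 3. The difference is in the complementary case: rather than arguing for a contradiction, it uses Skolem functions \emph{fiberwise} to choose, for each $w\in\pi(X)$, one representative in the fiber over $w$ for each of the finitely many $E$-classes meeting it. This produces a definable $Z\subseteq X$ with $Z/E\cong X/E$ and, crucially, $Z$ finite-to-one over $\pi(X)$, so $\dim Z\leq n-1$. Cell-decomposing $Z$, projecting a cell meeting infinitely many classes homeomorphically onto an open cell in $M^{\dim Z}$, transporting $E$, and inducting on $n$ eventually yields the interval $J\subseteq M$; in the example above $J$ is the image of the diagonal cell, which no coordinate-slicing scheme can reach. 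Your Step 1 already considered a transversal but discarded it for a tangential reason; the point you missed is that taking the transversal \emph{fiberwise}, precisely in the case when every fiber meets only finitely many classes, is what forces the dimension to drop and drives the induction.
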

\begin{proof}
	We use induction on $n$, where the case $n=1$ is trivial. Let $W:=\pi(X)$ be the projection of $X$ onto the last $n-1$ coordinates (assuming, without loss of generality, this projection has infinite image). If for some $w\in W$ the fibre $\pi^{-1}(w)$ meets infinitely many $E$-classes then take $S'=X^w=\{x\in M:\la x,w\ra\in X\}$ and $E'$ the relation on $S'$ obtained from $\pi^{-1}(w)$.

 So we may assume that $|\pi^{-1}(w)/E|<\infty$ for all $w\in\pi(X)$. Since $\mathcal M$ has  definable Skolem function we can find, uniformly for each $w\in W$ a finite set of representatives $Y(w)$ of $\pi^{-1}(w)/E$. More precisely, there exists a definable set $Z\sub K^n$ such that $\pi(Z)=\pi(X)$
and for each $w\in \pi(X)$, $Z\cap \pi^{-1}(w)$ has exactly one representative for each $E$-class intersecting $\pi^{-1}(w)$.

	
	 By weak o-minimality, there exists a cell decomposition $Z_1,\dots, Z_l$ of $Z$ such that each $Y_i$ is definably homeomorphic to an open cell $C_i\subseteq M^{\dim(Z_i)}$. Assume, without loss, that $|Z_1/E|=\infty$. If $\pi:Z_1\to C_1$ is the definable homeomorphism let $E_1:=\pi(E|Z_1)$ and we conclude by induction.
\end{proof}

We now fix an infinite field $\la \CF;+,\cdot\ra $ interpretable in $K$.  We fix an interval $J\subseteq K$ and a definable equivalence relation $E$ on $J$ as provided by the above lemma. As we show next, after possibly shrinking $J$, the set $J/E$ can be definably  embedded into one of four weakly o-minimal structures.

\begin{theorem}\label{K/E}
	Let $K\models T_{conv}$, where $T$ is power bounded, and let  $E$ be a definable equivalence relation on $J\sub K$ with infinitely many classes. Then there exists a definable ordering on $J/E$ such that $\la J/E,<\ra$, with the induced structure,   is weakly o-minimal, and there exists an $<$-interval $I\subseteq J/E$ which is in definable bijection with an interval in $K$, in $\Gamma$, in $k$ or in $K/O$.
\end{theorem}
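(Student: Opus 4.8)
The plan is to understand the equivalence relation $E$ on $J\subseteq K$ via the one-dimensional geometry of $T_{conv}$, and to exhibit on a subinterval a bijection with one of the four ordered structures. First I would use Lemma~\ref{TconvQE}: taking $a\in J$ generic, there is an open box (interval, since $n=1$) $B\ni a$ on which $J$ and each relevant $E$-definable slice are $\mathcal L$-definable, so shrinking $J$ we may work with an $\mathcal L_{conv}$-definable $E$ that is $\mathcal L$-definable on the generic interval but whose classes are controlled by the valuation. The first real step is to show that, after shrinking, $E$ is \emph{convex} on $J$: its classes are convex subsets of $J$. This should follow from weak o-minimality applied to the definable family $\{[a]_E : a\in J\}$ — each class is a finite union of convex sets and intervals/balls by Fact~\ref{power bounded}, and by a genericity/shrinking argument (choosing the first convex component, as in the proof of Lemma~\ref{dp2}) we may assume each class is a single convex set. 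Then $J/E$ inherits the order from $J$, and by the discussion preceding Lemma~\ref{opaque} it is weakly o-minimal with weakly o-minimal theory.

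Next, with $E$ convex on $J$, I would classify the possible ``shapes'' of the classes using the valuation-theoretic structure of convex subsets of $K$ in $T_{conv}$ with $T$ power bounded (Fact~\ref{power bounded}): a convex definable set is a boolean combination of balls and intervals, so up to shrinking each $E$-class is, uniformly in the parameter, one of a few types — a genuine interval with endpoints in $K$ (or $\pm\infty$), an open/closed ball $v(x-a)>\gamma$ type set, a ``valuation interval'' $v(x-a)\ge v(t)$, or a coset-like set $x-a\in O$. Correspondingly the quotient $J/E$ on a subinterval should be in definable bijection with: an interval of $K$ itself (when classes are points, or collapse to an interval parametrization), an interval of $k$ (when classes are the fibers of the residue map on a ball, $x\mapsto \bar x$), an interval of $\Gamma$ (when classes are level sets of $x\mapsto v(x-a)$), or an interval of $K/O$ (when classes are cosets of $O$, or more precisely fibers of $x\mapsto (x-a)+O$). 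The work here is to make this case division uniform over a subinterval and to verify that in each case the map from $J$ to the relevant structure descends to a definable \emph{bijection} on an interval of $J/E$, compatible with the orders; this is where I would invoke definability of dimension and the cell-decomposition results from Section~2 to pass to a subinterval on which exactly one type occurs and the induced map is injective.

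The main obstacle, I expect, is the passage from ``the $E$-classes look locally like one of four types'' to ``$J/E$ embeds, on an interval, into exactly one of $K$, $k$, $\Gamma$, $K/O$ as a weakly o-minimal structure with its \emph{induced} structure.'' Two subtleties arise: (i) the classes could change type along $J$, or be hybrids (e.g.\ an interval of balls), so one must argue that on a generic subinterval the type stabilizes — this should come from a weak-o-minimality pigeonhole on the definable family of classes, shrinking finitely often; and (ii) having a definable bijection of sets is not yet an embedding of \emph{structures with induced structure} — one must check that the bijection and its inverse transport definable sets to definable sets, i.e.\ that the induced structure on the interval $I\subseteq J/E$ coincides with the induced structure from the target. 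For the order-compatibility and for reducing to an interval this is routine, but identifying the induced structure is the crux and is presumably where the theorem's proof does its real work; the cleanest route is to show the bijection is the restriction of one of the canonical quotient maps ($\mathrm{id}$, residue, $v(\cdot-a)$, or $\cdot+O$) composed with an $\mathcal L$-definable homeomorphism, so that stable embeddedness of $k$ and $\Gamma$ (Fact~\ref{factt}) and the self-evident cases of $K$ and $K/O$ give the induced-structure claim for free.
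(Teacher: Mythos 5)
Your overall strategy---convexify $E$, then use the valuation-theoretic description of one-dimensional $\CL_{conv}$-definable sets to classify the classes and read off one of the four targets---is the right one and matches the paper in outline. The convexification step is exactly the paper's: replace $E$ by the relation identifying first convex components, giving a convex $E'$ on a definable $D\sub J$ with $D/E'=J/E$. You also correctly identify Fact~\ref{power bounded} (Tyne) as the key input. (The detour through Lemma~\ref{TconvQE} to locally $\CL$-define $E$ as a subset of $K^2$ near a generic pair is not needed; Tyne's result applies directly to each $1$-variable class.)

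The genuine gap is in the middle step, where you pass from ``the classes have a few valuational shapes'' to a definable bijection from an interval of $J/E$ into one of the four structures. You list candidate shapes (points, balls, cosets of $O$, level sets of $v$, \ldots) and informally identify each with a target, but you do not produce a uniform \emph{definable coding} of the classes, which is the heart of the proof. The paper shows that after shrinking, every class is a \emph{ball-interval} $B_1\,\square_1\, x\,\square_2\, B_2$---a convex set bounded below and above by balls---and, crucially, that the two end-balls $B_1[x]$, $B_2[x]$ are \emph{definable from the class itself} (the Claim in the proof). This yields a definable injection $[x]\mapsto B_1[x]$ from $J/E$ into a uniform family of balls; the four targets then fall out of a single dichotomy on the composed map $[x]\mapsto\mathrm{radius}(B_1[x])\in\Gamma$: if it has an infinite fibre over some $\gamma$ one gets an injection of an interval into $K/O$ or into $k$ (according to whether the end-ball is closed or open), and if all fibres are finite one gets a finite-to-one, hence by linearity injective-on-an-interval, map into $\Gamma$; the case $I\hookrightarrow K$ is disposed of first by assuming $E$ is eventually non-trivial. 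Your pictures for the $k$ and $\Gamma$ cases (``fibers of the residue map on a ball,'' ``level sets of $v(x-a)$'') don't match this mechanism and, as stated, would not produce a definable injection on an interval of $J/E$. Without the end-ball coding the argument does not close.

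Your subtlety (ii) is a non-issue for this particular theorem: the statement asks only for a definable bijection between an interval of $J/E$ and an interval in $K$, $k$, $\Gamma$, or $K/O$---not for an isomorphism of structures with induced structure. The identification of the induced structures on $k$ and $\Gamma$ (Fact~\ref{factt}) and the analysis of $K/O$ (Section 5) are invoked later in the paper, not here.
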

\proof
By weak o-minimality each $E$-equivalence class is a finite union of convex sets. So by replacing $E$ with the equivalence relation $E'$, choosing the first component in each $E$-class, we get a convex equivalence relation with $D/E'=J/E$ for some definable $D\subseteq J$. So we assume that $E=E'$. Thus $J/E$ is linearly ordered and weakly o-minimal.

If $E$ is the identity on an infinite set we can find a convex set $I\sub J$ where this is true,  and then $I/E\hookrightarrow K$, so  we are done. So we assume that this is not the case. It follows that for all but finitely many classes, $\sup([x])$ is not an element of $K$ (for otherwise the set $\exists y (x=\sup[y])$ is an infinite discrete set in $K$). For the same reason $\inf([x])$ is not an element of $K$ for all but finitely many of the $E$-classes. By shrinking $J$ we may assume that  for all $x\in J$, the class $[x]$ is a bounded convex set without supremum or infimum.

For a valuational ball $B$ (either open or closed) write $x<B$ if $x<y$ for all $y\in B$. We write $x\le B$ if $x<B\lor x\in B$.  Recall (see, e.g., \cite{Mellor}) that a \emph{disc cut} is a set of the form $x \square B$ where $\square \in \{<,\le \}$ and $B$ is a definable ball (either open or closed) or a point in $K$. By Tyne's work, Fact \ref{power bounded}, every $\CL_{conv}$-definable subset of $K$ is a finite boolean combination of disc cuts. Since the $E$-classes are convex it follows from our assumptions that they are intersections of two disc cuts where $B$ is actually a ball (and not a point).

Let us call a set of the form $B_1\square_1 x \square_2 B_2$, where $\square_i\in \{<,\le\}$,  \emph{a ball-interval}. The \emph{form} of  a ball interval is the data specifying  which of the balls $B_i$ is open, and which of the inequalities is weak. \\

Below we use valuational radius for balls, namely for $\gamma\in \Gamma$,  an open ball of radius $\gamma$ is a set of the form $\{x\in K:v(x-x_0)>\gamma\}$ and a closed ball is of the form $\{x\in K:v(x-x_0)\geq \gamma\}$.

\vspace{.2cm}

\noindent\textbf{Claim}. If $C$ is a ball interval $B_1\square_1 x \square_2 B_2$  then $B_1$ and $B_2$ are uniquely determined by $C$. More explicitly, $B_1$ and $B_2$ are definable over a code for $C$.

\proof
	We deal first with the case where $\square_1$ is $\leq$. If $B_1$ is a ball of radius $\gamma$ (either closed or open) then $$\gamma=\inf\{\delta: (\forall x\in C \exists y \in C)(y<x \land B_\delta(y)\subseteq C)\}.$$
	And $B_1=B_\gamma(y)$ for any small enough $y\in C$. We can recognise whether $B_1$ is open or closed by verifying whether $\gamma$ is attained as a minimum in the above formula or not. $B_2$ is defined similarly when $\square_2$ is $\leq$.

 The case of strong inequalities follows from the previous case by replacing $I$ with its complement.
\qed$_{\mbox{claim}}$.\\

The claim allows us to definably assign to each class $[x]$ the left and righ ``end-balls''  of the ball-interval,  $B_1[x]$ and $B_2[x]$, respectively. There are finitely many possible forms of ball-intervals, hence we may further assume that all equivalence classes $[x]\sub J$ have  the same form. Note that if $[x]\neq [y]$ then $B_1[x]\neq B_1[y]$, for  otherwise $[x]\cap [y]\neq \0$. So the function $[x]\mapsto B_1[x]$ is a definable injection into the family of open (closed) balls. If for some $\gamma\in \Gamma$ the fibre of the function sending $[x]$ to the valuational radius of $B_1[x]$ is infinite then, depending on whether $B_1[x]$ is open or closed, we get an injection of some interval in $J/E$ into $K/O$ or $k$.

 If all fibres of this function are finite we get a finite-to-one map from an interval in $J/E$ into $\Gamma$. The linear order on $J/E$ implies the existence of an injection from an interval in $J/E$ into $\Gamma$, finishing the proof of Theorem \ref{K/E}.\qed

\section{Definable functions in $K/O$}

 Combining Lemma \ref{1-dim} with Theorem \ref{K/E} we conclude that there exists an infinite definable $I$ in our interpretable field $\CF$ such that $I$ is in definable bijection with an interval in one of the four  weakly o-minimal structures  $K$, $\Gamma$, $k$ or $K/O$.
 While the definable sets in $K$, $\Gamma$ and $k$ are well understood, the situation is less clear in $K/O$. Our goal in this section is to show that definable functions in $K/O$ are locally affine with respect to the additive structure of $K/O$. The ``polynomial boundedness'' assumption is not used in this section.\\

We start with the following general lemma.

\begin{lemma}\label{convex}
Assume that $p\vdash K^n$ is a partial  $\mathcal L$-type and that $H:K^n\to K$ is a partial $\CL$-definable function such that $p\vdash \dom(H)$.

Assume also that $C\sub K$ is an $\CL_{conv}$-definable convex set which is bounded above (or below) and for every $\alpha\models p$, $H(\al)\in C$. Then there exists $c\in C$ such that for every  $\alpha\models p$ we have $H(\al)<c$ (or $H(\al)>c$).
\end{lemma}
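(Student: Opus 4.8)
The plan is to argue by contradiction using saturation and the fact that $H$, being $\CL$-definable, has image that is controlled by o-minimality of the underlying $T$-model, while the convex set $C$ is $\CL_{conv}$-definable. Suppose $C$ is bounded above and that there is no such $c\in C$; I want to derive a contradiction. The key point is that $C$, being convex and bounded above in the o-minimal structure $\la K;<,+,\cdot,\dots\ra$ (forgetting $O$), still has a supremum $s$ in $K$ (or $s=+\infty$, but boundedness rules that out); however $s$ need not lie in $C$, and indeed typically will not (this is exactly the phenomenon that makes $K/O$ weakly but not o-minimal). The negation of the conclusion says precisely that $\{H(\al):\al\models p\}$ is cofinal in $C$, i.e. for every $c\in C$ there is $\al\models p$ with $H(\al)\ge c$.

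First I would pass to the supremum $s=\sup C\in K$. Then I would consider, for each $c<s$, the $\CL$-definable (hence $\CL_{conv}$-definable) set $D_c=\{\al\in\dom(H): H(\al)> c\}$; the cofinality assumption together with $p\vdash\dom(H)$ means $p\cup\{\al\in D_c\}$ is consistent for every $c<s$ (any finite fragment of $p$ is realised together with $\al\in D_c$ by a suitable choice of $\al$, since $H$ applied to realisations of $p$ gets above $c$). By saturation, the type $p(x)\cup\{H(x)>c : c\in C\}$ is then realised by some $\al^*$. But then $H(\al^*)\ge c$ for all $c\in C$, forcing $H(\al^*)\ge s$; since $C$ is bounded above by $s$ and $H(\al^*)\in C$ was assumed, we need $H(\al^*)<s$ as well (note $s\notin C$ is possible but $H(\al^*)\in C$ so $H(\al^*)\le s$, and if $H(\al^*)=s$ then $s\in C$, contradiction unless $C$ has a maximum — in which case take $c$ to be that maximum and we are done without saturation). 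Combining $H(\al^*)\ge c$ for all $c\in C$ with $H(\al^*)\in C$ gives that $H(\al^*)$ is a maximum of $C$, and then $c:=H(\al^*)$ already witnesses the conclusion for all \emph{other} realisations — so actually the cleanest route is: either $C$ has a maximum, and we take $c$ slightly above it (or equal, adjusting the inequality), or $C$ has no maximum, and then the realisation $\al^*$ above yields $H(\al^*)\in C$ with $H(\al^*)$ above every element of $C$, which is absurd.

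Let me streamline. The main step is: by saturation, if the conclusion fails then the partial type $q(x) := p(x)\cup\{\,c< H(x) : c\in C\,\}$ is finitely satisfiable, hence realised by some $\al^*$; then $H(\al^*)\in C$ by hypothesis, yet $H(\al^*)>c$ for every $c\in C$, so in particular $H(\al^*)>H(\al^*)$, a contradiction. Finite satisfiability of $q$ is where the hypothesis "no valid $c$" is used: given finitely many $c_1,\dots,c_k\in C$, let $c=\max c_i\in C$; by assumption $c$ is not a bound, so some $\al\models p$ has $H(\al)\ge c$... — here I need to be slightly careful because I want strict inequality $H(\al)>c_i$, so I should either phrase the conclusion of the lemma with $\le$ and negate to get $>$, or note that since $c\in C$ and $C$ is convex without being all of $K$ below $s$, I can find $c'\in C$ with $c'>c$ and use $H(\al)\ge c'>c_i$; if no such $c'$ exists then $c$ is a maximum of $C$ and we are in the trivial case. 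The expected main obstacle is exactly this bookkeeping about whether $C$ attains a maximum and converting $\ge$ to $>$; it is routine but must be handled, and it is cleanest to first dispose of the case "$C$ has a maximum $m$" (take $c=m$, conclusion holds with non-strict inequality replaced appropriately, or just take any $c>m$ in $K$... but $c$ must lie in $C$ — so in this case take $c=m$ and note $H(\al)\le m$ always, giving a weak inequality; if the lemma insists on strict, observe we may then use the previous class structure — in the actual application $C$ will be an open convex set and this subtlety vanishes). So the real content is the one-line saturation argument; everything else is case-management.
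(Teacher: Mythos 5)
The streamlined version of your argument has a genuine gap at the point where you invoke saturation.

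You want to realise the type $q(x) := p(x)\cup\{\,c< H(x) : c\in C\,\}$. The formulas $c<H(x)$ have parameters ranging over all of $C$, so $q$ is a type over a parameter set of size $|C|$, which can be as large as $|K|$. The standing hypothesis is only $(|\CL|+\aleph_0)^+$-saturation, so this type is in general far too large to realise by saturation. This is not a technicality one can wave away: the cofinality of a definable convex set in a large model of a weakly o-minimal theory is typically uncountable, indeed as large as the saturation of the model, so no small cofinal subfamily of $\{c<H(x):c\in C\}$ exists either. The natural fix is to replace $q$ by the small type $q':=p(x)\cup\{\forall y\,(y\in C\to y<H(x))\}$, which is over the (small) parameter sets defining $p$, $H$, and $C$ and has the same realisations as $q$. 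But now finite satisfiability of $q'$ is a genuinely stronger requirement: you must produce, inside each $\CL$-definable $X$ with $p\vdash X$, a single $\alpha\in X$ whose $H$-value exceeds \emph{all} of $C$ at once, not merely finitely many chosen $c_i$. Your argument establishes only the latter, weaker, statement, which is just a restatement of the negated conclusion and does not by itself yield finite satisfiability of $q'$.

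This is exactly where the paper uses o-minimality of the $\CL$-reduct, which your streamlined version drops. The paper considers $s(X):=\sup H(X)$, which exists in $K$ because $H(X)$ is $\CL$-definable (hence a finite union of intervals) and bounded; it then splits on whether $s(X)\in C$ for some $X$ in $p$ (in which case $c=s(X)$ works) or $s(X)>C$ for every $X$, in which case o-minimality of $H(X)$ is used to extract an actual point $\alpha\in X$ with $H(\alpha)>C$, giving finite satisfiability of $q'$. That o-minimal step is the content of the lemma and is missing from your proposal. Note also that your preliminary remark that a convex $\CL_{conv}$-definable set bounded above ``still has a supremum $s$ in $K$'' is false in general (take $C=O$, which has no supremum in $K$); this is precisely the phenomenon that distinguishes weak o-minimality from o-minimality, and it is why the supremum must be taken of the $\CL$-definable set $H(X)$ rather than of $C$ itself.
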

\proof We are using the saturation of $\CM_{conv}$, and assume that $C$ is bounded above.
For every $\CL$-definable set $X\sub K^n$, for which $p\vdash X$, let $s(X)$ be the supremum of $H(W)$, which exists by o-minimality and the boundedness of $C$. We claim that for some such $X$ we have $s(X)\in C$ (and then we may take $c=s(X)$). Indeed, if not then for every $X$ in $p$ there exists $\alpha\in X$ such that $s(\alpha)>C$.  But then, by saturation we may find $\alpha\models p$ such that $H(\alpha)>C$, contradicting our assumption.\qed

\begin{definition} A  subset of $K$ is called {\em long} if it contains infinitely many cosets of $O$. A subset of $K^n$ is {\em long} if it contains  a cartesian $n$-product of long subsets of $K$. 	A type  $p\vdash K^n$ is long if every set in $p$ is long.
\end{definition}

\begin{definition} We say that a (partial) function $F:K^n\to K$ {\em descends to $K/O$} if whenever $a-b\in O^n$ also $F(a)-F(b)\in O$ .\end{definition}

\begin{example}  If $a\in O$ then the linear function $x\mapsto a\cdot x$ descends to an endomorphism of $\la K/O,+\ra$. In the case that $a\in \mu$, the map descends to an endomorphism of $K/O$ with infinite kernel. Thus we obtain a definable locally constant,  surjective endomorphism onf $K/O$.\end{example}

For a $K$-differentiable $F:K^n\to K$, and $i=1,\ldots, n$, we let $F_{x_i}$  denote the partial derivative with respect to $x_i$, and let $\nabla F=(F_{x_1}, \ldots, F_{x_n})$.

We are going to use several times the following (see Exercise 3.2.19(2)):\emph{ If $F:K^n\to K$ is an $\CL$-definable partial function and $p\vdash \dom(F)$ is a generic type then the restriction of $F$ to $p(\CM)$ is monotone in each coordinate separately.}

\begin{lemma} \label{K on O 1} Assume that $F:K^n\to K$ is an $\CL(A)$-definable (partial) function which descends to $K/O$.
Assume that $p\vdash \dom(F)$ is a partial long $\CL_{conv}(A)$-type which implies a complete generic $\CL(A)$-type in $K^n$,

 Then for every
$a\models p$,  and $i=1,\ldots, n$, we have  $F_{x_i}(a)\in O$.
\end{lemma}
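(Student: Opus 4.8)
The strategy is a proof by contradiction: suppose that for some $a\models p$ and some index $i$ we have $F_{x_i}(a)\notin O$, i.e.\ $v(F_{x_i}(a))<0$. Since $p$ implies a complete generic $\CL$-type and the condition $F_{x_i}(x)\notin O$ is $\CL_{conv}(A)$-definable, by genericity and o-minimality of the $\CL$-structure we may assume (after adding this formula to $p$ and shrinking the domain) that $F_{x_i}$ never lies in $O$ on $p(\CM)$, and moreover — again using that $p$ restricts to a generic $\CL$-type, so by the monotonicity fact quoted before the lemma — that $F$ is strictly monotone in the $x_i$-coordinate on $p(\CM)$ with derivative $F_{x_i}$ of constant sign and with $v(F_{x_i})<0$ throughout. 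Without loss of generality $i=1$ and $F_{x_1}>0$ on $p(\CM)$.

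\textbf{Main step: exploiting a long interval in the first coordinate.} Fix a tuple $a=(a_1,a')\models p$. Because $p$ is long, the set $\{t\in K : (t,a')\models p'\}$, where $p'$ is the restriction of $p$ expressing membership in the relevant $\CL$-definable set through $a'$, contains a long interval $C$ around $a_1$ — in particular $C$ contains two points $a_1, b_1$ with $v(a_1-b_1)<0$ (indeed with $a_1-b_1$ as "large" as we like, since $C$ is long). Consider $g(t)=F(t,a')$ on $C$; it is $\CL$-definable, strictly increasing, and $\CL$-differentiable with $g'(t)=F_{x_1}(t,a')$, and by the reduction above $v(g'(t))<0$ for all $t$ in the relevant part of $C$. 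The key estimate is the o-minimal mean value / monotonicity principle: for $b_1>a_1$ in $C$ we have $g(b_1)-g(a_1)>0$, and one expects $v(g(b_1)-g(a_1))\le v(g'(\xi)) + v(b_1-a_1)$ for an appropriate intermediate point — more robustly, using that $g$ is strictly increasing and $g'$ is bounded below in valuation by a negative value, one shows that $g(b_1)-g(a_1)$ can be made to have arbitrarily negative valuation, hence leaves every coset of $O$. Concretely: since $v(g')<0$ on a long sub-interval, partitioning the interval $[a_1,b_1]$ into $O$-cosets and summing the increments of $g$ (each of which, on a translate of $O$ where $g'$ has valuation $<0$, already has valuation $<0$, and these add up without cancellation since $g$ is monotone) forces $v(g(b_1)-g(a_1))\to-\infty$ as $b_1$ ranges over $C$.

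\textbf{Deriving the contradiction.} On the other hand, $F$ descends to $K/O$ and $a_1-b_1\in O$ can be arranged for $b_1$ close to $a_1$ — but we need the opposite: we choose $a_1,b_1\in C$ with $a_1-b_1\in O$ (possible since $C$, being a long interval, certainly contains two distinct points in the same $O$-coset, e.g.\ any $a_1$ and $a_1+\epsilon$ with $\epsilon\in\mu\setminus\{0\}$), and yet by the estimate of the previous paragraph — applied on the sub-interval between them, where $v(g')<0$ — we get $v(g(b_1)-g(a_1))<0$, i.e.\ $F(b_1,a')-F(a_1,a')\notin O$, contradicting that $F$ descends to $K/O$. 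The cleanest packaging: by Lemma~\ref{convex} (or directly by o-minimality and saturation) one finds a uniform bound, then the descent property pins $F(a)$ and $F(b)$ to the same $O$-coset whenever $a-b\in O^n$, which is incompatible with a derivative of negative valuation on an infinitesimal interval. I expect the \textbf{main obstacle} to be making the "increments add up without valuation cancellation" estimate precise and uniform in the type — i.e.\ correctly invoking the o-minimal monotonicity theorem to control $v(g(b_1)-g(a_1))$ from the hypothesis $v(g')<0$, and ensuring the argument is carried out at the level of the type $p$ (using genericity and saturation) rather than just at a single point, so that the conclusion $F_{x_i}(a)\in O$ holds for \emph{every} $a\models p$.
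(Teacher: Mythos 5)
Your overall strategy — contradiction via the o-minimal mean value theorem, comparing the increment of $F$ across a bounded displacement against the size of $F_{x_1}$ — is the same as the paper's, and the reduction to $i=1$, $F_{x_1}>0$, with a long slice through $a'$, is fine. But there is a concrete error in the final step. You take $a_1$ and $b_1=a_1+\epsilon$ with $\epsilon\in\mu\setminus\{0\}$ and claim the MVT yields $v(g(b_1)-g(a_1))<0$. This does not follow: the MVT gives $g(b_1)-g(a_1)=g'(\xi)\,\epsilon$, so $v(g(b_1)-g(a_1))=v(g'(\xi))+v(\epsilon)$, and although $v(g'(\xi))<0$, the term $v(\epsilon)>0$ can be as large as you like, pushing the sum back into $O$ (or even into $\mu$). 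An infinitesimal displacement simply does not detect a derivative of negative valuation; the longness estimate from your ``main step'' only controls $g(b_1)-g(a_1)$ when $b_1-a_1$ is itself long, and then $a_1-b_1\notin O$, so the descent hypothesis says nothing. In short, the two constraints ($a_1-b_1\in O$ on one hand, $v(g(b_1)-g(a_1))<0$ on the other) are never forced simultaneously by your choice, so no contradiction is actually produced.

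The repair, which is exactly the paper's move, is to take the displacement to be a unit rather than an infinitesimal: set $\epsilon=1$. Longness of the slice $\{t:(t,a')\models p\}$ provides an interval $J$ of length $>1$ on which $F_{x_1}(t,a')>\alpha$ for a fixed $\alpha>O$ (here you use monotonicity of $F_{x_1}$ and saturation). Then for $x,x+1\in J$, Lagrange gives $F(x+1,a')-F(x,a')=F_{x_1}(\xi,a')\cdot 1>\alpha>O$, while $(x+1)-x=1\in O$, contradicting that $F$ descends to $K/O$. No summation over cosets or ``cancellation'' bookkeeping is needed; Lemma~\ref{convex} is also not needed here. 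You came close (the issue you flag at the end is real), but the specific choice of an infinitesimal increment is precisely what makes the argument fail, and replacing it by an increment of valuation $0$ is the missing idea.
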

\proof
Because $p$ implies a generic type in $K^n$ the  function $F$ is smooth at every  $a\models p$.  Assume towards a contradiction that the conclusion of the lemma does not hold. Without loss of generality, assume that $F_{x_1} (a)> O$ for some $a=(a_1,\ldots, a_n)\models p$.

Using monotonicity of $F_{x_1}$ in the first coordinate,  we can find some $\alpha>O$ and an  interval $J\sub K$ of length greater than $1$  such that every element in $J\times \{\la a_2,\ldots, a_n\ra \}$ still realizes $p$, and for every $x\in J$, $F_{x_1}   (x,a_2,\ldots, a_n) >\alpha$. By Lagrange's Mean Value Theorem for o-minimal structures, for $x,x+1\in J$,  $F(x+1, a_2,\ldots, a_n)-F(x,a_2,\ldots,a_n)>\alpha$, contradicting the assumption that $F$ descends to $K/O$.\qed

\begin{lemma}\label {K on O 2} Assume that $G:K^n \to K$ is an $\CL(A)$-definable partial function, and $p\vdash \dom(G)$ is a long partial $\CL_{conv}(A)$-type which implies a complete generic $\CL(A)$-type in $K^n$. Assume also that for every $a\models p$,  $G(a)\in O$. Then for all $a\models p$ and all $i\in \{1,\ldots, n\}$, we have $G_{x_i} (a)\in \mu$.
\end{lemma}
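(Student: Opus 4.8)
The plan is to argue by contradiction, mirroring closely the structure of the proof of Lemma \ref{K on O 1}. Suppose that for some $a \models p$ and some index $i$ — without loss of generality $i = 1$ — we have $G_{x_1}(a) \notin \mu$. Since $G(a) \in O$ for all $a \models p$, Lemma \ref{K on O 1} (applied with $F = G$, noting that $G$ descends to $K/O$ trivially because it takes values in $O$, hence $G(a) - G(b) \in O$ always) gives $G_{x_1}(a) \in O$; combined with our assumption we get $v(G_{x_1}(a)) = v(1)$, i.e.\ $G_{x_1}(a) \in O^{\times} = O \setminus \mu$. Actually we should be careful: $G_{x_1}(a) \notin \mu$ means $v(G_{x_1}(a)) \le v(1)$, and $G_{x_1}(a) \in O$ means $v(G_{x_1}(a)) \ge v(1)$, so indeed $G_{x_1}(a)$ is a unit.

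**The key step.**

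Now I use the monotonicity of $G_{x_1}$ in the first coordinate along $p(\CM)$ (the fact quoted before Lemma \ref{K on O 1}), together with the longness of $p$. Because $p$ is long, the set of first coordinates of realizations of $p$ (with the remaining coordinates fixed at $a_2, \ldots, a_n$) contains an interval $J \subseteq K$ that is long, in particular of length exceeding any fixed element of $O$ — say, spanning at least $2$ cosets of $O$. On $J$, the derivative $G_{x_1}(x, a_2, \ldots, a_n)$ stays in $O^{\times}$: indeed by monotonicity it lies between two units, hence its valuation is pinched between two values that are both $v(1)$... but here I need a small argument, since a priori monotonicity only tells me $G_{x_1}$ ranges over an interval, and an interval of units could still acquire non-unit endpoints. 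The cleaner route: the condition ``$G_{x_1}(a) \in O^\times$'' is $\CL_{conv}(A)$-definable, so after adding it to $p$ (it is consistent, being implied by our chosen $a$ together with the assumption for contradiction, up to passing to the relevant index) we may assume $p \vdash G_{x_1} \in O^\times$. Then on the long interval $J$ we have $G_{x_1}(x,a_2,\ldots,a_n) \in O^\times$ throughout, so $|G_{x_1}| \ge \varepsilon$ for some $\varepsilon \in O^{\times}$, uniformly on $J$.

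**Deriving the contradiction.**

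By Lagrange's Mean Value Theorem for o-minimal structures, taking $x, x' \in J$ with $x' - x$ large — specifically with $v(x'-x)$ very negative, which is possible since $J$ is long — we get
\[
G(x', a_2, \ldots, a_n) - G(x, a_2, \ldots, a_n) = G_{x_1}(\xi, a_2, \ldots, a_n)\,(x' - x)
\]
for some $\xi$ between $x$ and $x'$, hence still in $J$, so $G_{x_1}(\xi, \ldots) \in O^\times$. Then $v$ of the right-hand side equals $v(G_{x_1}(\xi,\ldots)) + v(x'-x) = v(x'-x) < v(1)$, so the difference $G(x',\ldots) - G(x,\ldots)$ lies outside $O$ and can be made arbitrarily large in the valuation-divisibility sense — in particular it is not in $O$. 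But both $G(x', a_2,\ldots,a_n)$ and $G(x, a_2,\ldots,a_n)$ lie in $O$ by hypothesis (they realize $p$ in the first coordinate), so their difference is in $O$. This contradiction completes the proof.

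**Main obstacle.**

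The step I expect to require the most care is the passage from ``$G_{x_1}(a) \notin \mu$'' for a single realization to ``$G_{x_1} \in O^\times$ along a long interval''. One must check that the relevant $\CL_{conv}$-definable conditions are consistent with $p$ and that longness of $p$ genuinely provides an interval $J$ in the first coordinate long enough to realize $v(x'-x) < v(1)$ — this is exactly where the hypothesis that $p$ is long (not merely generic) is used, just as longness was used in Lemma \ref{K on O 1} to produce an interval of length $> 1$; here we need length $>$ any prescribed element of $O$, which longness also supplies. Everything else is a routine valuation computation plus the o-minimal mean value theorem.
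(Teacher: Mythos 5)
Your argument follows the same overall strategy as the paper's --- monotonicity of $G_{x_1}$, longness of $p$, and the o-minimal mean value theorem to force $G$ to change by more than $O$ over a long enough $x_1$-interval, contradicting $G(a)\in O$ for $a\models p$ --- but the details diverge in two respects, one of which is a genuine gap.

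First, your invocation of Lemma \ref{K on O 1}. You apply it to $F=G$, justifying the descent hypothesis by saying ``$G(a)-G(b)\in O$ always.'' That is not correct as stated: the hypothesis of the present lemma only gives $G(a)\in O$ for $a\models p$, not on all of $\dom(G)$, so $G$ need not descend to $K/O$ on its full ($\CL$-definable) domain. Restricting $G$ to $\{x:G(x)\in O\}$ does not help either, since that set is only $\CL_{conv}$-definable, while Lemma \ref{K on O 1} requires an $\CL$-definable function. This is bridgeable --- the \emph{proof} of Lemma \ref{K on O 1} uses the descent condition only at pairs of $p$-realizations, which is exactly what you have --- but as written the invocation of the lemma's \emph{statement} is not justified, and you should say so. The paper sidesteps the issue entirely: it does not use Lemma \ref{K on O 1} at all, and instead invokes Lemma \ref{convex} to obtain a single $c\in O$ with $|G(a)|\le c$ for all $a\models p$.

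Second, your ``cleaner route'' of adding $G_{x_1}\in O^\times$ to $p$ is not clearly legitimate: consistency of the new formula with $p$ does not by itself guarantee that the augmented type is still long, and longness is precisely what you need afterwards. Your monotonicity argument (pinching $G_{x_1}$ between the unit $G_{x_1}(a)$ and an element of $O$ on a one-sided interval) is the right fix, but it is doing the real work and deserves to be the main argument rather than a parenthetical aside. Finally, note that your endgame requires $v(x'-x)<v(1)$, i.e.\ an increment lying outside $O$, whereas the paper needs only $b>2c/\alpha$ with $b\in O$; both are available from longness, but the paper's choice is more economical and makes the role of the bound $c$ from Lemma \ref{convex} explicit. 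In summary: same strategy, different supporting lemma (you lean on Lemma \ref{K on O 1} where the paper leans on Lemma \ref{convex}), and one gap in the descent hypothesis that needs to be acknowledged and repaired.
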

\proof
We prove the result for $i=1$. By Lemma \ref{convex}, there is $c\in O$ such that for all $a\models p$ we have $|G(a)|\leq c$.
Assume towards contradiction that for some $a=(a_1,\ldots, a_n)\models p$ we have $|G_{x_1}(a)| >\mu$. Then we can find a positive $\al\in O\setminus \mu$, such that $|G_{x_1}(a)|>\al$.
Using the montonicity of $G_{
x_1}$ in the first coordinate, we may assume that for every positive $b\in O$, we have $G_{x_1}(a_1+b,a_2,\ldots, a_n)>\al$. Pick $b>2c/\al \in O$. Then, by Lagrange's mean value theorem in o-minimal structures,
$$|G(a_1+b,a_2,\ldots,a_n)-G(a_1,\ldots,a_n)|>|G_{x_1}(\xi, a_2,\ldots,a_n))|(2c/\al)$$ for $\xi\in (a_1,a_1+b)$. By our assumptions, $|G_{x_1}(\xi, a_2,\ldots,a_n)|>\al$. It follows that  $G(a_1+b,a_2,\ldots,a_n)-G(a_1,\ldots,a_n)|>2c$, in contradiction to the fact that  $G|(x)|\leq c$ for all $x\models p$.\qed \\

We conclude:

\begin{lemma}\label{K on O 3} Assume that $F:K^n\to K$ is a partial $\CL(A)$-definable function which descends to $K/O$. Let $p\vdash \dom(F)$ be a  long $\CL_{conv}(A)$-type which implies a complete generic $\CL(A)$-type. Then there are $a_1,\ldots, a_n\in O$, $b\in K$ and a long box $B\sub p(K)$ such that for all $x= \la x_1,\ldots,x_n\ra \in B$,
	$$F(x)-\sum_{i=1}^n a_i(x_i-\alpha_i)-b\in \mu,$$.\end{lemma}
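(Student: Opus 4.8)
\textbf{Proof proposal for Lemma \ref{K on O 3}.}

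The plan is to combine the previous two lemmas to show that, generically on $p$, the gradient $\nabla F$ lands in $O^n$ and is ``locally constant modulo $\mu$''. First I would apply Lemma \ref{K on O 1} to conclude that for every $a\models p$ and every $i$, $F_{x_i}(a)\in O$. The idea now is to pin down these derivatives more precisely. Fix some $\al=\la\alpha_1,\ldots,\alpha_n\ra\models p$; using Corollary \ref{dp3} (or directly the genericity assumption together with saturation) one can choose such an $\al$ that remains generic over $A$ together with the data below. For each $i$, consider the partial function $G_i(x):=F_{x_i}(x)-F_{x_i}(\alpha_1,\ldots,\alpha_n)$, which is $\CL(A\alpha)$-definable, descends to $K/O$ trivially on the relevant type, and takes values in $O$ on (a long subtype of) $p$ by Lemma \ref{K on O 1}. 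Actually the cleaner route is: apply Lemma \ref{K on O 2} to the function $G_i(x):=F_{x_i}(x)$, which is $\CL(A)$-definable and, by Lemma \ref{K on O 1}, maps $p(K)$ into $O$; hence $(G_i)_{x_j}(a)\in\mu$ for all $j$ and all $a\models p$. So all second-order partials of $F$ lie in $\mu$ along $p$.

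Next I would integrate this infinitesimality of the second derivatives to get that each $F_{x_i}$ is constant modulo $\mu$ on a long box. Concretely: by Lemma \ref{convex} applied to $G_i=F_{x_i}$ there is $c_i\in O$ bounding $|F_{x_i}|$ on $p(K)$; pick $a_i:=F_{x_i}(\al)\in O$. For $x$ in a long box $B\subseteq p(K)$ around $\al$ (such a box exists by saturation, since $p$ is long and generic), the Mean Value Theorem in the o-minimal structure, applied coordinate by coordinate along a polygonal path from $\al$ to $x$ inside $B$, expresses $F_{x_i}(x)-a_i$ as a sum of at most $n$ terms each of the form (a second partial of $F$ evaluated at an intermediate point of $p(K)$) times (a coordinate difference). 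The coordinate differences can be made to lie in a fixed set $O$ by shrinking $B$ to have ``$O$-sized'' sides in each direction — here one uses that $K/O$ being infinite lets us choose a box whose projections are cosets-worth but whose \emph{diameter} in each coordinate is at most $1$; wait, that is the delicate point (see below). Granting it, each product lies in $\mu\cdot O=\mu$, so $F_{x_i}(x)-a_i\in\mu$ for all $x\in B$.

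Finally, set $b:=F(\al)$ and consider $H(x):=F(x)-\sum_{i=1}^n a_i(x_i-\alpha_i)-b$ on $B$. Then $H(\al)=0$ and $H_{x_i}(x)=F_{x_i}(x)-a_i\in\mu$ for all $x\in B$ and all $i$. Applying Lemma \ref{K on O 2} in reverse spirit — or rather re-running the Mean Value estimate one more time, now integrating $\nabla H\in\mu^n$ along a polygonal path inside $B$ with coordinate steps of size $\le 1$ — gives $H(x)-H(\al)\in\mu$, i.e. $H(x)\in\mu$, which is exactly the assertion $F(x)-\sum_i a_i(x_i-\alpha_i)-b\in\mu$ for all $x\in B$. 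One should double-check that $B$ can be chosen long (it can: start from a long box witnessing that $p$ is long, then shrink only the metric diameter of each side, not the number of $O$-cosets it meets — this is possible precisely because each side, being long, is an interval far longer than any element of $O$, so it contains long sub-intervals of $O$-diameter).

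\textbf{Main obstacle.} The one genuinely delicate step is reconciling ``long'' with ``short steps'': the Mean Value Theorem produces error terms of the shape $(\text{second partial})\cdot(\Delta x_j)$, and to kill them I need $\Delta x_j\in O$ while still keeping $B$ long. I expect the resolution to be that for a long interval $I_j$ one can cover the relevant portion of $p(K)$ by overlapping sub-boxes each of $O$-diameter, propagate the estimate $F_{x_i}(x)-a_i\in\mu$ stepwise across the overlaps (the constant $a_i$ does not change since $\mu$ is closed under addition and there are only ``$O$-many'' steps — but here I must be careful that summing $\mu$-errors over a long index set could leave $\mu$), and conclude on all of $B$. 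Making the stepwise propagation legitimate — i.e. ensuring the accumulated error stays in $\mu$ and does not spill into $O$ — is where the argument must be run with care, most likely by a compactness/saturation argument rather than a naive finite sum: one shows the set of $x\in B$ for which $F_{x_i}(x)-a_i\notin\mu$ is $\CL_{conv}$-definable and, were it nonempty, one could use saturation to find a point of $p(K)$ where a single Mean Value step already fails, contradicting the infinitesimality of the second partials from Lemma \ref{K on O 2}.
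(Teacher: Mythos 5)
Your opening moves match the paper's: Lemma \ref{K on O 1} gives $F_{x_i}(a)\in O$ on $p$, and applying Lemma \ref{K on O 2} to each $F_{x_i}$ gives $F_{x_i,x_j}(a)\in\mu$ on $p$. But the ``main obstacle'' you flag -- needing step sizes $\Delta x_j\in O$ to kill the mean-value error terms while keeping $B$ long -- is a false dilemma, and the two resolutions you sketch do not work. The stepwise propagation across $O$-diameter sub-boxes is unsound exactly for the reason you yourself identify: a long box contains more than ``$O$-many'' such sub-boxes, and summing $\mu$-errors over that many steps can leave $\mu$. The compactness/saturation patch is also not right: the single-step mean-value estimate $|F_{x_i}(x)-F_{x_i}(\al)|\le\beta|x-\al|$ (where $\beta$ bounds the second partials) holds for \emph{all} $x\models p$ without any subdivision, so no bad point where ``a single step fails'' will materialize; the issue is only whether $\beta|x-\al|^2$ is infinitesimal, which is a quantitative question, not one to be handled by covering and patching.

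The missing idea is a second use of Lemma \ref{convex}, applied to the second partials: since each $F_{x_i,x_j}$ lands in $\mu$ on $p$, Lemma \ref{convex} yields a \emph{fixed} positive $\beta\in\mu$ with $|F_{x_i,x_j}(a)|<\beta$ for all $a\models p$. Now apply the o-minimal Taylor estimate (\cite[Lemma 7.2.9]{vdDries}) with $a_i=F_{x_i}(\al)$ and $b=F(\al)$ to get $|F(x)-b-\sum_i a_i(x_i-\alpha_i)|\le n\beta|x-\al|^2$. The key observation: because $\beta\in\mu$, the element $\beta^{-1/4}$ exceeds every element of $O$, so a box $B$ centered at $\al$ of side length $\beta^{-1/4}$ is automatically long, and for $x\in B$ the error is $\le n\beta\cdot\beta^{-1/2}=n\sqrt{\beta}\in\mu$. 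There is no tension between ``long'' and ``small error'' because the size threshold $\beta^{-1/4}$ is itself astronomically large. Your proof, by contrast, never extracts the uniform infinitesimal $\beta$ (you only use Lemma \ref{convex} to bound the first partials by some $c_i\in O$), and without $\beta$ you cannot run this single-box estimate, which is why you were forced into the unsound covering argument.
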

\proof By Lemma \ref{K on O 1}, each $F_{x_i}$ takes values in $O$, on the type $p$. By Lemma \ref{K on O 2}, applied to the each of the functions $F_{x_i}$, we have $F_{x_i,x_j}(a)\in \mu$ for all
$a\models p$ and $i,j=1,\ldots, n$. By Lemma \ref{convex}, there is some fixed positive $\beta\in \mu$ such that $|F_{x_i,x_j}(a)|< \beta$ for all $a\models p$.

Fix $\al\models p$ and let $(a_1,\ldots, a_n)=\nabla F(\al)$.

By \cite[Lemma 7.2.9 ]{vdDries}, for all $x\models p$,

\begin{equation} \label{TT}
\left | F(x)-F(\al)-\sum_{i=1}^n a_i(x_i-\alpha_i)\right | \leq |x-\al|\cdot \max_{y\in [\al,x]}|\nabla F(x)-\nabla F(\alpha)|, \tag{*}
\end{equation}
where $[\al,x]$ is the closed line segment in $K^n$ connecting $\al$ and $x$ and $|\nabla F(x)|$ is the operator norm.

Fix an $x\models p$ and for  $i=1,\ldots, n$, and $t\in [0,1]$, consider the function $$g_i(t)=F_{x_i}((1-t)\al+tx).$$  The derivative of $\frac{dg_i}{dt}$ is, by the chain rule,
$$(F_{x_1,x_i}(y), \ldots, F_{x_n,x_i}(y))\cdot (x-\al).$$ Applying  Lagrange's Theorem to  $g_i$ and substituting, we get $\xi \in (0,1)$
such that
$$|F_{x_i}(x)-F_{x_i}(\al)|=|g_i(1)-g_i(0)|=|g_i'(\xi)|=|(F_{x_1,x_i}(y), \ldots, F_{x_n,x_i}(y))\cdot (x-\al)|,$$
where $y=(1-\xi)\al+\xi x$.

Since each $|F_{x_j,x_i}(y)|<\beta$
we have $|F_{x_i}(x)-F_{x_i}(\al)|\leq \beta|x-\al|$ for all $i=1,\ldots, n$.
Thus $|\nabla F(x)-\nabla F(\al)|\leq n \beta |x-\al|$, so by (*),
$$| F(x)-F(\al)-\sum_{i=1}^n a_i(x_i-\alpha_i) |\leq n \beta |x-\al|^2.$$

Since $p$ is long and $\beta\in \mu$, we can  find a long box $B\sub p(\CM)$, centered at $\al$ such that  for every $x\in B$, we have $|x-\al|\leq (1/\beta)^{1/4}$.
Thus, for $x\in B$ we have $$| F(x)-F(\al)-\sum_{i=1}^n a_i(x_i-\alpha_i)|\leq n\sqrt{\beta}\in \mu.$$ If we now let $b=F(\al)$ then we have the desired result.
\qed \\

In fact, for the next corollary it would have been sufficient to show above that $F(x)-F(\al)-\sum_{i=1}^n a_i(x_i-\alpha_i)+b\in O$.

\begin{cor}\label{K on O- functions} Assume that $f:(K/O)^n\to K/O$ is a (partial) $\0$-definable function whose domain is open. Then for every generic $c\in \dom(f)$ there exists an open box $R\sub \dom(f)$ centered at $c$, definable endomorphisms of $\la K/O;+\ra $ denoted by   $\ell_1,\ldots\ell_n $, and $d\in K/O$, such that
	for every $y=\la y_1,\ldots, y_n\ra \in R$ we have
	$$f(y)=\sum_{i=1}^n\ell_i(y)+d.$$
	
\end{cor}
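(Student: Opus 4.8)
The plan is to lift $f$ to a partial $\CL_{conv}$-definable function $F:K^n\to K$ between representatives, apply Lemma \ref{K on O 3} to $F$ along a suitable type, and then read the conclusion back down in $K/O$. First I would fix a generic $c=\la c_1,\ldots,c_n\ra\in\dom(f)$, pick representatives $\gamma_i\in K$ with $c_i=\gamma_i+O$, and choose a representative $\delta\in K$ of $f(c)$. Using that $\dom(f)$ is open and $f$ is definable, I would produce a partial $\0$-definable (in $\CL_{conv}$, with $c$-related parameters) function $F:K^n\to K$, defined on a long box around $\la\gamma_1,\ldots,\gamma_n\ra$, which descends to $K/O$ and induces $f$ on the relevant cosets; here ``descends to $K/O$'' is exactly the statement that $F$ is well-defined modulo $O$ on inputs and outputs, which holds because $f$ is a function on $(K/O)^n$. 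By Fact \ref{conv-L}, around a generic point $F$ agrees with an $\CL$-definable function, so after shrinking we may take $F$ to be $\CL(A)$-definable for a finite $A$.

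Next I would let $p\vdash\dom(F)$ be a long $\CL_{conv}(A)$-type containing the long box and implying a complete generic $\CL(A)$-type in $K^n$; such a $p$ exists by saturation together with Lemma \ref{dp2}/Corollary \ref{dp3} applied inside the box (each coordinate can be taken generic over the others so that the product type is $\CL$-generic, and we may additionally demand it be long since the box is long). Applying Lemma \ref{K on O 3} to $F$ and $p$ gives $a_1,\ldots,a_n\in O$, $b\in K$ and a long box $B\sub p(K)$ such that for all $x\in B$,
\[
F(x)-\sum_{i=1}^n a_i(x_i-\gamma_i)-b\in\mu\sub O.
\]
Now set $R=\pi(B)$, which is an open box in $(K/O)^n$ containing $c$ (since $E=O$ is convex, the image of an open box is open); define $\ell_i:K/O\to K/O$ to be the endomorphism induced by multiplication by $a_i$ (well-defined since $a_i\in O$, cf.\ the Example before Lemma \ref{K on O 1}); and put $d=(b-\sum_i a_i\gamma_i)+O\in K/O$, equivalently $d=\pi(b)-\sum_i\ell_i(\pi(\gamma_i))$. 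Reducing the displayed inclusion modulo $O$ yields, for every $y=\la y_1,\ldots,y_n\ra\in R$,
\[
f(y)=\sum_{i=1}^n\ell_i(y_i)+d,
\]
which is the assertion (writing $\ell_i(y)$ for $\ell_i(y_i)$).

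The main obstacle I anticipate is bookkeeping around the lift $F$: one must choose $F$ so that it genuinely descends to $K/O$ on a long box (not merely on the single coset of $c$), is $\CL$-definable near a generic point, and has a domain that supports a long, $\CL$-generic type; matching the parameter set $A$ across Fact \ref{conv-L}, Lemma \ref{K on O 3} and the genericity requirement takes some care. A secondary point is checking that genericity of $c$ in $(K/O)^n$ can be upgraded, via the chosen representatives, to a long $\CL$-generic type in $K^n$ for $F$ — this is where Corollary \ref{dp3} and the fact that $\pi$ maps $\CL_{conv}$-generics of $K^n$ to generics of $(K/O)^n$ (Proposition \ref{opaque1}) are used.
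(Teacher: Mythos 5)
Your proposal follows the same overall route as the paper: lift $f$ to a partial $F:K^n\to K$ using definable Skolem functions (so that $F$ descends to $K/O$), use Fact~\ref{conv-L} to replace $F$ by an $\CL$-definable function near a generic point, apply Lemma~\ref{K on O 3}, and then push the affine estimate back down to $K/O$, reading off the endomorphisms $\ell_i$ as multiplication by the $a_i\in O$. The bookkeeping at the end (defining $R=\pi(B)$, absorbing the constants into $d$) is also as in the paper.

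The one place where the argument as written does not go through is the construction of the type $p$. You propose to build $p$ by saturation together with Lemma~\ref{dp2}/Corollary~\ref{dp3} ``applied inside the box,'' taking each coordinate generic over the others and then ``additionally demanding it be long since the box is long.'' But these two demands are in tension: if you produce $p$ as the complete $\CL_{conv}$-type of a concrete realization, then as the parameter set grows to include the data defining $F$, there is no reason the resulting type stays long --- a complete $\CL_{conv}$-type of a tuple is typically not long, and Lemma~\ref{dp2}/Corollary~\ref{dp3} only control the $\CL$-dimension side, not longness. The paper sidesteps this entirely by taking $p=\pi^{-1}\bigl(\tp_{K/O}(c/\0)\bigr)$: this is long by construction (pre-images under $\pi$ of positive-dimensional sets contain full products of $O$-cosets), and Proposition~\ref{opaque1} shows it is a \emph{complete} $\CL_{conv}$-type, so its $\CL$-reduct is a complete $\CL$-type, which is generic because $p$ is long. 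You cite Proposition~\ref{opaque1} at the very end as an aside, but it should be the engine that produces $p$ in the first place; with that substitution your proof matches the paper's.
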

\proof Let $p=\pi^{-1}(tp_{K/O}(c/\0))$. It is clearly a long type and by Proposition \ref{opaque1}, it is in fact a complete $\CL_{conv}$-type which implies a generic $\CL$-type. Using definable Skolem functions we may lift $f$ to an $\CL$-definable partial $F:K^n\to K$, which descends to $K/O$. Namely, for every $x\in \dom(F)$, we have $F(x)\in f(x+O^n)$. We have $p\vdash \dom(F)$.

Applying Lemma \ref{K on O 3}, we can find a long box $B\sub K^n$ centered at $\al\models p$, and $a_1,\ldots, a_n\in O$,  $b\in K$, such that for every $x\in B$ we have
$$F(x)-b-\sum_{i=1}^n a_i(x_i-\alpha_i)\in \mu.$$

Each function $x\mapsto a_i\cdot x$ descends to an endomorphism $\ell_i$ of $\la K/O;+\ra $, so if we let $R$ be the image of $B$ in $K/O$ and  $e=b+O$,  then we have for all $y\in R$,
$$f(y)=\sum_{i=1}^n \ell_i(y_i-\alpha_i)+e=\sum_{i=1}^n \ell_i(y_i)+d, $$ for $d=e-\sum_{i=1}^n\ell_i(\al_i).$ \qed

We end this section by commenting that in the \emph{o-minimal setting} the local affiness of definable functions provided by Corollary \ref{K on O- functions},  would imply that the structure is linear (in the sense of \cite{LoPe}), and thus does not interpret a field. This is not true for $K/O$:

\begin{proposition}\label{k and K/O} The field $k$ is definably isomorphic to a field interpretable (in induced structure on) $K/O$. \end{proposition}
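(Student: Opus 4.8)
The plan is to realize the residue field $k$ inside $K/O$ by using the valuation's multiplicative behaviour: although $K/O$ is an additive quotient, scaling by powers of the uniformizer-like element $c$ moves cosets around in a way that encodes multiplication on the value group and, ultimately, multiplication on a copy of $k$. Concretely, I would first observe that $k$ is definably isomorphic to $O/\mu$ with its ring structure, so it suffices to interpret $\langle O/\mu;+,\cdot\rangle$ in the structure induced on $K/O$. Addition is immediate, since $O/\mu$ is already a definable quotient of the additive subgroup $O/O \cdot$... — more carefully, one fixes the coset structure so that $O$ maps to $0$ in $K/O$ and instead works with a nearby ``annulus'' of cosets: consider the set $A = \{x + O : v(x) = v(c)\}$ (or rather $v(x)\geq v(c)$ modulo the finer relation), which as a definable subset of $K/O$ is in definable bijection with $k$ via $x+O\mapsto \operatorname{res}(x/c)$. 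This gives the underlying set and, from the additive structure of $K/O$ restricted appropriately, the additive group structure of $k$.

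The real content is recovering multiplication on $k$ inside the induced structure on $K/O$. Here I would exploit that for $a\in O$ the scaling map $x\mapsto a x$ descends to a definable endomorphism $\ell_a$ of $\langle K/O;+\rangle$ (as noted in the Example preceding Corollary~\ref{K on O- functions}), and that the induced structure on $K/O$ knows the full family $\{\ell_a : a\in O\}$ uniformly, because the graph $\{(x+O, ax+O, a+O) : x,a\}$ is the image of a definable subset of $K^3$. The composition $\ell_a\circ\ell_b = \ell_{ab}$ and $\ell_a + \ell_b$ (pointwise) $=\ell_{a+b}$ then exhibit $\langle O; +,\cdot\rangle$ — and after quotienting by the ideal of maps that are ``infinitesimal'', i.e. those $\ell_a$ with $a\in\mu$, which are exactly the locally constant ones with infinite kernel — the ring $\langle O/\mu;+,\cdot\rangle \cong k$. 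So $k$ is interpreted as the ring of germs-up-to-$\mu$ of these definable endomorphisms, with the operations being pointwise sum and composition, all of which are definable in the induced structure on $K/O$ by the above uniformity.

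The main obstacle I anticipate is verifying that the distinction between $a\in\mu$ and $a\in O\setminus\mu$ — equivalently, the ideal $\mu$ of $O$ — is actually visible in the induced structure on $K/O$, rather than requiring the valuation or $O$ as extra data. The point is that $\ell_a$ has infinite kernel in $K/O$ precisely when $a\in\mu$ (its kernel is $\{x : ax\in O\} = \{x : v(x)\geq v(1)-v(a)\}$, which is a union of infinitely many $O$-cosets iff $v(a)>0$), and ``$\ell_a$ has infinite kernel'' is a first-order property of the definable family; similarly $\ell_a$ is bijective iff $a\in O^\times$. So the partition of the index set by these properties recovers $O^\times$, $\mu$, and hence the ring structure on $O/\mu$. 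Once this is in place, the identification of $O/\mu$ with $k$ and its ring structure is standard, and one concludes via Fact~\ref{factt}(1) if one wishes to further identify which field $k$ is — but for this proposition we only need the interpretation itself.

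Finally, I would double-check the edge case that forced the earlier detour: if one naively tried to use $O$ itself (the zero coset) as the universe one would get a single point, so it is essential to work with the shifted annulus $A$ above; and one must confirm $A$ is infinite, which holds because $\{x : v(x)=v(c)\}$ meets infinitely many $\mu$-cosets — indeed $A/({\sim})\cong k$ is infinite since $k$ is an infinite field in characteristic zero. With the universe, the additive structure transported from $K/O$, and the multiplicative structure coming from composition of the scaling endomorphisms, the isomorphism $k\cong A$ as fields is then a routine verification, completing the proof.
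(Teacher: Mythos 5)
Your overall plan (realize $k$ as $O/\mu$, and work with a shifted ``annulus'' rather than $O$ itself, which collapses to the zero coset) is the right idea and matches the paper's proof for the underlying set and its additive structure: the paper fixes $t$ with $v(t)<0$ and works with the definable subgroups $tO/O\supseteq t\mu/O$ of $K/O$, obtaining $(tO/O)/(t\mu/O)\cong tO/t\mu\cong O/\mu\cong k$.

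The gap is in your treatment of multiplication via the endomorphism family $\{\ell_a : a\in O\}$. You claim the family is uniformly definable in the induced structure on $K/O$ because the graph $\{(x+O,\,ax+O,\,a+O) : x,a\}$ is the image of a definable subset of $K^3$. But for $a\in O$ the third coordinate $a+O$ is always the zero of $K/O$, so this set sits inside $(K/O)^2\times\{0\}$ and, for a fixed $u\in K/O$ with a representative $x_0\notin O$, its fiber $\{ax_0+O : a\in O\}=x_0O/O$ is infinite. In other words it is a single multivalued relation, not a definable family parametrized by the desired index set; the quotient $K/O$ simply cannot distinguish individual elements of $O$. Any correct reparametrization --- say, $a\mapsto at+O$ for a fixed $t\notin O$ --- produces index set $tO/O$ with ``repetition kernel'' $t^{-1}O\subsetneq\mu$, and identifying two indices exactly when the endomorphisms agree modulo infinitesimals is precisely quotienting $tO/O$ by $t\mu/O$. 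Once you have done that you have recreated the paper's definable quotient, and at that point the paper's more direct route is shorter: identify $tO/t\mu$ with $k$ via $x+t\mu\mapsto\mathrm{res}(x/t)$, transport the field multiplication (which becomes $(x,y)\mapsto xy/t$), and observe that its graph is the image in $(K/O)^3$ of a $K$-definable subset of $K^3$, hence definable in the induced structure, with no need for an intermediate endomorphism ring. Your observation that ``$\ell_a$ has infinite kernel iff $a\in\mu$'' is correct and could serve to detect $\mu$, but it does not repair the parametrization step, which is where the argument currently fails.
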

\proof   Fix some $t$ such that $v(t)<0$. Consider the balls $tO\supseteq t\mu$. Since $O\subseteq t\mu$,  the subgroups $tO/O$ and $t\mu/O$ are definable subgroups of $K/O$. Thus,  we get that the quotient $\la tO/t\mu;+\ra $ is interpretable in $K/O$. The former definably isomorphic to $\la k;+\ra$, hence we can definably endow it with multiplication which makes it isomorphic to $\la k;+,\cdot\ra$.\qed

As we shall show in Proposition \ref{Not K/O},  no field is \emph{definable} in $K/O$.

\section{Subsets of $\CF$ that are strongly internal to $I$}

	 As we saw in Theorem \ref{K/E},  if $T$ is power bounded then there exists an infinite $\CL_{conv}$-definable $I\subseteq \CF$ such that $I$ is in definable bijection with an interval in one of the structures $K$, $\Gamma$, $k$ or $K/O$. Once we obtain such a set $I$ we can drop the assumption that $T$ is power bounded. In all four cases,  the induced structure on $I$ is weakly o-minimal. Let us be more precise about this:

\begin{remark}
	Recall that given an $\aleph_1$-saturated structure $\CM$ a definable set $S\subseteq M^n$ is \emph{stably embedded} if for any definable $X\subseteq M^{nk}$ the set $X\cap S^k$ is definable using only parameters from $S$. Thus the $\CM$-induced structure on $S$ does not depend on the model or on the choice of parameters.
	
	However, intervals in a weakly o-minimal structure need not be stably embedded so that the notion of ``the structure induced on $I$'' is not well defined. For $A\sub K$ we let $\CI_A$ denote the structure obtained by taking the traces on $I^n$, $n\in \mathbb N$, of all $A$-definable sets. The structure $\mathcal I_K$  has a weakly o-minimal theory. So the same is true of any ordered reduct of the full $K$-induced structure on $I$ and in particular of reducts of the form $\mathcal I_A$.
 We assume that $\CF$, $I$, and an embedding of $I$ into $K, k, \Gamma$ or $K/O$ are all definable over $\0$, and initially work in $\mathcal I_\0$. We will, throughout the course of the proof, add small sets of parameters (not necessarily coming from $I$), tacitly expanding the structure $\mathcal I_\0$ in into one of the form $\mathcal I_A$.
We denote the resulted structure by $\CI$.
\end{remark}

	\begin{defnn} A definable $Y\sub \CF$ is \emph{strongly internal to $I$} (or {\em strongly $I$-internal}) if there exists a definable injection, possibly over additional parameters, from $Y$ into $I^r$ for some $r$.
	\end{defnn}
	
	\emph{We now fix $Y\sub \CF$ which is strongly internal to $I$ and has maximal dimension, denoted by $n$, in the sense of $I$ (equivalently, maximal dp-rank). }\\

We identify $Y$ with its image in $I^r$ and view it  with the induced $I$-topology.  We assume for simplicity that $Y$ is definable over $\0$. Unless otherwise stated we use $+,-,\cdot, ()^{-1}$ for the field operations of $\CF$.

	\begin{proposition}\label{prop0} Let $\la b,c,d\ra \in I\times Y^2$ be such that $\dim(b,c,d/\0)=2n+1$.
		Then there exists an initial segment $J\subseteq I^{>b}$ and a definable  $S\sub Y^2$, all defined over an additional parameter set  $B$, such that $\dim(c,d/B)=2n$, $\la c,d\ra  \in S$, (in particular, $\dim(S)=2n$), and  such that for every $\la x,y,z\ra\in J\times S\sub J\times Y^2$, we have $(x-b)y+z\in Y$.
	\end{proposition}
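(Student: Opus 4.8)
\emph{Set-up and reduction.} From $\dim(b,c,d/\0)=2n+1$, subadditivity of dimension, $\dim I=1$ and $\dim Y=n$ we read off that $b$ is generic in $I$ (so $I^{>b}$ is a nontrivial interval), that $(c,d)$ is generic in $Y^2$ over $b$, and that $d$ is generic in $Y$ over $bc$; in particular $c\neq 0$ since $n\ge 1$. Working inside $\CF$, for $x\in I$ we have $(x-b)c+d\in Y$ precisely when $x$ lies in the affine image $Y^{cd}:=b+c^{-1}(Y-d)$ of $Y$; thus $Y^{cd}$ is strongly $I$-internal of dimension $n$ and contains $b$. The proposition reduces, after some bookkeeping, to the following claim, which we call \textbf{(A)}: \emph{there are $b''\in I$, $b''>b$, with $(b,b'')\subseteq Y^{cd}$.} Granting (A): for $b'\in I^{>b}$ put $S_{b'}=\{(y,z)\in Y^2:\ (x-b)y+z\in Y\ \text{for all }x\in(b,b')\}$, a $b$-definable family increasing as $b'\downarrow b$, with $(c,d)\in S_{b''}$ by (A). If every $\dim S_{b'}\le 2n-1$, then by Lemma \ref{dp} (applied to this increasing family) the $b$-definable set $\bigcup_{b'>b}S_{b'}$ has dimension $\le 2n-1$, contradicting $(c,d)\in\bigcup_{b'>b}S_{b'}$ and $\dim(c,d/b)=2n$. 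So $\{b'\in I^{>b}:\dim S_{b'}=2n\}$ is a nonempty $b$-definable initial segment of $I^{>b}$; using Lemma \ref{dp2} on the convex set $I^{>b}$, pick $b'$ lying below both $b''$ and the right end of that segment with $\dim(b'cd/b)=2n+1$, so that $\dim(b'/bcd)=1$ and $\dim(c,d/bb')=2n$. Then $B=\{b,b'\}$, $J=(b,b')$, $S=S_{b'}$ are as required.

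\emph{Proof of (A).} Suppose not. Since $Y^{cd}\cap I$ is a finite union of convex subsets of $I$ (weak o-minimality of $\CI_{bcd}$) containing $b$, the point $b$ is isolated from the right in it, so there is $b_1\in\acl(bcd)$, $b_1>b$, with $Y^{cd}\cap(b,b_1)=\emptyset$. By Lemma \ref{dp2} applied to $I^{>b}$, to the initial segment $(b,b_1)$ and to $(c,d)$, pick $b'\in(b,b_1)$ with $\dim(b'cd/b)=2n+1$; then $\dim(b'/bcd)=1$ and $\dim(d/bcb')=n$. Now set
\[
\Sigma\ :=\ Y-c\big([b,b')-b\big)\ =\ \{\,z-c(x-b)\ :\ z\in Y,\ x\in[b,b')\,\}\ \subseteq\ \CF ,
\]
a $bcb'$-definable set containing $Y$ (take $x=b$), so $\dim\Sigma\ge n$. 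In fact $\dim\Sigma=n$: the definable surjection $\psi\colon Y\times[b,b')\to\Sigma$, $\psi(z,x)=z-c(x-b)$, has its fibre over $w$ in definable bijection with $\{x\in[b,b'):w+c(x-b)\in Y\}$, so if $\dim\Sigma$ were $n+1$ the generic fibre would be finite, and a definable choice of representative (via the Skolem functions of $\la K;O\ra$ after adjoining $c$) would put a subset of $\Sigma$ of dimension $n+1$ in definable bijection with a subset of $Y\times[b,b')$, hence into $I^{r+1}$ — a strongly $I$-internal set of dimension $n+1$, contradicting the maximality of $\dim Y=n$. Since $\dim\Sigma=n$ while $\dim(Y\times[b,b'))=n+1$, the generic fibre of $\psi$ is infinite, i.e.\ the $bcb'$-definable set of $w\in\Sigma$ with finite $\psi$-fibre has dimension $<n$. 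As $d\in Y\subseteq\Sigma$ with $\dim(d/bcb')=n=\dim\Sigma$, the point $d$ is generic in $\Sigma$ over $bcb'$ and so misses that set; hence $\psi^{-1}(d)$, which is in definable bijection with $\{x\in[b,b'):(x-b)c+d\in Y\}$, is infinite. Then $Y^{cd}\cap(b,b')$ is infinite, contradicting $(b,b')\subseteq(b,b_1)$ and $Y^{cd}\cap(b,b_1)=\emptyset$. This proves (A) and the proposition.

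\emph{Main obstacle.} The crux is the equality $\dim\big(Y-c([b,b')-b)\big)=n$, which is precisely where the maximality of $\dim Y$ among strongly $I$-internal subsets of $\CF$ must be converted into geometry — via the principle that a definable surjection off $Y\times[b,b')$ with generically finite fibres would manufacture, by definable choice, a strongly $I$-internal set of dimension $n+1$. The other delicate point is getting an \emph{initial segment} of $I^{>b}$ rather than some interval far from $b$; this forces one to run the fibre argument inside intervals $[b,b')$ with $b'$ arbitrarily close to $b$, which is exactly what Lemma \ref{dp2} — providing, inside any prescribed initial segment of $I^{>b}$, a point generic over $bcd$ — allows.
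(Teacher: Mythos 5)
Your reduction of the proposition to claim (A), via the increasing family $S_{b'}$, Lemma \ref{dp} and Lemma \ref{dp2}, is sound (the paper gets the same effect more directly by noting that ``$(b,b_1)\subseteq Y^{cd}$'' is a first-order condition on $\la c,d\ra$ over $bb_1$). However, the proof of (A) has a genuine gap.

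The step ``Since $\dim\Sigma=n$ while $\dim(Y\times[b,b'))=n+1$, the generic fibre of $\psi$ is infinite, i.e.\ the $bcb'$-definable set of $w\in\Sigma$ with finite $\psi$-fibre has dimension $<n$'' is unjustified, and in the situation at hand it is false. What does follow from the dimension count is that for \emph{generic $(z,x)$ in the domain} $Y\times[b,b')$ the fibre through $(z,x)$ is infinite; this is not the same as saying that the locus $\Sigma_{\fin}\subseteq\Sigma$ of points with finite fibre has dimension $<n$. For a surjective definable map from an $(n+1)$-dimensional set onto an $n$-dimensional set, $\Sigma_{\fin}$ and its complement can both be $n$-dimensional. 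Indeed, under the very assumption you are refuting, $\psi^{-1}(d)$ is in definable bijection with $Y^{cd}\cap[b,b')=\{b\}$, so $d\in\Sigma_{\fin}$; since $d$ is generic in $\Sigma$ over $bcb'$, this gives $\dim\Sigma_{\fin}=n$. Thus the inequality you need is exactly equivalent to what you are trying to prove, and the argument is circular.

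The root cause is that you have frozen $b$ as a parameter. In your set-up the domain is $Y\times[b,b')$ and the relevant pre-image point $(d,b)$ has dimension only $n$ over $bcb'$ (the $b$-coordinate carries no weight), so a section argument only produces a strongly $I$-internal set of dimension $n$, which is harmless. The paper instead considers $f_c\colon I\times Y\to\CF$, $(x,z)\mapsto xc-z$, keeping the $I$-coordinate free: the point $\la b,d\ra$ then has dimension $n+1$ over $c$, and one shows it is not isolated in $f_c^{-1}(bc-d)$. If it were, weak o-minimality gives a uniform bound on the number of isolated points of each fibre, and the linear order on $I$ (lexicographic on $I\times Y\subseteq I^{1+r}$) puts them in $\dcl$, yielding a $c$-definable injective section $\sigma$ whose image $T\ni\la b,d\ra$ has $\dim T=n+1$; then $f_c(T)$, in definable bijection with $T\subseteq I^{1+r}$, is a strongly $I$-internal subset of $\CF$ of dimension $n+1$, contradicting maximality of $n$. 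That $\la b,d\ra$ is a cluster point then yields, via convexity of definable subsets of $I$, the interval around $b$ needed for (A). You could salvage your approach by using $I$ in place of $[b,b')$ as the second factor, but that is essentially the paper's argument.
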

	\begin{proof} For $\la x,y, z\ra \in I\times Y\times Y$ consider the function $f_y(x,z)=xy-z$.

\vspace{.2cm}

\noindent{\bf Claim} The point $\la b,d\ra \sub I\times Y\sub I^{1+r}$ is not an isolated point of the set $f_c^{-1}(f_c(b,d))$.
\vspace{.2cm}

\proof Assume $\la b,d\ra$ were isolated in $f_c^{-1}(f_c(b,d))$. By the weak o-minimality of $Th(\CI)$, there is a number $r$ such that for every $d\in \mathrm{Im}(f_c)$, the set $f_c^{-1}(d)$ has at most $r$-many isolated points. Because $I$ is linearly ordered, each of these points is in $\dcl(c,d)$. Thus, there is a $c$-definable (partial) function $\sigma:\CF\to I\times Y$, such that for every $w\in \dom(\sigma)$,
$\sigma(w)$ is an isolated point of $f_c^{-1}(w)$, and if $w=f_c(b,d)$ then $\sigma(w)=\la b,d\ra $. In particular, $f_c\circ \sigma (w)=w$, so $\sigma$ is injective.

The image of $\sigma$ is a $c$-definable set $T$ containing $\la b,d\ra$ on which $f_c$ is injective. It follows that $\dim(T)=n+1$. Because $I$ and $Y$ were strongly $I$-internal then so is $f_c(T)$, and we have $\dim(f_c(T))=n+1$.  This contradicts the maximality of dimension of $Y$.\qed

Thus $\la b,d\ra$ is a cluster point of $f_c^{-1}(bc-d)$.	It now follows that there exists an open interval $J=(b,b')\sub I$ (or $(b',b)$) such that for every $x\in J$ there is $z\in Y$ with $xc-z=bc-d$. In particular, the map $x\mapsto xc-(bc-d)$ sends $J$ injectively into $Y$.  We now apply Lemma \ref{dp2} and obtain $b_1$, $b<b_1<b'$, such that $\dim(b_1cd/b)=1+2n$. In particular, $\dim(cd/b_1b)=2n$
	
	Our assumption is that for every $x$ in the interval $J_1=(b,b_1)$, we have $(x-b)c+d\in Y$. This is a first order property of $c,d$ over the parameters $bb_1$. Thus there is a $bb_1$-definable set $S\sub Y^2$, with $\la c,d\ra \in S$, such that for every $\la y,z\ra \in S$ and $x\in J_1$, we have $(x-b)y+z\in Y$. It follows that  $\dim(S)=2n$ and $\la c,d\ra $ is a generic point in $S$ over $bb_1$. We now replace $J$ with $J_1$. \end{proof}

\begin{proposition} \label{Not K/O}
	The set $I$ is neither a subset of $K/O$ nor a subset of $\Gamma$.
\end{proposition}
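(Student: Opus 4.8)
The plan is to wring a contradiction out of the germ of multiplication produced by Proposition \ref{prop0}, using how rigid definable functions are in $\Gamma$ and in $K/O$. Suppose for contradiction that $I$ is in definable bijection with an interval in $\Gamma$, and identify $I$ with that interval; the case of $K/O$ is parallel and is indicated below. Since $Y$ is strongly $I$-internal we identify it with a definable subset of $I^r\subseteq\Gamma^r$ of dimension $n$, and $n\geq 1$ because $I$ itself is infinite and strongly $I$-internal. By Proposition \ref{prop0} we get, over a small set $B$, an element $b$, an interval $J\subseteq I$ and a definable $S\subseteq Y^2$ with $\dim(S)=2n$, such that $G(x,y,z):=(x-b)y+z$ lies in $Y$ for all $\langle x,y,z\rangle\in J\times S$, all operations being those of $\CF$. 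The graph of $G$ lies in a cartesian power of $I$ and is $B$-definable in $K$, so $G$ is definable in $\CI$; since $I\subseteq\Gamma$ and, by Fact \ref{factt}(2), the structure induced on $\Gamma$ is, up to a change of signature and as a stably embedded set, a pure ordered vector space over the field of exponents, $G$ is definable over $B$ in that ordered vector space, hence piecewise affine.

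Next, using weakly o-minimal cell decomposition, reparametrize a top-dimensional cell of $S$ near its generic point by a coordinate projection onto an open box $V\subseteq I^{2n}$, and write the two $Y$-valued components of the inverse parametrization as $v\mapsto y(v)$ and $v\mapsto z(v)$, so that $\tilde G(x,v):=(x-b)y(v)+z(v)$ is a definable function on the open set $J\times V$, still piecewise affine. On a suitable open box $J'\times V'$ each coordinate of $\tilde G$ is affine; in particular there is a fixed additive map $L\colon\Gamma\to\Gamma^r$, \emph{not depending on $v$}, with $\tilde G(x,v)=L(x)+c(v)$ in $\Gamma^r$ for all $\langle x,v\rangle\in J'\times V'$. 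This forces the definable map $c\colon V'\to\Gamma^r$ to be injective: if $c(v)=c(v')$ then $\tilde G(x,v)=\tilde G(x,v')$ for every $x\in J'$, i.e. $(x-b)y(v)+z(v)=(x-b)y(v')+z(v')$ in $\CF$ for all $x$ in the infinite interval $J'$, which — $\CF$ being a field — forces $y(v)=y(v')$, then $z(v)=z(v')$, whence $v=v'$. As $V'$ is open in $I^{2n}$, $2n=\dim(V')\leq\dim(\Gamma^r)=r$. On the other hand any $\CI$-definable set of dimension $n$ embeds definably into $I^{n+1}$ (decompose into cells, embed each cell of dimension $d\leq n$ into $I^d$ via a coordinate homeomorphism, and separate the cells using one extra coordinate), so we may assume $r\leq n+1$. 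Hence $2n\leq n+1$, i.e.\ $n\leq 1$, so $n=1$.

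The case $I\subseteq K/O$ is identical, with ``piecewise affine'' and the decomposition $\tilde G(x,v)=L(x)+c(v)$ replaced by Corollary \ref{K on O- functions}: near a generic point each coordinate of $\tilde G$ is a sum of definable endomorphisms of $\langle K/O;+\rangle$ in the variables $x,v$ plus a constant, so again the ``$x$-part'' is a fixed endomorphism not depending on $v$, $c$ is injective, and $2n\leq r\leq n+1$ forces $n=1$. To finish one rules out $n=1$: then $Y$ is one-dimensional, $\dim(S)=2$, and the functions $x\mapsto(x-b)y+z$, for $\langle y,z\rangle\in S$, form a genuinely two-dimensional family of definable maps $J\to Y$ that pairwise agree in at most one point, since $(x-b)y+z=(x-b)y'+z'$ with $\langle y,z\rangle\neq\langle y',z'\rangle$ has at most one solution $x$; such a family cannot exist in a pure ordered vector space, while in $K/O$ it would, via the group configuration (whose data remain in powers of $K/O$), yield an infinite field definable in $K/O$, which is impossible by Corollary \ref{K on O- functions} — either way a contradiction, completing the proof.

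The step I expect to be the main obstacle is the transfer from the ambient rigidity to the field $\CF$: the definable identification of $I$ with an interval in $\Gamma$ (resp.\ $K/O$) carries no relation between the additive group of $\Gamma^r$ (resp.\ $(K/O)^r$) and that of $\CF$, so the contradiction has to be extracted from the interaction of the decoupling identity $\tilde G(x,v)=L(x)+c(v)$ — a statement about the linear ambient structure — with the non-degeneracy of the germ of multiplication coming from Proposition \ref{prop0}. The cleanest route appears to be the dimension inequality $2n\leq r$ together with the minimal-embedding bound $r\leq n+1$, which pins everything to the one-dimensional case, where it becomes the familiar locally-modular-versus-field dichotomy; making the $n=1$ endgame fully self-contained (in particular avoiding any reliance on later field-reconstruction machinery) is the delicate point.
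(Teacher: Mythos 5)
Your high-level strategy is essentially the paper's: invoke Proposition~\ref{prop0} to obtain the germ $F(x,y,z)=(x-b)y+z$ on $J\times S$ with values in $Y$, use the ambient additive structure of $\Gamma$ (power-bounded case, via Fact~\ref{factt}(2)) or of $K/O$ (via Corollary~\ref{K on O- functions}) to decompose this germ as an additive map plus a constant, and then play this rigidity against the fact that in a field the equation $(x-b)y_1+z_1=(x-b)y_2+z_2$ has at most one solution in $x$ once $\langle y_1,z_1\rangle\neq\langle y_2,z_2\rangle$. The paper exploits the contrapositive of your injectivity observation: by a dimension count on the \emph{target} $Y$, for a fixed $x_0$ the map $\langle y,z\rangle\mapsto F(x_0,y,z)$ from a $2n$-dimensional set into the $n$-dimensional $Y$ cannot be injective; one then reads off from the decomposition $F=L+e$ that the resulting coincidence persists for a whole interval of $x$ (the relevant additive map $T(x)=L(x,y_1,z_1)\ominus L(x,y_2,z_2)$ has convex kernel, so is locally constant), contradicting uniqueness of $x$.

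The genuine gap in your version is the dimension bound. You correctly derive that $c\colon V'\to\Gamma^r$ is injective, but you then measure the image against the ambient $\Gamma^r$, giving only $2n\le r$. Since $c(v)=\tilde G(x,v)\ominus L(x)$ for any fixed $x$, the image $c(V')$ is a translate of $\tilde G(x,V')\subseteq Y$, whose dimension is at most $n$. So the right bound is $2n\le n$, i.e.\ $n\le 0$, an immediate contradiction; the re-embedding $r\le n+1$ and the whole $n=1$ endgame become unnecessary. That endgame is also where your argument loses rigour: the appeal to the group configuration theorem is not available off the shelf in the weakly o-minimal context, you give no details on how to extract the configuration from the two-parameter family of maps, and the assertion that ``no infinite field is definable in $K/O$'' is stated without proof and reads as circular --- it is, as the paper remarks after Proposition~\ref{k and K/O}, a consequence of Proposition~\ref{Not K/O} itself (it can in fact be extracted directly from Corollary~\ref{K on O- functions}, but that derivation is not supplied). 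Finally, a minor omission: your piecewise-affineness for $\Gamma$ relies on Fact~\ref{factt}(2), which requires $T$ power bounded, whereas the paper's proof must also cover exponential $T$ (the section declares that once $I$ is produced, power boundedness is dropped); the paper handles this by invoking Lemma~\ref{exp} to identify $\Gamma$ with $K/O$ in the exponential case.
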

	
\proof  Assume for a contradiction that $I$ is either a subset of $K/O$ or of $\Gamma$, and denote these by $V$. In both cases $V$ has an underlying ordered group structures (which is in fact an ordered  $\mathbb Q$-vector space). In order to distinguish it from the additive structure of the field we denote here the additive structure on $V$ by $\la V;\oplus\ra$. We still use $+,\cdot$ to denote the operations of $\CF$.

 Let $\la b,c,d\ra \in I\times Y^2$ be such that $\dim(b,c,d/\0)=2n+1$ We fix $J\sub I$ and $S\sub Y^2$ as provided by Proposition \ref{prop0}. For simplicity we assume that all these sets are definable over $\0$. For $\la x,y,z \ra \in J\times S$, we let $F(x,y,z)=(x-b)y+z$. Our choice of $S$ and $J$ assures that $F$ takes values in $Y$.

Using cell decomposition in either $K/O$ or $\Gamma$ we may assume that $S$ is an  open subset of $I^s$. \\

\noindent\textbf{Claim.} There exists an additive (with respect to $\oplus$) function $L:V^{1+2n}\to V^s$ and $e\in V$, such that $F(x,y,z)=L(x,y,z)+e$ on a definable subset of $J\times S$ of the same dimension. \\
\proof The claim follows from Corollary \ref{K on O- functions} if $V$ is $K/O$. If $T$ is exponential then by Lemma \ref{exp}, $\Gamma$ is definably isomorphic to $K/O$, so the claim  holds for $\Gamma$ as well. If $T$ is power bounded then by Fact \ref{factt}, $\Gamma$ with its induced structure is an o-minimal vector space over the field of definable exponents. In that case the result follows from quantifier elimination for ordered vector spaces. \qed$_{\text{Claim}}$ \\


For simplicity of notation we still denote the set provided by the last claim as  $J\times S$.

By the definition of the function $F$ in the field $\CF$, for every $\la y_1,z_1\ra \neq \la y_2,z_2\ra $, there is at most one $x$ such that $F(x,y_1,z_1)=F(x,y_2,z_2)$.

Fix $x_0\in J\sub V$ such that $x_0\neq 0_V$.  The map $\la y,z\ra \mapsto F(x_0,y,z)$ sends $Z_1\times Z_2$ into $Y$. Since $\dim(Z_1\times Z_2)=2n>n$, it follows that the map cannot be injective and hence there are $\la y_1,z_1\ra\neq \la y_2,z_2\ra$ for which $F(x_0,y_1,z_1)=F(x_0,y_2,z_2)$.

It follows that  $L(x_0,y_1,z_1)=L(x_0,y_2,z_2)$, hence $L(x_0,y_1,z_1)\ominus L(x_0,y_2,z_2)=0$. The function $T: x\mapsto L(x,y_1,z_1)\ominus L(x,y_2,z_2):V\to V^s$ is an additive function from $V$ into $V^s$, and $x_0\in \ker(T)$. It follows that $\mathbb Z x_0\sub \ker(T)$, so by weak o-minimality, $\ker(T)$ is a convex subgroup of $(V,\oplus)$. Thus $T$ is locally constant. This implies that $F(x,y_1,z_1)=F(x,y_2,z_2)$ for infinitely many $x$, a contradiction.\qed \\

From now on we assume that $I\subseteq K$ or $I\subseteq k$.

\section{Fields interpretable in $T_{conv}$}

We fix  $Y$  a definable subset of $\CF$ strongly internal to $I$, a subset of $K$ or $k$,  and of maximal $\CI$-dimension.  As in the previous section, we identify $Y$ with a subset of $I^s$, and equip it with the topology induced from the weakly o-minimal topology of $I^s$. For simplicity we assume that $I$ and $Y$ are $\0$-definable. As before, the argument below does not require $T$ to be power bounded.

\subsection{Infinitesimal neighborhoods}

We introduce the notion of infinitesimals with respect to the ambient weakly o-minimal structures ($K$, or $k$):

Given $a\in I$, {\em the infinitesimal neighborhood of $a$} (with respect to the ambient weakly o-minimal structure $\CI$) is the partial type over $\CM$ consisting of all $\CM$-definable $\CI$-open sets  containing $a$. Equivalently, it is the collection $\{c<x<d: c<a<d, \,\, c,d\in I\}$. We denote it by $\nu(a)$. For $a=(a_1,\ldots, a_n)\in I^n$, we let $\nu(a)=\nu(a_1)\times\cdots \times \nu(a_n)$.
Note that $\nu(a)$ is a partial type in the language  $<$ (in either $K$ or $k$).

For a definable $Y\sub I^s$, and $a\in Y$, we let $\nu_Y(a)=Y\cap \nu(a)$.
In the case where  $I\sub k$,  $Y$ is definable in the induced o-minimal language on $k$ and therefore the type $\nu_Y(a)$ is given by $\CL_{ind}$-formulas.
As we now note, this is also the case when $I\sub K$:

\begin{remark}\label{mu-RCF}
	Assume that $I\sub K$, $Y\sub I^s$ and $a$ is generic in $Y$. By Lemma \ref{TconvQE}, there is an $\CL$-definable set $T$
and an open box $B\ni a$ such that $B\cap Y$ is $\CL$-definable. Thus, the type $\nu_Y(a)$ is given by $\CL$-formulas.
\end{remark}

Finally, note that if $f:I^r\to I^s$ is an $\CI$-definable function  continuous at  $a\in I^r$,  then $f(\nu(a))\sub \nu(f(a))$.



\subsection{The infinitesimal subgroup of $\la \CF,+\ra$ associated to $Y$}

Throughout  $+$, $-$, $\cdot$ and $(\, )^{-1}$ denote the operations in  $\CF$.

\begin{lemma}\label{infinitesimal} If $d$ is generic in $Y$ then $\nu_Y(d)-d$ is a type definable subgroup of $(\CF,+)$.
	Moreover, for every $d_1,d_2$ both generic in $Y$ over $\0$, we have $\nu_Y(d_1)-d_1=\nu_Y(d_2)-d_2$.
\end{lemma}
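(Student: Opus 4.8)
The plan is to work inside the weakly o-minimal structure $\CI$ and exploit the local affinity of definable functions together with the domination results from Section 2. First I would show that $\nu_Y(d)-d$ is closed under the field-addition operation $+$ of $\CF$. The key point is that on an infinitesimal neighbourhood of a generic point, a $\0$-definable (or small-parameter definable) function of $\CF$, transported through the identification $Y\subseteq I^s$, becomes continuous, hence maps $\nu(a)$ into $\nu(f(a))$. Concretely, fix $d$ generic in $Y$. Given $u,v\in\nu_Y(d)-d$, so $d+u,d+v\in\nu_Y(d)$, I want $d+(u+v)\in\nu_Y(d)$, i.e. $(d+u)+(d+v)-d\in Y$ and lies in $\nu(d)$. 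The map $\la x,y\ra\mapsto x+y-d$ from $Y^2\to\CF$ is $d$-definable; restricted near $\la d,d\ra$ it is continuous in the $I$-topology (here one uses that $\CI$ has, after adding $d$ to the base, a weakly o-minimal theory, so cell decomposition applies and a definable function is continuous off a lower-dimensional set — and $\la d,d\ra$ can be taken generic in $Y^2$ over $d$ by Corollary \ref{dp3} applied with $c_1=d$), and its value at $\la d,d\ra$ is $d$, which lies in the open set $Y\subseteq I^s$. Hence it maps a neighbourhood of $\la d,d\ra$ into $Y$, and by continuity $\nu(d)\times\nu(d)$ into $\nu(d)$; this gives $u+v\in\nu_Y(d)-d$. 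The same argument with $\la x,y\ra\mapsto x-y+d$ (or $2d-x$) handles inverses, and $0\in\nu_Y(d)-d$ is trivial. So $\nu_Y(d)-d$ is a type-definable subgroup of $(\CF,+)$.

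Next I would prove the ``moreover'' clause: independence of the generic point. Let $d_1,d_2$ both be generic in $Y$ over $\0$. By Corollary \ref{dp3} (or a direct genericity argument) I may assume $\dim(d_1d_2/\0)=2n$, so each $d_i$ is generic over the other. The translation map $\la x\ra\mapsto x-d_1+d_2$ is a $d_1d_2$-definable bijection $\CF\to\CF$; restricted to $Y$ near $d_1$ it is an $I$-homeomorphism onto a neighbourhood of $d_2$ in $Y$ (again by continuity at the generic point $d_1$, and surjectivity onto a neighbourhood of $d_2$ because its inverse $x\mapsto x+d_1-d_2$ is continuous at $d_2$). Therefore it maps $\nu_Y(d_1)$ onto $\nu_Y(d_2)$, i.e. $\nu_Y(d_2)=\nu_Y(d_1)-d_1+d_2$, which rearranges to $\nu_Y(d_2)-d_2=\nu_Y(d_1)-d_1$, as desired. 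Here it matters that the translation is by a \emph{field} element $d_2-d_1$, so the relation on $\CF$-sides is literally $+$; the $I$-topological statement is what requires the continuity input.

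The step I expect to be the main obstacle is justifying continuity of these $\CF$-definable maps \emph{in the $I$-topology} at the relevant generic points, together with the surjectivity-onto-a-neighbourhood claims. The subtlety, flagged in the paper's own remarks, is that definable functions in weakly o-minimal structures may be locally constant without being piecewise continuous, so $\dcl$ need not satisfy exchange and ``continuous off a small set'' is not automatic in the naive form — one must use the cell decomposition of \cite[Theorem 4.11]{MacMaSt}, under which each cell is a homeomorphic image of an open set, to locate a genuine open box around the generic point on which the function is continuous, and one must be careful that ``generic in $Y^2$ over the base'' (arranged via Corollary \ref{dp3}) really does land in the good locus. Once continuity at generic points is in hand, the image of $\nu(a)$ lands in $\nu(f(a))$ is exactly the last sentence recorded before the lemma, and the group structure and well-definedness follow formally as above.
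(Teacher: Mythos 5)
Your proposal has two genuine gaps, both of which the paper closes by invoking Proposition \ref{prop0}, which your argument does not use at all.

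First, the claim that ``$\la d,d\ra$ can be taken generic in $Y^2$ over $d$ by Corollary \ref{dp3} applied with $c_1=d$'' is false. Corollary \ref{dp3} with $X=Y$ and $c_1=d$ produces some \emph{new} $b\in Y$ with $\dim(bd/\0)=2n$; it does not say $(d,d)$ is generic. The diagonal point $(d,d)$ has $\dim((d,d)/\0)=n$ and $\dim((d,d)/d)=0$, so it lies in a lower-dimensional (indeed zero-dimensional over $d$) subset of $Y^2$. No cell-decomposition or weak o-minimality argument can give you continuity at $(d,d)$ on genericity grounds, and in fact the target of $(x,y)\mapsto x+y$ restricted to the diagonal is special in $\CF$ and you have no control on it. The same misreading recurs in the ``moreover'' step when you want $x\mapsto x-d_1+d_2$ to be an $I$-homeomorphism near $d_1$: again you are at a point whose image $d_2$ is prescribed, not a point obtained as a generic over a fixed base, and the surjectivity-onto-a-neighbourhood claim has the same problem.

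Second, and more fundamentally, the statement ``$(x,y)\mapsto x+y-d$ is continuous in the $I$-topology and hence sends $\nu(d)\times\nu(d)$ into $\nu(d)$'' is not even well-posed before you know the image lies in a strongly $I$-internal set. The map lands in the interpretable field $\CF$, which carries no $I$-topology; continuity at a generic point of the domain is an assertion about definable functions between sets that live inside (powers of) $I$. It is not automatic that $x+y-d$ lies in $Y$, or even in some $I^t$, for $(x,y)$ near $(d,d)$: $Y$ is not assumed to be a subgroup, and its role is only that it has maximal $\CI$-dimension among strongly $I$-internal subsets of $\CF$. The paper's proof uses exactly that maximality, via Proposition \ref{prop0}, to produce $b_1\in I$ and a $2n$-dimensional definable $S\sub Y^2$ with $(c,d)$ generic in $S$ such that $(b_1-b)y+z\in Y$ on $S$; replacing $Y$ by the translate $Y'=(b_1-b)Y$, one then really does have a definable map $Y'\times Y\to Y$ between strongly $I$-internal sets, with $(c',d)$ genuinely generic in $Y'\times Y$, so $I$-continuity at that point is legitimate. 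The group structure and the independence of $d$ then follow from the injectivity-in-each-coordinate argument and Corollary \ref{dp3}. Your proposal has no substitute for Proposition \ref{prop0}, and without it neither the closure under $+$ nor the well-definedness can be pushed through.
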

\proof  We fix any $b\in I$ and $c\in Y$ such that $\dim(b,c,d)=2n+1$ and apply Proposition \ref{prop0}.
It follows that the $b$-definable set
$$\{\la x,y,z\ra \in I\times Y\times Y:(x-b)y+z\in Y\}$$
has dimension $2n+1$ and contains a set of the form $J\times S$ with $J$ an interval $(b,b')$, $\dim(S)=2\dim Y$
and $\la c,d\ra$ generic in $S$ over $bb'$.

Apply Lemma \ref{dp2} to obtain $b_1\in J$ such that $\dim(b_1, c,d/bb')=2n+1$.
Let $Y'=(b_1-b)Y$ and $c'=(b_1-b)c$. Note that $Y'$ is strongly internal to $I$  and that $c'$ is inter-definable with $c$ over $b_1,b$. Therefore $\dim(c',d/b_1,b)=2n$, i.e.,  $\la c',d\ra $ is generic in $Y'\times Y$ over $b_1b$. Since $\CI$ is weakly o-minimal it follows (see \cite[Theorem 4.8, Theorem 4.11]{MacMaSt}) that  the function $(y,z)\mapsto y+z$, from $Y'\times Y\sub I^{2s}$ into $Y\sub I^s$, is $I$-continuous at $\la c',d\ra $.
In particular, it sends the types $\nu_{Y'}(c')\times \nu_Y(d)$ into $\nu_Y(c'+d)$.

\emph{We now work in an elementary extension of $\CM$ and realize the various infinitesimal types there}.

The function $y+z$ is injective in each coordinate and therefore for every $y\in\nu_{Y'}(c')$ the map $z\mapsto y+z$ induces a bijection between $\nu_Y(d)$ and $\nu_Y(c'+d)$. Similarly,   for every $z\in \nu_Y(d)$, the function $x\mapsto x+z$ induces a bijection of $\nu_Y(c')$ and $\nu_Y(c'+d)$. Finally,  for every $w\in \nu_Y(c'+d)$ the function $y\mapsto w-y$ induces a bijection between $\nu_{Y'}(c')$ and
$\nu_Y(d)$.

This shows that for every $y\in \nu_{Y'}(c')$ and $w\in \nu_Y(c'+d)$ there exists $z\in \nu_{Y}(d)$ such that $w=y+z$. Similarly we obtain that $\nu_{Y'}(c')-c'=\nu_Y(d)-d=\nu_Y(c'+d)-(c'+d)$ and they are all subgroups of $(\CF,+)$. E.g., if $x,y\in \nu(d)-d$ then $x+c'\in \nu_Y(c')$, $y+d\in \nu_Y(d)$ and therefore $(x+y+c'+d)\in \nu_Y(c'+d)$, so that $x+y\in \nu_Y(c'+d)=\nu_Y(d)$. Closure of $\nu_Y(d)$ under inverses is proved similarly.

In particular, the above shows that whenever $\la c,d\ra $ is generic in $Y\times Y$  then $\nu_Y(c)-c=\nu_Y(d)-d$. Since either $Y\subseteq K^s$ or $Y\subseteq k^s$ we know that $Y$ is definable in a weakly o-minimal structure with definable Skolem functions. Therefore, given any  $d_1,d_2\in Y$, each generic in $Y$, we can find, using Corollary \ref{dp3} $c\in Y$ such that both $\la c,d_1\ra $ and $\la c,d_2\ra $ are generic in $Y\times Y$. It follows that $\nu_Y(d_1)-d_1=\nu_Y(c)-c=\nu_Y(d_2)-d_2$. \qed \\

We may thus associate to every strongly $I$-internal $Y$ of maximal dimension a type-definable subgroup $\nu_Y$ of $(\CF,+)$, which is any of $\nu_Y(c)-c$ for $c$ generic in $Y$.
We now conclude:
\begin{lemma}\label{all nu the same} For every $Y_1,Y_2$ strongly internal to $I$ of maximal dimension, we have $\nu_{Y_1}=\nu_{Y_2}$, as partial types in $\CF$.\end{lemma}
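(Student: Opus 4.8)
The goal is to upgrade Lemma~\ref{infinitesimal} from the statement ``$\nu_Y$ is well-defined for a single fixed $Y$'' to ``$\nu_{Y_1}=\nu_{Y_2}$ for all strongly $I$-internal $Y_1,Y_2$ of maximal dimension $n$.'' The natural strategy is to find a single strongly $I$-internal set of maximal dimension that simultaneously ``contains generic copies'' of pieces of both $Y_1$ and $Y_2$, and then transport the equality $\nu_{Y_1}(c)-c=\nu_{Y_2}(c)-c$ through that intermediary exactly as in the last paragraph of the proof of Lemma~\ref{infinitesimal}. First I would take $d_1$ generic in $Y_1$ and $d_2$ generic in $Y_2$; the heart of the matter is to produce $c$ such that simultaneously $\langle c,d_1\rangle$ witnesses $\nu_{Y_1}(c)-c=\nu_{Y_1}(d_1)-d_1$ and $\langle c,d_2\rangle$ witnesses $\nu_{Y_2}(c)-c=\nu_{Y_2}(d_2)-d_2$, with $c$ belonging to (a generic copy related to) both groups.

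The mechanism for this is the ``shear'' construction already used inside the proof of Lemma~\ref{infinitesimal}: for $d$ generic in $Y$ and $b,b_1\in I$ with appropriate genericity, the set $Y':=(b_1-b)Y$ is again strongly $I$-internal of maximal dimension, and addition $Y'\times Y\to\CF$ is $I$-continuous at a generic point, yielding $\nu_{Y'}(c')-c'=\nu_Y(d)-d$ where $c'=(b_1-b)c$. The point is that by choosing the scalar $b_1-b\in I$ suitably we can match up the ``shape'' of $Y_1'$ and of $Y_2'$, or more directly: apply the argument of Lemma~\ref{infinitesimal} twice, once with $Y=Y_1$ producing some generic $c$ with $\nu_{Y_1}(d_1)-d_1=\nu_Y(c)-c$ for an auxiliary $Y$, and once with $Y=Y_2$, and arrange (using Corollary~\ref{dp3} and the abundance of independent generics afforded by saturation) that the same $c$ works. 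Concretely: take $Z$ strongly $I$-internal of maximal dimension (say $Z=Y_1$ itself), pick $c$ generic in $Z$ independent from $d_1$ and from $d_2$; since $Y_1$ has definable Skolem functions we may, via Corollary~\ref{dp3}, choose $c\in Y_1$ so that $\langle c,d_1\rangle$ is generic in $Y_1\times Y_1$, giving $\nu_{Y_1}(c)-c=\nu_{Y_1}(d_1)-d_1$. Now run the Proposition~\ref{prop0} / Lemma~\ref{infinitesimal} argument with the pair $\langle c,d_2\rangle\in Y_1\times Y_2$ (rather than $Y\times Y$): the proof only used that $c$ lives in \emph{some} strongly $I$-internal set of maximal dimension and $d_2$ in \emph{another} such set, so it yields $\nu_{Y_1}(c)-c=\nu_{Y_2}(d_2)-d_2$. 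Chaining: $\nu_{Y_1}(d_1)-d_1=\nu_{Y_1}(c)-c=\nu_{Y_2}(d_2)-d_2$, which by definition is $\nu_{Y_1}=\nu_{Y_2}$.

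I would therefore organize the proof as: (i) observe that the proof of Lemma~\ref{infinitesimal} never used that the two factors are the \emph{same} set --- it used a shear of $Y$ and then $Y$ itself, and the key continuity input (\cite[Theorem 4.8, 4.11]{MacMaSt}) applies to any product of two strongly $I$-internal maximal-dimension sets at a generic point; (ii) hence for \emph{any} strongly $I$-internal maximal-dimension $Y',Y$ and $\langle c',d\rangle$ generic in $Y'\times Y$ one gets $\nu_{Y'}(c')-c'=\nu_{Y}(d)-d$; (iii) given $Y_1,Y_2$ and generic $d_1\in Y_1$, $d_2\in Y_2$, use saturation/Corollary~\ref{dp3} to pick $c$ generic in $Y_1$ over $\{d_1,d_2\}$, so $\langle c,d_1\rangle$ is generic in $Y_1\times Y_1$ and $\langle c,d_2\rangle$ is generic in $Y_1\times Y_2$; (iv) apply (ii) to both pairs and chain the equalities.

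The main obstacle is step (i)--(ii): I need to make sure that the shear-and-add argument of Lemma~\ref{infinitesimal} really does go through verbatim for a \emph{cross} pair $Y_1\times Y_2$ rather than $Y\times Y$. The potential subtlety is Proposition~\ref{prop0}, whose statement is phrased with a single $Y$ and produces $S\subseteq Y^2$; I must check that its proof (which rests on the maximality of $\dim Y$ and on $x\mapsto (x-b)y+z\in Y$ lying in the \emph{target} copy) is insensitive to replacing the \emph{source} copies by $Y_1$ and keeping the target $Y_2$ --- equivalently, I may simply re-derive the needed statement: for $b$ generic in $I$, $c$ generic in $Y_1$, $d$ generic in $Y_2$ with $\dim(b,c,d/\0)=2n+1$, there is an interval $J=(b,b_1)$ and a set $S\subseteq Y_1\times Y_2$ with $\langle c,d\rangle$ generic such that $(x-b)y+z\in\CF$ lands in $Y_2$ for $\langle x,y,z\rangle\in J\times S$. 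Since all of $Y_1,Y_2$ are strongly $I$-internal of the same maximal dimension, the clustering argument (``$\langle b,d\rangle$ is not isolated in $f_c^{-1}(f_c(b,d))$, else $f_c$ is injective on an $(n+1)$-dimensional strongly $I$-internal set, contradicting maximality'') is unchanged. Once this is in hand, the continuity of addition at a generic point of $Y_1'\times Y_2$ and the bijection bookkeeping are literally the argument already written, and the chaining in step (iv) is immediate. If one prefers to avoid re-running Proposition~\ref{prop0}, an alternative is to apply Lemma~\ref{all nu the same}'s predecessor to the \emph{union} $Y_1\cup Y_2$ --- but that need not be strongly $I$-internal, so the cross-pair route seems cleanest.
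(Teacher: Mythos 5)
Your proposal reaches the right conclusion by a genuinely different, and noticeably heavier, route than the paper takes. The paper's proof of Lemma~\ref{all nu the same} is essentially one line: set $Y:=Y_1\sqcup Y_2$, the \emph{disjoint} union. Padding so that $Y_1,Y_2\subseteq I^r$ for a common $r$ and appending a tag coordinate gives a definable injection of $Y$ into $I^{r+1}$, so $Y$ is again strongly $I$-internal, and $\dim Y=\max(\dim Y_1,\dim Y_2)=n$ is still maximal. Since the tag separates the two copies, for $d_i$ generic in $Y_i$ one has $\nu_Y(d_i)=\nu_{Y_i}(d_i)$, and $d_i$ is also generic in $Y$; Lemma~\ref{infinitesimal} applied to $Y$ then yields $\nu_{Y_1}(d_1)-d_1=\nu_Y(d_1)-d_1=\nu_Y(d_2)-d_2=\nu_{Y_2}(d_2)-d_2$.

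You considered this idea and rejected it: your final sentence says the \emph{union} $Y_1\cup Y_2$ need not be strongly $I$-internal. That objection is correct for the literal set-theoretic union inside $\CF$ (there is no reason the two embeddings are compatible), but it does not apply to the disjoint union with a tag coordinate, which is what the paper uses. With that repair, the one-liner goes through, and the entire cross-pair machinery becomes unnecessary.

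That said, your alternative is sound and rests on exactly the observation you isolate in step~(i): the proof of Proposition~\ref{prop0} (the non-isolation of $\langle b,d\rangle$ in $f_c^{-1}(f_c(b,d))$ and the ensuing bookkeeping) and the continuity-and-bijection argument of Lemma~\ref{infinitesimal} never use that the source copy (where $c$ lives) and the target copy (where $d$, $z$ and $(x-b)y+z$ live) are the same set — only that each is strongly $I$-internal of maximal dimension $n$. Re-running the argument for $\langle c,d\rangle$ generic in $Y_1\times Y_2$ gives $\nu_{Y_1'}(c')-c'=\nu_{Y_2}(d)-d$ for the shear $Y_1'=(b_1-b)Y_1$, after which one chains via a common $c$ using Corollary~\ref{dp3} exactly as in the last paragraph of Lemma~\ref{infinitesimal}; your step~(iv) should be phrased at that level so as to track the shear as the paper does. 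What your route buys is that it stays entirely inside $\CF$ and avoids forming an auxiliary abstract set; what it costs is re-verifying a cross version of Proposition~\ref{prop0} and carrying the shear bookkeeping. Given that Lemma~\ref{infinitesimal} is already available as a black box, the disjoint-union argument is clearly the economical choice.
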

\proof Let $Y$ be the disjoint union of $Y_1$ and $Y_2$ (both embedded in some $I^r$). Then $Y$ is strongly internal to $I$ and thus for every generic of $Y_1$ or of $Y_2$ can be viewed as a  generic of $Y$. The result follows from Lemma \ref{infinitesimal}.\qed \\

We let $\nu_{\CI}$ denote the type-definable subgroup $\nu_Y$ of $(\CF,+)$ associated with (any) strongly $I$-internal set $Y\sub F$ of maximal dimension.
We still work with a fixed such $Y$, but now replace it with $Y-a$ for $a$ generic in $Y$, so that $\nu_{\CI}=\nu_Y(0)$.
Because $\nu_Y(0)$ is an additive subgroup of $\CF$, there is a definable $Y'\sub Y$, with $\nu_Y(0)\vdash Y'$, such that $-Y'\sub Y$ and $Y'+Y'\sub Y$.
We replace $Y$ with $Y'\cap -Y'$, so now $-Y=Y$ and $Y+Y$ is still $\CI$-internal.

Our next goal is to show that the field $\CF$ equals to the set $Y\cdot (Y\setminus \{0\})^{-1}$ (thus interpretable in  $\CI$) and that in fact, it can be made  definable in the o-minimal language of either $K$ or $k$ (without the valuation). For that purpose we apply several times ideas from
 Marikova's work,  \cite{Marikova-groups}. We start with a direct analogue of Lemma 2.10 in her article.

\begin{lemma}\label{jana} Assume that $a\in Y$ is generic over the parameters defining $Y$. Then the functions $(x,y,z)\mapsto x-y+z$ and $(x,y,z)\mapsto -x+y-z$
are $\CI$-continuous at $(a,a,a)$. Furthermore, if $I\sub K$ then the functions are  $\CL$-definable  in a neighborhood of $(a,a,a)$.
\end{lemma}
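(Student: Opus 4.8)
\textbf{Proof strategy for Lemma~\ref{jana}.}
The plan is to reduce continuity of the two ternary maps at $(a,a,a)$ to the statement, established in Lemma~\ref{infinitesimal}, that $\nu_Y(a)-a=\nu_{\CI}$ is a \emph{group} under $+$, together with the fact that $Y+Y$ and $-Y$ are $\CI$-internal of maximal dimension. First I would observe that, since $a$ is generic in $Y$ and $\nu_{\CI}$ is an additive subgroup, the sum map $(y,z)\mapsto y+z$ is $\CI$-continuous at $(a,a)$ (this is exactly the infinitesimal-neighborhood statement: $\nu_Y(a)\times\nu_Y(a)\to \nu_{Y+Y}(2a)$, and $\nu_{Y+Y}(2a)-2a=\nu_{\CI}$ by Lemma~\ref{all nu the same}). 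Likewise, by the ``moreover'' part of Lemma~\ref{infinitesimal}, negation $y\mapsto -y$ sends $\nu_Y(a)$ onto $\nu_{-Y}(-a)$ and $\nu_{-Y}(-a)-(-a)=\nu_{\CI}$, so $-y$ is $\CI$-continuous at $a$. Composing, $x-y+z$ maps $\nu_Y(a)^3$ into $\nu_Y(a)$ — here one needs that the target really is $\nu_Y(a)$ and not just $\nu_{Y''}$ for some larger internal set; this follows because $a-a+a=a\in Y$ and $\nu_{Y''}(a)-a=\nu_{\CI}=\nu_Y(a)-a$, so the two infinitesimal types around $a$ agree, and $\nu_Y(a)=\nu_{Y''}(a)$ as subsets of $I^s$ once we intersect with $Y$. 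That gives $\CI$-continuity of $x-y+z$ at $(a,a,a)$, and the argument for $-x+y-z$ is symmetric.

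For the ``furthermore'' clause, suppose $I\subseteq K$. I would invoke Remark~\ref{mu-RCF}: since $a$ is generic in $Y\subseteq I^s\subseteq K^s$, there is an $\CL$-definable set and an open box $B\ni a$ with $B\cap Y$ being $\CL$-definable, and the infinitesimal type $\nu_Y(a)$ is generated by $\CL$-formulas. The map $(x,y,z)\mapsto x-y+z$ is already $\CL$-definable as a function $K^{3s}\to K^s$ (it is built from the field operations of $K$, not of $\CF$ — wait, this needs care: $+$ here is the \emph{field} operation of $\CF$, so one must instead argue that, \emph{restricted to $B\cap Y$}, the $\CF$-operation agrees with an $\CL$-definable function). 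Concretely, the graph of $(x,y,z)\mapsto x-y+z$ intersected with $(B\cap Y)^3\times (B'\cap Y)$ for a suitable box $B'\ni a$ is an $\CL_{conv}$-definable set all of whose relevant points are generic in the ambient $K^{4s}$; applying Lemma~\ref{TconvQE} to this set yields an open box on which it is $\CL$-definable, hence the function itself is $\CL$-definable near $(a,a,a)$. The same applies to $-x+y-z$.

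The main obstacle I anticipate is the bookkeeping of \emph{which} internal set each infinitesimal type ``lives on'': continuity of $x-y+z$ at $(a,a,a)$ as a map into $I^s$ is automatic from group-closure of $\nu_{\CI}$, but to land inside $Y$ rather than some auxiliary $Y''$ (such as $Y+Y+Y$) one must use Lemma~\ref{all nu the same} to identify all these infinitesimal subgroups with the single $\nu_{\CI}$, and then use that $Y$ was arranged (just before this lemma) to satisfy $-Y=Y$, $Y+Y\subseteq$ internal set, and $\nu_{\CI}\vdash Y$. A second, more technical point is making the passage from the $\CF$-definable graph to an $\CL$-definable function rigorous via Lemma~\ref{TconvQE}, which requires checking the genericity hypothesis holds for the tuple $(a,a,a,a-a+a)$ in the relevant product — this should follow because $a-a+a=a$ and $a$ is generic in $Y$, so the tuple is generic in $Y^{4}$ hence, being strongly $I$-internal of maximal dimension $n$ in each block, its coordinates do not fall into any lower-dimensional $\CL$-definable trap. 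I would spell that out but expect no real difficulty beyond the indexing.
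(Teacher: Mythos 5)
Your argument for the $\CI$-continuity part is correct and is a genuinely different, cleaner route than the paper's. You exploit directly the group structure of $\nu_{\CI}$ together with Lemma~\ref{all nu the same}: since $x-y+z = a + (u-v+w)$ for $u,v,w\in\nu_{\CI}$, and $\nu_{\CI}$ is a subgroup, the infinitesimal types transport correctly, and continuity at $(a,a,a)$ follows by the usual compactness argument. The paper instead introduces an auxiliary element $c\in Y$ with $(c,a)$ generic in $Y^2$, and writes $x-y+z$ as a composition $\phi_4\circ\phi_3\circ\phi_2\circ\phi_1$ of four simpler maps; the point of the auxiliary $c$ is that each $\phi_i$ is then evaluated at a \emph{generic} point of its domain, so continuity follows from the weakly o-minimal fact that definable functions are continuous at generic points. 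Your version buys a shorter argument for continuity; the paper's decomposition is what does the real work for the second clause.

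The gap is in your ``furthermore'' paragraph. You want to apply Lemma~\ref{TconvQE} to the graph of $(x,y,z)\mapsto x-y+z$ on $Y^3$, and you assert that the tuple $(a,a,a,a-a+a)=(a,a,a,a)$ is ``generic in $Y^4$.'' It is not: $\dim(a,a,a,a/\0)=\dim(a/\0)=n$, while $\dim(Y^4)=4n$ and the graph itself has dimension $3n$. So the hypothesis of Lemma~\ref{TconvQE} fails at exactly the point you need it, and one cannot invoke Fact~\ref{conv-L}(3) (generic coincidence of $\CL_{conv}$-definable functions with $\CL$-definable ones) at the diagonal point $(a,a,a)$. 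This is not a bookkeeping issue; it is the central difficulty the lemma has to overcome, and it is precisely why the paper's proof brings in the extra parameter $c$. Writing $x-y+z$ as $\phi_4\circ\phi_3\circ\phi_2\circ\phi_1$ with $\phi_1(x,y,z)=(c+x,-y,z)$ etc., each $\phi_i$ is a one-variable translation/negation or a binary sum evaluated at a point of full dimension in its domain (e.g.\ $(c+a,-a)$ has dimension $2n$), so Fact~\ref{conv-L}(3) and Lemma~\ref{TconvQE} apply at each stage and $\CL$-definability near $(a,a,a)$ follows by composition. Without such a device, your genericity claim is simply false and the $\CL$-definability conclusion does not follow.
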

\proof Fix $c\in Y$ such that $\la c,a\ra $ is generic in $Y^2$. By our assumptions on $Y$, $c+a,c-a,-a$ and $-c$ are all in $I^s$.
We view the map $x-y+z$ via a composition of $4$ maps, and use repeatedly the weakly o-minimal fact that definable functions are continuous at generic points of their domains and, in the case that $I\sub K$, use  Lemma \ref{conv-L}(4) which says that $\CL_{conv}$-
definable functions coincide with $\CL$-definable functions on neighborhoods of generic points of their domain.
$$\begin{array}{lll}
\phi_1 & : & (x,y,z)\mapsto (c+x,-y,z):Y^3\to Y^3,\\
\phi_2 & : &  (x,y,z)\mapsto (x+y,z):Y^3\to Y^2,\\
\phi_3 & : & (x,y)\mapsto (-c,x+y):Y^2 \to Y\times Y,\\
\phi_4 & : & (x,y)\mapsto x+y : Y\times Y \to Y.
\end{array}
$$
Since $a$ is generic in $Y$,  the function $x\mapsto -x$ is continuous near $a$ and since $\dim(c,a)=2n$, the function $x\mapsto c+x$ is continuous near $a$. Thus, $\phi_1$ is continuous near $(a,a,a)$ and sends $(a,a,a)$ to $(c+a,-a,a)$. We now have $(c+a,-a)$ generic in $Y^2$ thus $\phi_2$ is continuous near $(c+a,-a,a)$ and sends it to $(c,a)$.

We proceed in the same way, and at each step use the fact that we work near a generic point, to conclude that $\phi_i$ is continuous locally.
Finally, $x-y+z=\phi_4\phi_3\phi_2\phi_1(x,y,z)$ is continuous near $(a,a,a)$.

In the case that $I$ is a subset of $K$, the same argument shows that $x-y+z$ and $-x+y-z$ is $\CL$-definable near $(a,a,a)$. Applying this result to the point $(-a,-a,-a)$ we get the same result for $-x+y-z$.
\qed \\

We now proceed with our proof:

\begin{proposition}\label{subring}
\begin{enumerate}
\item  For every nonzero $c\in \CF(\CM)$, the partial types $c\cdot \nu_{\CI}$ and $\nu_{\CI}$ are equal.
\item
	The set of realizations of $\nu_{\CI}$ in an elementary extension is a subring of $\CF$.

\end{enumerate}\end{proposition}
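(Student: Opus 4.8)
\textbf{Proof plan for Proposition \ref{subring}.}

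\medskip

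The plan is to deduce both items from the affineness results of Section 5 (via $\nu_\CI=\nu_Y(0)$) together with Lemma \ref{prop0} and the continuity/definability information collected in Lemma \ref{jana}. For item (1), fix a nonzero $c\in\CF$. The idea is exactly the one already used in the proof of Lemma \ref{infinitesimal}: choose $b\in I$ and $d\in Y$ with $\dim(b,c^{-1}?,\dots)$ large enough (more precisely, pick the data so that Proposition \ref{prop0} applies with this particular $c$ playing the role of the ``slope''), so that there is an interval $J=(b,b')\subseteq I$, a set $S\subseteq Y^2$ of dimension $2n$, and a generic $\la c',d\ra\in S$ (where $c'$ is inter-definable with $c$ over the new parameters and strongly $I$-internal after rescaling) with $(x-b)y+z\in Y$ for all $\la x,y,z\ra\in J\times S$. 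Applying Lemma \ref{dp2} inside $J$ we get $b_1\in J$ with $\dim(b_1,c',d/bb')=2n+1$, so that $\la c',d\ra$ is still generic over $b,b_1$. Then the map $z\mapsto (b_1-b)^{-1}\big((x_0-b)c'+z\big)$ — for a fixed $x_0\in J$ — sends a neighborhood of $d$ into $Y$ and, by weak o-minimality, is $\CI$-continuous at $d$; hence it sends $\nu_Y(d)$ into $\nu_Y(e)$ for the appropriate image point $e$, generic in $Y$. Taking differences and using that $\nu_Y(e)-e=\nu_Y(d)-d=\nu_\CI$ by Lemma \ref{infinitesimal}, we obtain $(b_1-b)^{-1}(c'\cdot x)\in\nu_\CI$ for $x\in\nu_\CI$; since $c'$ ranges (up to an invertible definable rescaling by $(b_1-b)$) over a set inter-definable with $c$, and since $\nu_\CI$ is a group, a short manipulation yields $c\cdot\nu_\CI\subseteq\nu_\CI$, and replacing $c$ by $c^{-1}$ gives equality.

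\medskip

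Given item (1), item (2) is nearly formal. Work in an elementary extension and let $R$ be the set of realizations of $\nu_\CI$. It is an additive subgroup of $\CF$ by Lemma \ref{infinitesimal} (and by our normalization $\nu_\CI=\nu_Y(0)$). It remains to show $R$ is closed under multiplication. Let $x,y\models\nu_\CI$. By item (1), for each fixed nonzero realization $x_0$ of $\nu_\CI$ we have $x_0\cdot\nu_\CI=\nu_\CI$; the point is to make this uniform, i.e. to see that $x\cdot y\models\nu_\CI$ whenever both $x,y$ do, including the degenerate cases where one of them is $0$ (then the product is $0$, which realizes $\nu_\CI$ since $0\in\nu_\CI$). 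For $x\neq 0$: $x\models\nu_\CI$, $y\models\nu_\CI$, and $x\cdot\nu_\CI=\nu_\CI$ as partial types means precisely that $x\cdot y'\models\nu_\CI$ for every $y'\models\nu_\CI$; applying with $y'=y$ gives $xy\models\nu_\CI$. Thus $R$ is closed under multiplication, and being an additive subgroup it is a subring of $\CF$.

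\medskip

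The main obstacle I expect is the bookkeeping in item (1): one must arrange the parameters in Proposition \ref{prop0} so that the ``slope'' appearing there is genuinely the prescribed $c$ (and not an unrelated generic element of $Y$), handle the rescaling by the invertible field element $b_1-b\in\CF$ (which is not in $Y$, so one must be careful that the relevant composite map still lands in $Y$ and is continuous at the relevant generic point), and keep track of which infinitesimal type one lands in so that Lemma \ref{infinitesimal} and Lemma \ref{all nu the same} can be invoked to identify all of them with $\nu_\CI$. The continuity at generic points is automatic from weak o-minimality (\cite[Theorem 4.8, Theorem 4.11]{MacMaSt}) as in Lemma \ref{infinitesimal} and Lemma \ref{jana}, but one should verify that the finitely many affine pieces supplied by the Claim in the proof of Proposition \ref{Not K/O} are not needed here — indeed they are not, since we are now in $K$ or $k$ and the only facts used are injectivity in each coordinate, genericity, and continuity. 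Once the degenerate case $x=0$ is separated off, no further subtlety arises in item (2).
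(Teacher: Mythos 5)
Your proposal takes a genuinely different route from the paper for both items, and in both cases there is a gap that the paper's own (much shorter) argument avoids.

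For item (1), you try to rerun the machinery of Proposition \ref{prop0} with the prescribed $c$ playing the role of the ``slope.'' But Proposition \ref{prop0} requires $\dim(b,c,d/\0)=2n+1$, in particular $c$ must be a \emph{generic} element of $Y$; the statement of Proposition \ref{subring}(1), by contrast, is about an \emph{arbitrary} nonzero $c\in\CF(\CM)$, which need not lie in $Y$ nor be generic over anything. You flag this as bookkeeping, but it is really the whole problem: the approach only yields the conclusion for generic $c$. The paper sidesteps this entirely with a one-line observation: since multiplication by $c$ is a definable bijection of $\CF$, the set $cY$ is again strongly $I$-internal of the same (maximal) dimension, so Lemma \ref{all nu the same} gives $\nu_{cY}=\nu_Y$, and the bijection identifies $\nu_{cY}$ with $c\nu_Y$.

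For item (2), the reduction to (1) is not valid. Item (1) is stated, and is only true, for $c\in\CF(\CM)$. You apply it to $x_0$ a nonzero \emph{realization} of $\nu_{\CI}$, which necessarily lives in a proper elementary extension (the only realization in $\CM$ is $0$). For such $x_0$ the equality $x_0\cdot\nu_{\CI}=\nu_{\CI}$ is in general \emph{false}: already in the trivial case $\CF=K$, $\nu_{\CI}$ is the type of $\CM$-infinitesimals, and multiplying by an infinitesimal $x_0$ strictly shrinks it, so $x_0\cdot\nu_{\CI}\subsetneq\nu_{\CI}$. The one-sided containment $x_0\cdot\nu_{\CI}\subseteq\nu_{\CI}$ \emph{is} what you need, but asserting it is exactly the content of (2), so the argument is circular. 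The paper instead proves closure under multiplication directly: it picks $a$ generic in $Y$ with $a\nu_Y=\nu_Y$, uses compactness to find $Y_1$ with $aY_1\subseteq Y$, chooses $b\in Y_1$ with $\la a,b\ra$ generic in $Y^2$, and then for $x\in\nu_Y(a)$, $y\in\nu_Y(b)$ expands $(x-a)(y-b)=xy-xb-ay+ab$ and uses continuity of multiplication near the generic $\la a,b\ra$ together with Lemma \ref{jana} to land in $\nu_{\CI}$. Your correct observations (additivity from Lemma \ref{infinitesimal}, the $x=0$ degenerate case) do not rescue the multiplicative step.
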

\begin{proof}
To see (1), notice that for every nonzero $c\in \CF$, the set $cY$ is also strongly internal, of the same dimension, hence by Lemma \ref{all nu the same}, $c\nu_Y=\nu_{cY}=\nu_Y$, (as partial types).\\

(2). We first argue in $\CM$ and fix $a$ generic in $Y$. Since $a\nu_Y=\nu_Y$, there is, by compactness, a definable subset $Y_1\sub Y$ such $aY_1\sub Y$.
We fix $b\in Y_1$ such that $\la a,b\ra$ is generic in $Y^2$. Since $\nu_Y(a)-a$ does not depend on the choice of $a$ it suffices to show that  $(\nu_Y(a)-a) (\nu_Y(b)-b) =\nu_Y(ab)-ab$.
	
We now work in an elementary extension of $\CM$, with realizations of the various partial types.
 For any $x\in \nu_Y(a)$ and $y \in \nu_Y(b)$ we have $(x-a)(y-b)= xy-xb-ay+ab$. Since $\la a,b\ra$ is generic in $Y^2$, $\CF$-multiplication is continuous near $\la a,b\ra$, hence $xb, ay, xy\in \nu_Y(ab)$. Since $a,b$ are independent generic also $ab$ is generic in $Y$. Thus by  Lemma \ref{jana}, $xy-xb+ab\in \nu_Y(-ab)$. Thus, $(x-a)(y-b) \in  \nu_Y(-ab)+ab=\nu(\CI)$.
	\end{proof}

We can now conclude the proof of our main theorem:
\begin{theorem}\label{maintext} The field $\CF$ is definably isomorphic to a definable field in $k_{ind}$ or to an $\CL$-definable field in $K$.
 \end{theorem}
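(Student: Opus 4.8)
The plan is to assemble the local pieces built in the previous sections into a global field isomorphism. We have a type-definable subgroup $\nu_{\CI}\le(\CF,+)$ which, by Proposition \ref{subring}, is also closed under multiplication and is stable (as a partial type) under multiplication by any nonzero scalar of $\CF$. We have a strongly $I$-internal set $Y$ of maximal dimension $n$, symmetric ($-Y=Y$), with $\nu_{\CI}\vdash Y$, $Y+Y$ and $aY_1\sub Y$ strongly $I$-internal for suitable definable $Y_1\sub Y$ and generic $a$. The first step is to show, in the spirit of Marikova's argument, that $\CF=Y\cdot(Y\setminus\{0\})^{-1}$: given any $w\in\CF$, pick $b\in Y$ generic; then $wb$ realizes a type whose dimension is still $n$ (multiplication by the fixed scalar $w$ is a definable bijection of $\CF$ preserving $\CI$-dimension), so by Corollary \ref{dp3} we can choose $b$ so that $wb\in Y$ as well, giving $w=(wb)\cdot b^{-1}$. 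Hence $\CF$ is interpretable in $\CI$, i.e. in $k_{ind}$ or in the $K$-induced structure on the interval $I$.

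Next I would upgrade "interpretable in $\CI$" to "definable in $k_{ind}$ (resp. $\CL$-definable in $K$)". The key is that all the relevant local data are already witnessed in the o-minimal language: when $I\sub k$ this is immediate since the induced structure on $k$ is o-minimal (Fact \ref{factt}(1)); when $I\sub K$ we use Remark \ref{mu-RCF} and Lemma \ref{jana}, which say that around generic points the sets $Y$, $Y+Y$, and the maps $(x,y,z)\mapsto x-y+z$, $(x,y)\mapsto xy$ (restricted appropriately) all coincide with $\CL$-definable objects. So one builds a local group chunk for $(\CF,+)$ and a local field chunk for $(\CF,+,\cdot)$ on a neighborhood of a generic point of $Y$, entirely inside the o-minimal reduct $\la K;+,\cdot,<\ra$ (resp. $k_{ind}$). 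Applying Weil-style group-chunk / field-chunk recovery in o-minimal structures (as in \cite{OtPePi} and the o-minimal literature it rests on), this local field chunk generates an o-minimal-definable field $\CF_0$, and the partial identification of $Y$ with $\CF$ via $Y\cdot(Y\setminus\{0\})^{-1}$ promotes to a definable isomorphism $\CF\cong\CF_0$.

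Finally I would invoke the o-minimal classification: by \cite[Theorem 1.1]{OtPePi}, a field definable in an o-minimal structure is real closed or algebraically closed of characteristic zero; combined with the fact that $\CF_0$ is $\CL$-definable in $K$ (resp. definable in $k_{ind}$, which by Fact \ref{factt}(1) is elementarily equivalent, up to change of signature, to a real closed field), this pins $\CF$ down to one of $K$, $K(\sqrt{-1})$, $k$, $k(\sqrt{-1})$ — the content of Theorem \ref{main}. I expect the main obstacle to be the careful bookkeeping in the middle step: namely verifying that the local associativity/distributivity identities needed for the group-chunk and field-chunk theorems genuinely hold on a set of full dimension near a generic point, and that passing between the $\CL_{conv}$-definable picture (where $Y$, $\nu_{\CI}$, and multiplication by scalars live) and the $\CL$-definable picture does not lose the relevant genericity — this is where Lemma \ref{TconvQE}, Fact \ref{conv-L}(4), Lemma \ref{jana}, and the genericity-preservation from Corollary \ref{dp3} all have to be orchestrated simultaneously.
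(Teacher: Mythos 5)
Your first step — establishing $\CF = Y\cdot(Y\setminus\{0\})^{-1}$ — is stated with the wrong justification. Corollary \ref{dp3} is a genericity statement (it produces $b\in X$ with $\dim(bc_i/A)=\dim X+\dim(c_i/A)$); it cannot force $wb$ to land in a \emph{specific} definable set $Y$, and $wb$ is anyway an element of $\CF=X/E$, not a tuple of $M$ to which that corollary applies. What you actually need, and what the paper uses, is Proposition \ref{subring}(1) directly: since $c\cdot\nu_{\CI}=\nu_{\CI}$ as partial types and $\nu_{\CI}\vdash Y$, the set $cY\cap Y$ contains the realizations of $\nu_{\CI}$, hence is infinite, and any nonzero $s\in cY\cap Y$ yields $c=s r^{-1}$ with $s,r\in Y$.

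Your middle step is a genuinely different route from the paper's, and the difference is substantive. You propose to build a local group chunk, then a local field chunk, in the o-minimal reduct and invoke Weil-style recovery. The paper avoids any chunk theorem: it fixes a generic $(a,b)\in Y^2$, observes that $\CF\cong (U\times V^*)/\!\sim$ where $\sim$ identifies $(x_1,y_1)\sim(x_2,y_2)$ iff $(x_1-a)(y_2-b)=(x_2-a)(y_1-b)$, shows via Lemma \ref{jana} and local $\CL$-definability of multiplication near generics that $\sim$ is $\CL$-definable on a suitable $U\times V^*$, applies o-minimal definable choice to get an $\CL$-definable transversal $S$, and then explicitly verifies (again using Lemma \ref{jana}, reduction to common second coordinates via the functions $F_{x,y}$, etc.) that the field operations transported onto $S$ are $\CL$-definable. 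This is more elementary than the chunk approach and sidesteps the issues you yourself flag: there is no need to verify chunk axioms for addition \emph{and} multiplication on a full-dimensional neighborhood, no need to cite a field-chunk theorem (which is not clearly available off the shelf in the o-minimal setting, even granting group-chunk results such as those underlying \cite{Marikova-groups}), and no need to reconcile the group obtained by the chunk theorem with the field structure of $\CF$. Your approach might be made to work, but as written it defers exactly the hard part, whereas the paper carries it out explicitly.

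Finally, note that Theorem \ref{maintext} asserts only that $\CF$ is definably isomorphic to a field definable in $k_{ind}$ or $\CL$-definable in $K$; the classification into $K$, $K(\sqrt{-1})$, $k$, $k(\sqrt{-1})$ via \cite{OtPePi} is the content of the subsequent corollary (the paper's Corollary \ref{final}), not of this theorem. Your proposal conflates the two.
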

\proof Let $\nu_{\CI}=\nu_Y(0)$ be as above. By Proposition \ref{subring}, the partial type $\nu_{\CI}$ is invariant under mutiplication by scalars from $\CF$.  For every $c\in \CF$, the set $c Y\cap Y $ is infinite (as both sets contain $\nu_{\CI}$). Take a nonzero $s\in c Y\cap Y$ and then $s=cr$ for some nonzero $r\in Y$ and so $c=s(r^{-1})$. It follows that  \begin{equation}\label{CF} \CF=\{xy^{-1}:x,y\in Y \, y\neq 0\}=Y (Y\setminus \{0\})^{-1}.\end{equation}
In fact, since $\nu_{Y}(a)-a=\nu_{\CI}$ for every generic $a\in Y$, the above shows that for two generic elements $a,b\in Y$,  we have
$\CF=(Y-a)\cdot ((Y\setminus \{b\})-b)^{-1}$. Moreover, we may replace $Y$ by any relatively open neighborhoods $U,V\sub Y$ of $a$ and $b$ respectively,
and then $\CF=(U-a)(V^*-b)^{-1}$, where $V^*=V\setminus \{b\}$.

Because $Y$ is strongly internal to $I$, the above already implies that  $\CF$ is interpretable in $\CI$.


Our standing assumption is: $I\subseteq K$ or $I\subseteq k$. When $I\sub k$ we can use elimination of imaginaries in the o-minimal structure $k_{ind}$ to deduce that $\CF$ is definable in $k_{ind}$. So we are left with the case $I\sub K$, and we want to prove that $\CF$ is definably isomorphic to an $\CL$-definable field in $K$.

We fix $\la a,b\ra $ generic in $Y^2$ such that $a\cdot b\in Y$. By (\ref{CF}) and the comment right after it,  for every $K$-neighborhoods $U\ni a$ and $V\ni b$, we have $(U-a)(V^*-b)^{-1}=\mathfrak F$. We define on $U\times V^*$ the following equivalence relation: $\la x_1,y_1\ra \sim \la x_2,y_2\ra$ if
$$(x_1-a)(y_1-b)^{-1}=(x_2-a)(y_2-b)^{-1} \,\, \Leftrightarrow (x_1-a)(y_2-b)=(x_2-a)(y_1-b).$$

Note that there is a definable bijection between $U\times V^*/\sim$ and $\CF$, given by $[\la x,y\ra]\mapsto (x-a)(y-b)^{-1}$.

\begin{claim}  There are $\CL$-definable $U\ni a$ and $V\ni b$ such that $\sim$ is $\CL$-definable on $U\times V^*$, and $U\times V^*/\sim$ has an $\CL$-definable set of representatives.\end{claim}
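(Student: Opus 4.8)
The plan is to separate the Claim into its two assertions, dispose of the one about representatives quickly, and concentrate on showing that $\sim$ is $\CL$-definable near the chosen generic point. Concretely, it suffices to produce an open neighbourhood $B$ of $(a,b,a,b)$ in $K^{4s}$ on which the relation $(x_1,y_1)\sim(x_2,y_2)$ is given by an $\CL$-formula: choosing then $\CL(a,b)$-definable open boxes $U\ni a$ and $V\ni b$ with $U\times V\times U\times V\subseteq B$ and setting $V^*=V\setminus\{b\}$, the restriction of $\sim$ to $(U\times V^*)^2$ is $\CL$-definable, which is the first half of the Claim. For the second half, once $U$, $V^*$ and $\sim$ are $\CL$-definable, $(U\times V^*)/\!\sim$ is an imaginary of the pure o-minimal structure $\la K;\CL\ra$; since $T$ is an o-minimal expansion of a real closed field, $\la K;\CL\ra$ has definable Skolem functions (\cite{vdDries}) and hence definable choice, so there is an $\CL$-definable set $R\subseteq U\times V^*$ of representatives for $\sim$. (This is the same device used for $k_{ind}$ elsewhere in the proof.)

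For the remaining point I use three ingredients. First, every $\CF$-operation, restricted to the locus on which it takes values in a strongly $I$-internal set and lifted to $K$ via definable Skolem functions of $\la K;O\ra$, is an $\CL_{conv}$-definable partial function between Cartesian powers of $I\subseteq K$, and hence, by Fact~\ref{conv-L}(3) and Lemma~\ref{TconvQE}, coincides with an $\CL$-definable function on a neighbourhood of any point generic in its domain. Second, by Proposition~\ref{subring} the infinitesimal neighbourhoods multiply and add correctly, i.e. $\nu_Y(a)\cdot\nu_Y(b)\subseteq\nu_Y(ab)$ (this uses $ab\in Y$) and $\nu_Y(\alpha)+\nu_Y(\beta)\subseteq\nu_Y(\alpha+\beta)$ for generics $\alpha,\beta$; thus these restricted operations are already defined on full-dimensional neighbourhoods of the relevant (diagonal, hence non-generic) base points. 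Third, Lemma~\ref{jana}: for $\alpha$ generic in $Y$ the $\CF$-combination $x-y+z$ coincides with an $\CL$-definable function near $(\alpha,\alpha,\alpha)$; more importantly, its proof supplies the technique we need — reassociate a given $\CF$-combination and insert auxiliary generic elements of $Y$ so that each intermediate $\CF$-operation is applied at a genuinely generic point, thereby absorbing the otherwise non-generic points $0$ and $\alpha$.

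Now for Part~1. Using $ab\in Y$, expand in $\CF$
$$(x_1-a)(y_2-b)=x_1y_2-x_1b-ay_2+ab,\qquad (x_2-a)(y_1-b)=x_2y_1-x_2b-ay_1+ab,$$
so that $(x_1,y_1)\sim(x_2,y_2)$ iff $x_1y_2-x_1b-ay_2=x_2y_1-x_2b-ay_1$. Each of the six products $x_1y_2,\,x_1b,\,ay_2,\,x_2y_1,\,x_2b,\,ay_1$ is an $\CF$-multiplication evaluated, for $(x_1,y_1,x_2,y_2)$ near $(a,b,a,b)$, at a point infinitesimally close to the generic point $(a,b)\in Y^2$; by the first two ingredients each therefore coincides, on a common open neighbourhood of $(a,b,a,b)$, with an $\CL$-definable function taking values infinitesimally close to the generic element $ab\in Y$. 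The $\sim$-condition then reads as an equality between two $\CF$-combinations of elements infinitesimally close to the generic element $ab$; rewriting it as a single signed $\CF$-sum equal to $0$ and evaluating that sum as an iterated $\CF$-addition — reassociated, with auxiliary generic elements $c,d,\dots\in Y$ chosen so that $\la a,b,c,d,\dots\ra$ is generic in a power of $Y$ (possible by saturation and Corollary~\ref{dp3}), exactly as in the proof of Lemma~\ref{jana} so that every intermediate addition is applied at a generic point — shows that $\sim$ coincides, on an open box $B\ni(a,b,a,b)$, with an $\CL$-definable set. Together with the reduction above this proves the Claim.

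The only nontrivial step is this last paragraph: the bookkeeping needed to rewrite the defining equation of $\sim$ so that every $\CF$-operation occurring in it is applied at a point generic in its domain. The hypothesis $ab\in Y$ is precisely what makes all six multiplications readable off near the single generic point $(a,b)$; the residual additive combination lives in infinitesimal neighbourhoods of the generic points $0$ and $ab$, and is handled by the reassociation-with-auxiliary-generics device of Lemma~\ref{jana} together with Proposition~\ref{subring} (which guarantees the relevant infinitesimal products and sums remain inside $Y$). Nothing here is conceptually new once Lemma~\ref{jana} and Proposition~\ref{subring} are available; the difficulty is purely organisational.
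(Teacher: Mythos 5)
Your proposal is correct and matches the paper's argument in essence: unravel $\sim$ to the quadratic equation $x_1y_2-ay_2-x_1b=x_2y_1-ay_1-x_2b$, use genericity of $\la a,b\ra$ together with $ab\in Y$ to make the six products $\CL$-definable, invoke Lemma~\ref{jana} for the additive combination, and use definable choice in $\la K;\CL\ra$ for a set of representatives. The only presentational difference is that the paper applies the \emph{statement} of Lemma~\ref{jana} directly --- each side of the unraveled equation is literally $(x,y,z)\mapsto -x+y-z$ evaluated at a triple near $(ab,ab,ab)$, so the condition is an equality of two copies of one $\CL$-definable function and no further reassociation is needed --- whereas you rewrite $\sim$ as a six-term $\CF$-sum equal to $0$ and propose to redo the reassociation-with-auxiliary-generics device from Lemma~\ref{jana}'s proof, which works but is a detour.
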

\proof Unravelling the definition of $\sim$, we obtain:
$$\la x_1,y_1\ra \sim \la x_2,y_2\ra \Leftrightarrow x_1y_2-ay_2-x_1b=x_2y_1-ay_1-x_2b.$$

The function $\la x,y\ra \mapsto xy$ is $\CL$-definable near $\la a,b\ra$ because of genericity. Thus, $y\mapsto ay$ is $\CL$-definable near $b$ and $x\mapsto xb$ is $\CL$-definable near $a$. Finally, by Lemma \ref{jana}, the map $\la x,y,z\ra\mapsto -x+y-z$ is $\CL$-definable near the point $(ab,ab,ab)$. It follows that the function $\la x,y\ra \mapsto xy-ay-xb$ is $\CL$-definable near $\la a,b\ra $, thus $\sim$ is an $\CL$-definable relation near $\la a,b\ra $.

We fix $\CL$-definable relatively open neighbourhoods $ U,V\sub Y$ of $a$ and $b$, respectively, such that the restriction of $\sim$ to $U\times V^*$ is $\CL$-definable. We require further, using Lemma \ref{jana}, that the functions $(x,y,z)\mapsto x- y + z$  and $(x,y,z)\mapsto -x+y-z$ are $\CL$-definable on $U\cdot V$.
Using  definable choice in o-minimal structures we can thus find an $\CL$-definable set of representatives, call it $S$, for $\sim$.\qed\\

{\em We have so far a definable bijection between $\mathfrak F$ and $S$, an $\CL$-definable set in $K^n$. Namely, $\CF$ is definably isomorphic to an $\CL_{conv}$-{\em definable} field (as the field operations might still be definable in $\CL_{conv}$).}\\

Our goal is to show that $\CF$ is definably isomorphic to an $\CL$-definable field.  One approach for doing that is by noticing that the proof of the analogous result, \cite[Theorem 4.2]{BaysPet}, in the case of real closed valued fields, works word-for-word for $T_{conv}$. However, for the sake of completeness we give a different,  self-contained proof  that we can endow $S$ with $\CL$-definable field operations making $\CF$ into a $\CL$-definable field.  We first need: \\

\begin{claim}\label{last} Given $\la x,y\ra \in (U\setminus \{a\})\times (V\setminus\{b\})$, for all $Y$-neighborhoods $U_1\ni a$ and $V_1\ni b $ sufficiently small,
	and for all $a_1\in U_1\setminus \{a\}$ and $b_1\in V_1\setminus \{b\}$ there are unique $x_1\in U_1$ and unique $y_1\in V_1$ such that
	$$\la x,y\ra \sim \la a_1,y_1\ra \sim \la x_1,b_1\ra.$$

Moreover, the two  families of functions $F_{x,y}(b_1)=x_1$ and $G_{x,y}(a_1)=y_1$ are $\CL$-definable, as $\la x,y\ra$ varies.
\end{claim}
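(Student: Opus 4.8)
The plan is to fix $\la x,y\ra\in(U\setminus\{a\})\times(V\setminus\{b\})$ and set $\lambda=(x-a)(y-b)^{-1}\in\CF$, which is the unique element of $\CF$ represented by $\la x,y\ra$ under $\sim$. By definition, $\la a_1,y_1\ra\sim\la x,y\ra$ says exactly $(a_1-a)(y_1-b)^{-1}=\lambda$, i.e.\ $a_1-a=\lambda(y_1-b)$, equivalently $y_1=b+\lambda^{-1}(a_1-a)$; similarly $\la x_1,b_1\ra\sim\la x,y\ra$ forces $x_1=a+\lambda(b_1-b)$. This already shows uniqueness of $x_1$ and $y_1$ (given that they exist), so the content is existence: we must show that for $a_1$ close enough to $a$ the element $b+\lambda^{-1}(a_1-a)$ actually lands in $V_1\sub Y$, and dually for $x_1$. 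First I would use that $Y$ is open in $I^s$ at its generic point $b$ (and at $a$), together with the fact that multiplication by the fixed scalar $\lambda^{-1}$ is, after Lemma \ref{infinitesimal}/Proposition \ref{subring}, an operation under which the infinitesimal type $\nu_{\CI}$ is invariant: the map $z\mapsto b+\lambda^{-1}z$ sends $\nu_{\CI}$ into $\nu_Y(b)$, so it sends $a_1\in\nu(a)$, i.e.\ $a_1-a\in\nu_{\CI}$, into $\nu_Y(b)$, hence into any prescribed $Y$-neighborhood $V_1$ of $b$ once $a_1$ is taken in a small enough neighborhood $U_1$ of $a$. By compactness this ``small enough'' can be taken uniformly, i.e.\ there exist definable $U_1\ni a$, $V_1\ni b$ witnessing the statement; and once we have one such pair, every smaller pair works as well.

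For the definability and uniformity clause, I would simply read off the formulas above: $F_{x,y}(b_1)=x_1$ is the unique solution of $(x-a)(b_1-b)=(x_1-a)(y-b)$, and $G_{x,y}(a_1)=y_1$ is the unique solution of $(x-a)(y_1-b)=(a_1-a)(y-b)$, both of which are $\CL_{conv}$-definable conditions on $\la x,y,x_1,b_1\ra$ (resp.\ $\la x,y,y_1,a_1\ra$) in the field $\CF$. To get $\CL$-definability near the relevant generic points one invokes, exactly as in Lemma \ref{jana} and the preceding claim, that the field operations of $\CF$ and the combinations $\la u,v,w\ra\mapsto -u+v-w$ agree with $\CL$-definable functions on neighborhoods of generic points of their domains — here the points in question are of the form $(a_1,y_1)$ with $a_1$ near $a$ and $y_1$ near $b$, which by the first part of the proof are generic in the appropriate sense once $\la x,y\ra$ is generic; shrinking $U_1,V_1$ further if necessary makes all the relevant functions $\CL$-definable on the corresponding boxes.

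The main obstacle I anticipate is precisely the existence part: a priori $b+\lambda^{-1}(a_1-a)$ is only guaranteed to lie in $I^s$, not in $Y$, and not all of $\CF$'s scalar multiples of a $Y$-neighborhood of $b$ need stay inside $Y$ — only the infinitesimal ones do. So the argument genuinely relies on the group/ring structure of $\nu_{\CI}$ established in Lemma \ref{infinitesimal} and Proposition \ref{subring} (that $\nu_{\CI}$ is a scalar-invariant type-definable additive subgroup), rather than on any naive continuity statement, and one must be careful that $\lambda$ is a \emph{fixed} nonzero element so that $\lambda^{-1}\nu_{\CI}=\nu_{\CI}$ applies. A secondary point to handle with care is ensuring the auxiliary points stay generic so that the passage from $\CL_{conv}$- to $\CL$-definability via Fact \ref{conv-L}(4) and Lemma \ref{jana} is legitimate; this is routine given the first part but should be stated explicitly.
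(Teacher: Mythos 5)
Your proposal is correct and takes essentially the same route as the paper: writing $\lambda=(x-a)(y-b)^{-1}$ where the paper uses $c=\lambda^{-1}$, you obtain the same affine formulas $y_1=b+\lambda^{-1}(a_1-a)$ and $x_1=a+\lambda(b_1-b)$, invoke scalar-invariance of $\nu_{\CI}$ (Proposition \ref{subring}(1)) together with $\nu_Y(b)-b=\nu_{\CI}=\nu_Y(a)-a$ for existence at the infinitesimal level, and pass to definable neighbourhoods by compactness. The $\CL$-definability of the families $F_{x,y}$ and $G_{x,y}$ is then read off, as in the paper, from the $\CL$-definability of $\sim$ near the generic point via Lemma \ref{jana} and Fact \ref{conv-L}.
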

\proof The uniqueness is clear from the definition. Let $c=(x-a)^{-1}(y-b)$. The sets $c(Y-a)$ and $(Y-b)$ both contain $\nu_{\CI}$. Thus, for $a_1\in \nu_{Y}(a)$ there exists $y_1\in \nu_Y(b)$ such that $c(a_1-a)=y_1-b$. It now follows that $(x-a)(y-b)^{-1}=(a_1-a)(y_1-b)^{-1}$, so $\la x,y\ra\sim \la a_1,y_1\ra$.

Similarly, there exists $x_1\in \nu_Y(a)$ such that $\la x,y\ra \sim \la x_1,b_1\ra$.  The existence of $U_1,V_1$ follows by compactness.
Because $S$ is $\CL$-definable, the map  $\la x,y, b_1\ra \mapsto x_1$ and the map $\la x,y,a_1\ra \mapsto y_1$ are $\CL$-definable on their appropriate domains.\qed

We can now show that the field operations, as induced from $\CF$ on the $\CL$-definable set $S$, are $\CL$-definable.
Given $U\ni a$ and $V\ni b$, we let $U^*=U\setminus\{a\}$ and $V^*=V\setminus \{b\}$.
\vspace{.2cm}

\noindent{\bf The definability of addition.}
\vspace{.2cm}

For $\la x_1,y_1\ra,\la x_2,y_2\ra, \la x_3,y_3\ra\in S$, let
$$M_{+} ((x_1,y_1),(x_2,y_2))=\la x_3,y_3\ra \mbox{ if } (x_1-a)(y_1-b)^{-1}+(x_2-a)(y_2-b)^{-1}=(x_3-a)(y_3-b)^{-1}.$$

If $x_1=a$ then $M_{+} ((x_1,y_1),(x_2,y_2))=\la x_2,y_2\ra$. Similarly, $M_{+} ((x_1,y_1),(a,y_2))=\la x_1,y_1\ra$.
We thus assume that $x_1\in U^*$ and $y_1\in V^*$.

By Claim \ref{last}, we can replace $\la x_2,y_2\ra$ with $\la F_{x_2,y_2}(y_1), y_1\ra$ and , $\la x_3,y_3\ra$ with $\la F_{x_3,y_3}(y_1), y_1\ra$, uniformly and $\CL$-definably in the parameters. So we may assume that $y_1=y_2=y_3$. Setting $x_i':=F_{x_i,y_i}(y_1)$ we get
%
%
$$M_{+}((x_1,y_1),(x_2,y_1))=\la x_3,y_1\ra \Leftrightarrow  (x_1'-a)+(x_2'-a)=(x_3-a)\Leftrightarrow x_1'-x_3'+x_2=a.$$
By the choice of $U$,  we may apply Lemma \ref{jana}, to get that this is an $\CL$-definable relation.
\vspace{.2cm}

\noindent{\bf The definability of multiplication.}

\vspace{.2cm}

For $\la x_1,y_1\ra,\la x_2,y_2\ra, \la x_3,y_3\ra\in S$, let
$$M_{\bullet} ((x_1,y_1),(x_2,y_2))=\la x_3,y_3\ra \mbox{ if } (x_1-a)(y_1-b)^{-1}\cdot (x_2-a)(y_2-b)^{-1}=(x_3-a)(y_3-b)^{-1}.$$

To see that $M_{\bullet}$ is $\CL$-definable, we first fix
$a_0\in U^*$ and $b_0\in V^*$ such that $a_0-a=b_0-b$. Using $F$, as above, we can find $x_2'$ such that $\la x_2',y_3\ra \sim \la x_2,y_2\ra$. Thus

$$M_{\bullet} ((x_1,y_1),(x_2,y_2))=\la x_3,y_3\ra \Leftrightarrow (x_1-a)(y_1-b)^{-1}\cdot (x_2'-a)=x_3-a.$$

We can now change, $\CL$-definably,  $\la x_1,y_1\ra$ in a similar way to an equivalent  $\la x_3, y_1'\ra $ so that
$$M_{\bullet} ((x_1,y_1),(x_2,y_2))=\la x_3,y_3\ra \Leftrightarrow (y_1'-b)^{-1}\cdot (x_2'-a)=1.$$

We now use the fact that $(a_0-a)(b_0-b)^{-1}=1$ to conclude:
$$M_{\bullet} ((x_1,y_1),(x_2,y_2))=\la x_3,y_3\ra \Leftrightarrow \la x_2',y_1'\ra\sim \la a_0,b_0\ra.$$

Thus $M_{\bullet}$ is $\CL$-definable, concluding the proof of the theorem. \qed \\

For the sake of clarity we now sum up everything done up until this point to get a complete  proof of Theorem \ref{main}:

\begin{cor}\label{final}
	Let $T$ be an o-minimal power bounded expansion of a field, $T_{conv}$ a $T$-convex expansion of $T$. Any field $\CF$ which is interpretable in $K\models T_{conv}$  is definably isomorphic to one of $K$, $K(\sqrt{-1})$, $k$, or $k(\sqrt{-1})$
\end{cor}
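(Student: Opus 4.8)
The plan is to assemble the corollary purely by citing the main results already established in the excerpt, since Theorem~\ref{maintext} does essentially all the work. First I would recall that by Lemma~\ref{1-dim} we may write $\CF=X/E$ with $X\sub K^m$, and extract an infinite $I_0\sub\CF$ in definable bijection with $J/E'$ for an interval $J\sub K$ and a definable equivalence relation $E'$ on $K$. Applying Theorem~\ref{K/E} (here the hypothesis that $T$ is power bounded is used), after shrinking we obtain an $\CL_{conv}$-definable infinite $I\sub\CF$ which is in definable bijection with an interval in one of the four weakly o-minimal structures $K$, $k$, $\Gamma$, or $K/O$; moreover $\CI:=\CI_\0$ (with parameters added as needed) is weakly o-minimal.

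Next I would invoke Proposition~\ref{Not K/O} to rule out the cases $I\sub\Gamma$ and $I\sub K/O$, leaving $I\sub K$ or $I\sub k$. At this point Theorem~\ref{maintext} applies directly and tells us that $\CF$ is definably isomorphic to a definable field in the o-minimal structure $k_{ind}$, or to an $\CL$-definable field in $K$. In the first case we appeal to Fact~\ref{factt}(1): since $k_{ind}$ is o-minimal and (up to change of signature) elementarily equivalent to a model of $T$, the result of \cite{OtPePi} on fields definable in o-minimal structures forces $\CF$ to be definably isomorphic to $k$ or to $k(\sqrt{-1})$. In the second case, $\CF$ is $\CL$-definable in the o-minimal structure $K\models T$, so again by \cite{OtPePi} it is definably isomorphic to $K$ or $K(\sqrt{-1})$. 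Combining, $\CF$ is definably isomorphic to one of $K$, $K(\sqrt{-1})$, $k$, or $k(\sqrt{-1})$.

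The only genuine subtlety — and the place I would be most careful — is bookkeeping about parameters and the distinction between ``interpretable'', ``definable in $\CI$'', and ``$\CL$-definable in $K$''. Throughout, one adds a small parameter set $A$ and works in $\CI_A$; one must check that ``definably isomorphic'' is preserved when passing back from $\CI_A$ to the original $\CL_{conv}$-structure, and that the identifications $I_0\cong J/E'$, etc., compose correctly with the isomorphism produced by Theorem~\ref{maintext}. Since the excerpt has already done the hard analytic work (the local-affineness of $K/O$-definable functions in Corollary~\ref{K on O- functions}, the infinitesimal subgroup construction in Lemma~\ref{infinitesimal}, and the descent from $\CL_{conv}$ to $\CL$ in Theorem~\ref{maintext}), there is no remaining obstacle of substance: the corollary is a clean concatenation of Lemma~\ref{1-dim}, Theorem~\ref{K/E}, Proposition~\ref{Not K/O}, Theorem~\ref{maintext}, and \cite{OtPePi}.
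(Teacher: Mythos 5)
Your proposal is correct and follows essentially the same route as the paper's own proof: Lemma~\ref{1-dim}, then Theorem~\ref{K/E}, then Proposition~\ref{Not K/O} to eliminate $\Gamma$ and $K/O$, then Theorem~\ref{maintext}, and finally \cite{OtPePi}. The extra remarks you make about parameter bookkeeping are sound but add nothing the paper does not already address implicitly.
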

\begin{proof}
	Fix some interval $J\subseteq K$ and a definable equivalence relation $E$ on $J$ such that $J/E$ is in definable bijection with an infinite subset of $\CF$, as provided by Lemma \ref{1-dim}. Shrink $J$ further to obtain a definable bijection between the weakly o-minimal structure $J/E$ and an infinite interval in $K$, in $k$, in $\Gamma$ or in $K/O$ (Theorem \ref{K/E}). Apply Proposition \ref{Not K/O} to eliminate the two last cases. So we are reduced to the case where $I$ is in definable bijection with either $k$ or $K$ and use Theorem \ref{maintext} to get that $\CF$ definable in either the o-minimal structure $k_{ind}$ or in the o-minimal structure $K$ (restricted to $\CL$). This allows us to use the main result of \cite{OtPePi} to complete the proof.
\end{proof}


\end{document}